\newcommand{\pp}{\mathfrak{p}}
\newcommand{\QQ}{\mathbb{Q}}
\newcommand{\ZZ}{\mathbb{Z}}
\newcommand{\NN}{\mathbb{N}}
\newcommand{\Qp}{\mathbb{Q}_p}
\newcommand{\Zp}{{\mathbb{Z}_p}}
\newcommand{\GL}{\mathrm{GL}}
\newcommand{\Hom}{\mathrm{Hom}}
\newcommand{\End}{\operatorname{End}}
\newcommand{\Spec}{\operatorname{Spec}}
\newcommand{\Ind}{\operatorname{Ind}}
\newcommand{\im}{\operatorname{im}}
\newcommand{\rk}{\operatorname{rk}}
\newcommand{\mcK}{\mathcal{K}}
\newcommand{\mcS}{\mathcal{S}}
\newcommand{\eG}{e\mathcal{G}}
\newcommand{\Oo}{\mathcal{O}}
\newcommand{\soc}{\mathrm{soc}}
\newcommand{\vi}{\vee}
\newcommand{\mm}{\mathfrak{m}}
\newcommand{\Frac}{\mathrm{Frac}}
\newcommand{\ann}{\mathrm{ann}}
\newcommand{\Rep}{\mathrm{Rep}}
\newcommand{\tf}{\mathrm{tf}}
\newcommand{\Fp}{{\mathbb{F}_p}}
\newcommand{\qq}{\mathfrak{q}}
\newcommand{\Aa}{\mathbb{A}}
\newcommand{\sm}{\mathrm{sm}}
\newcommand{\mcR}{\mathcal{R}}
\newcommand{\mfa}{\mathfrak{a}}
\newcommand{\Res}{\mathrm{Res}}
\newcommand{\Ifl}{I_{F_\ell}}
\newcommand{\Pfl}{I_{F_\ell}^{(p)}}
\newcommand{\Got}{ {G_{\overline{\tau}}} }
\newcommand{\Rd}{ {R^\diamond_{\overline{\rho}}  } }
\newcommand{\pr}{\mathrm{pr}}
\newcommand{\Part}{\mathrm{Part}}
\newcommand{\Rsq}{R^\square_{\overline{\rho}}}
\mathchardef\mhyphen="2D 
\newtheorem{thm}{Theorem}[subsection]
\newtheorem{lemma}[thm]{Lemma}
\newtheorem*{lemma*}{Lemma}
\newtheorem{conj}[thm]{Conjecture}
\newtheorem{prop}[thm]{Proposition}
\newtheorem{cor}[thm]{Corollary}
\newtheorem{defn}[thm]{Definition}
\newcommand{\ddelta}[1][-]{{#1}^{(n)}}
\newcommand{\ddeltap}[1][-]{ \left( #1 \right) ^{(n)}}
\author{Tibor Backhausz}
\address{Tibor Backhausz\\University College London}
\email{tib{or.}back{hau}sz.{1}5@ucl.ac.uk}
\title[Change of coefficients in the local Langlands correspondence]{Change of coefficients in the $p \neq \ell$ local Langlands correspondence for $\mathrm{GL}_n$}
\begin{document}
\maketitle
\begin{abstract}
    Let $\ell$ and $p$ be distinct primes, $n$ a positive integer, $F_\ell$ an $\ell$-adic local field of characteristic $0,$ and let $W(k)$ denote the ring of Witt vectors over an algebraically closed field of characteristic $p$. Work of Emerton-Helm, Helm and Helm-Moss defines and constructs a smooth $A[\GL_n(F_\ell)]$-module $\tilde{\pi}(\rho_A)$ for a continuous Galois representation $\rho_A : G_{F_\ell} \to \GL_n(A)$ over a $p$-torsionfree reduced complete local $W(k)$-algebra $A$ interpolating the local Langlands correspondence. However, since $\tilde{\pi}$ is not a functor, there is no clear way to speak about the local Langlands correspondence over non-reduced or finite characteristic $W(k)$-algebras. We describe two natural and reasonable variants of the local Langlands correspondence with arbitrary complete local $W(k)$-algebras as coefficients. They are isomorphic when evaluated on the universal framed deformation of a Galois representation $\overline{\rho}$ over $k$, and more generally we find a surjection in one direction. In many cases, including $n=2$ or $3,$ they both recover $\tilde{\pi}(\rho)$ when $\rho$ has coefficients in a finite extension of $W(k)[p^{-1}].$
    On the Galois side, this requires finding minimal lifts between Galois deformations.
\end{abstract}

\section{Introduction}
We first fix some notation and conventions.
Let $\ell,p$ be distinct prime numbers. 
Let $k$ be an algebraically closed field of characteristic $p.$
We write $W(k)$ for the ring of Witt vectors of $k$, and let $\mcK$ be the category of fields that are algebras over $W(k).$

Let $F_\ell$ be a mixed characteristic local field of residue characteristic $\ell$, and $n \in \NN.$
We work with the group $G_\ell:=\GL_{n}(F_\ell).$

In this paper, we consider smooth representations of an $\ell$-adic group with coefficients over $p$-adic rings -- this means that the role of the primes $\ell$ and $p$ is opposite to most of our references, but matches \cite{eh}, which is our most frequent reference.

We fix throughout a continuous Galois representation $\overline{\rho} : G_{F_\ell} \to \GL_n(k)$.
This $\overline{\rho}$ corresponds to some factor $A_{[L,\pi]}$ of the integral Bernstein centre by \cite{helm2016whittaker} (we recall the precise relationship in section 2).

Let $(R_{\overline{\rho}}^\square,\rho^\square)$ respectively be the universal framed deformation ring attached to $\overline{\rho}$ and the universal framed representation. Note that under our hypotheses, $R_{\overline{\rho}}^\square$ is reduced and flat over $W(k)$ \cite[Corollary 8.3]{helm_curtishom}.

\textbf{Conventions.} We will usually omit saying '$G_\ell$-equivariant' when discussing morphisms between smooth $G_\ell$-representations, including $R[G_\ell]$-modules for any ring $R$. For a ring $R$ and a prime
 ideal $\pp \le R$
corresponding to a point $x \in \Spec R,$ we use the notation
$$\kappa(x)=\kappa(\pp)=\Frac(R/\pp).$$
Furthermore, $\kappa^+(x)=\kappa^+(\pp)$ denotes the integral closure of $R/\pp$ inside $\kappa(x).$ However, in this paper, we only choose to use the $\kappa^+$ notation when $R/\pp$ is a discrete valuation ring, and so $\kappa^+(\pp)=R/\pp$ which we often use implicitly.\\
To improve the appearance and readability of the paper, we will use $\otimes$ with no indication of the ring to mean $\otimes_{R_{\overline{\rho}}^\square}$.

\subsection{Motivation}
Let $R$ be a Noetherian and flat complete reduced local $W(k)$-algebra $R,$ and consider a continuous representation $\rho : G_{F_\ell} \to \GL_n(R),$ or equivalently, a morphism of $W(k)$-algebras $\Rsq \to R$ satisfying $\rho=\rho^\square \otimes R.$ 

For all such rings $R$, the paper \cite{eh} due to Emerton and Helm defined (but did yet not show the existence of) a unique smooth $R[G_\ell]$-module $\tilde{\pi}(\rho),$ which is $R$-torsionfree and matches a form of the local Langlands correspondence after localisation at each minimal prime of $R.$ Since a continuous representation $\rho : G_{F_\ell} \to \GL_n(R)$ can be viewed as the family of representations of the form $\rho \otimes_R \kappa(x)$ for $x \in \Spec R,$ this is called the local Langlands correspondence in families for $\GL_n.$
One motivation for the study of this correspondence is global, in particular it is applicable to completed cohomology, as in \cite{lg} or \cite{sorensen2016local}.

$\tilde{\pi}(\rho)$ satisfies the important technical property of being \emph{co-Whittaker} as defined in \cite{helm2016whittaker}, and recalled in Definition \ref{coWh_defn} below. The structure theory of co-Whittaker modules is relatively simple. In particular, for a complete local $W(k)$-algebra $R,$ the full subcategory of smooth $R[G_\ell]$-modules that are co-Whittaker over some quotient of $R$ can be thought of as a partially ordered set with the relation \emph{admits a surjection to} as the ordering, without forgetting too much information.

The connection of $\tilde{\pi}$ to the more usual variants of the local Langlands correspondence is that $\tilde{\pi}(\rho)$ is the smooth dual of the Breuil-Schneider modification of the local Langlands correspondence if $\rho$ is defined over an algebraically closed field of characteristic $0.$

The module $\tilde{\pi}(\rho^\square \otimes R)$ can behave in subtle ways when changing $R$, i.e. its dependence on the homomorphism $\Rsq \to R$ is not functorial. A recurring pattern when dealing with $\tilde{\pi}$ is that torsion-freeness is not a very stable property, especially over rings containing zero divisors. Moreover, by definition $\tilde{\pi}$ does not distinguish between Galois representations that are isomorphic at all minimal primes of $R.$ This makes it unclear how to extend the construction to non-reduced or non-flat $R.$

\subsection{Two variants}
We describe two different constructions that have some merits to be called a version of the local Langlands correspondence over a much wider class of $W(k)$-algebras. They both assign co-Whittaker representations to a
continuous Galois representations $\rho : G_{F_\ell} \to \GL_n(R)$ where $R$ is a complete local $W(k)$-algebra.

\subsubsection*{The top-down approach.}
An approach originally considered by the authors of \cite{eh} was\footnote{according to personal communication} to designate
$$\tilde{\pi}(\rho^\square) \otimes R$$
as the representation attached to $\rho=\rho^\square \otimes R$ for any $W(k)$-algebra. This relies on the existence of $\tilde{\pi}(\rho^\square)$ which was not proven at the time of \cite{eh}. By now, $\tilde{\pi}(\rho^\square)$ is known to exist due to recent work of Helm and Moss \cite{helmmoss}. It is worthwhile to note that the actual construction of $\tilde{\pi}$ (given the main result of \cite{helmmoss}) is found in \cite{helm2016whittaker}.

An advantage of this approach is that, considering the category of pairs $(R,\alpha : \Rsq \to R)$ where $R$ is a complete $W(k)$-algebra and $\alpha$ is an algebra homomorphism, 
$$(R, \alpha) \mapsto \tilde{\pi}(\rho^\square) \otimes_\alpha R$$ defines a functor to the category $W(k)[G_\ell]$-modules, and the resulting representation is co-Whittaker over $R$ (Lemma \ref{quotientofW_lemma}).

The problem with the approach is that, at present, we cannot show that it recovers the direct definition of the local Langlands correspondence when $A$ is a field. In other words, the following is conjectural in general.
\begin{conj}
\label{the_conjecture}
Let $\pp \in \Spec \Rsq$ be a prime ideal not containing $p$. Then
$$\tilde{\pi}(\rho^\square) \otimes_{\Rsq} \kappa(\pp) \cong \tilde{\pi}(\rho^\square \otimes_{\Rsq} \kappa(\pp))$$
where $\kappa(\pp)$ is the fraction field of $\Rsq/\pp.$
\end{conj}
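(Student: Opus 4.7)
My overall strategy would exploit the classification of co-Whittaker modules over a field: up to isomorphism, such a module is determined by the induced character of the Bernstein centre, and this character in turn picks out the Langlands parameter via the integral local Langlands correspondence of Helm and Moss. Accordingly, the plan is to prove two things, (i) that $\tilde{\pi}(\rho^\square) \otimes_{\Rsq} \kappa(\pp)$ is itself co-Whittaker over $\kappa(\pp)$, and (ii) that the Bernstein centre acts on this module through the same character as on $\tilde{\pi}(\rho^\square \otimes_{\Rsq} \kappa(\pp))$. Together these identify the two sides of the conjecture.

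Part (ii) should be essentially formal: by the Helm--Moss construction, $\tilde{\pi}(\rho^\square)$ carries a $\mathcal{Z}$-action through the canonical $W(k)$-algebra homomorphism $\mathcal{Z} \to \Rsq$ coming from the integral LLC, and reducing modulo $\pp$ produces precisely the Bernstein character attached to the LLC parameter of $\rho^\square \otimes \kappa(\pp)$. So, assuming (i), both modules in the conjecture are co-Whittaker $\kappa(\pp)[G_\ell]$-modules with the same $\mathcal{Z}$-character, and uniqueness forces the required isomorphism.

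The real difficulty is (i): co-Whittakerness is not preserved under arbitrary base change, and a priori we only control $\tilde{\pi}(\rho^\square) \otimes R$ when $R$ is a quotient of $\Rsq$ (as in Lemma \ref{quotientofW_lemma}), not after further localisation at a non-minimal prime. My plan to attack (i) is via a \emph{minimal lift} of $\pp$: a local $W(k)$-algebra homomorphism $\alpha : \Rsq \to \Oo$, with $\Oo$ a complete discrete valuation ring, such that $\ker \alpha$ is a \emph{minimal} prime $\qq$ of $\Rsq$ and the maximal ideal of $\Oo$ pulls back to $\pp$. Given such an $\alpha$, set $M := \tilde{\pi}(\rho^\square) \otimes_{\Rsq} \Oo$. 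Then $M$ is co-Whittaker over $\Oo$; its generic fibre is identified with $\tilde{\pi}(\rho^\square \otimes \Frac \Oo)$ by the interpolation property at the minimal prime $\qq$; and an analysis of the Whittaker model (which is $\Oo$-free of rank one) should force $M$ to be $\Oo$-torsion free, so that its special fibre $M \otimes_{\Oo} \kappa_{\Oo}$ is again co-Whittaker. Since this equals $(\tilde{\pi}(\rho^\square) \otimes \kappa(\pp)) \otimes_{\kappa(\pp)} \kappa_{\Oo}$, faithfully flat descent along the finite extension $\kappa(\pp) \hookrightarrow \kappa_{\Oo}$ would then recover co-Whittakerness of $\tilde{\pi}(\rho^\square) \otimes \kappa(\pp)$ itself.

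The hardest step, foreshadowed by the abstract's remark about \emph{minimal lifts between Galois deformations}, is the existence of the DVR lift $\alpha$: one must realise $\pp$ as the special point of a DVR embedded in $\Spec \Rsq$ whose generic point is a minimal prime. Geometrically this is a question about specialisation along curves inside $\Spec \Rsq$, and such curves do not obviously exist for arbitrary $\pp$. I would therefore expect the argument to go through cleanly only in restricted settings -- for instance when $\pp$ is a closed point of $\Rsq[1/p]$, or in small $n$ where the local structure of $\Rsq$ is sufficiently well understood -- which is consistent with the partial results advertised for $n = 2$ and $3$ in the abstract.
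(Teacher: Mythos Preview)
Your strategy rests on a false premise: co-Whittaker modules over a field are \emph{not} determined by their Bernstein-centre character. Over a field $K$ with a fixed map $A_{[L,\pi]} \to K$, every nonzero quotient of $W_{[L,\pi]} \otimes_{A_{[L,\pi]}} K$ is co-Whittaker (Lemma~\ref{quotientofW_lemma}), and all of these carry the same $\mathcal{Z}$-character; they form a nontrivial chain from $W_{[L,\pi]} \otimes K$ down to its irreducible generic cosocle. The entire content of the conjecture is precisely to locate $\tilde{\pi}(\rho^\square) \otimes \kappa(\pp)$ within this chain, so step~(ii) plus ``uniqueness'' cannot close the argument. Relatedly, you have the difficulty inverted: step~(i) is in fact immediate, since co-Whittakerness \emph{is} preserved under arbitrary base change of Noetherian $W(k)$-algebras (Lemma~\ref{quotientofcowhittaker_lemma}); no DVR lift or torsion-freeness argument is needed for that.

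Note also that the statement is labelled a \emph{conjecture} in the paper and is not proved in general. What the paper establishes is one direction, a surjection $\tilde{\pi}(\rho^\square) \otimes \kappa(\pp) \twoheadrightarrow \tilde{\pi}(\rho^\square \otimes \kappa(\pp))$ (Theorem~\ref{comp_surj_iso_existence_thm}), and this uses ``minimal lifts'' in a different sense from yours: not a DVR whose generic point is a minimal prime of $\Rsq$, but a minimal prime $\mathfrak{a} \subseteq \pp$ such that the monodromy partition of $\rho^\square \otimes \kappa(\mathfrak{a})$ equals that of $\rho^\square \otimes \kappa(\pp)$ (Theorem~\ref{comp_finding_a_min_prime}), which then feeds into \cite[Proposition~6.2.10]{eh}. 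The reverse surjection---equivalently, your uniqueness step---is known only under extra hypotheses (e.g.\ $n \le 3$ via \cite[Conjecture~6.2.7]{eh}, or when $\pp$ lies on a unique component), and remains open otherwise.
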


One of the main results of this paper is one direction of this conjecture, relying on \cite[Proposition 6.2.10]{eh} and the existence of \emph{minimal lifts} on $\Spec \Rsq[p^{-1}]$ shown in Theorem \ref{comp_finding_a_min_prime} which we prove in section 3.
\begin{thm}
\label{coeff_change_thm}
Let $B$ be a flat reduced Noetherian complete local $W(k)$-algebra equipped with a local homomorphism $\Rsq \to B.$
Then there exists a natural $G_\ell$-equivariant surjection $$\tilde{\pi}(\rho^\square) \otimes_{\Rsq} B \to \tilde{\pi}(\rho^\square  \otimes_{\Rsq} B).$$
\end{thm}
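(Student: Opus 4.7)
The strategy is to verify the defining properties of $\tilde{\pi}(\rho^\square \otimes_{\Rsq} B)$ at minimal primes of $B$ for the candidate module $M := \tilde{\pi}(\rho^\square) \otimes_{\Rsq} B$, and then invoke \cite[Proposition 6.2.10]{eh} to produce the surjection. First, by Lemma \ref{quotientofW_lemma} applied to the homomorphism $\Rsq \to B$, the module $M$ is a co-Whittaker $B[G_\ell]$-module, so it is a reasonable candidate to compare against $\tilde{\pi}(\rho^\square \otimes_{\Rsq} B)$ in the poset of co-Whittaker modules.

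The heart of the argument is to show that at each minimal prime $\qq \subset B$, the localisation $M \otimes_B \kappa(\qq)$ realises the local Langlands correspondence for $\rho^\square \otimes_{\Rsq} \kappa(\qq)$. Let $\pp'$ denote the preimage of $\qq$ in $\Rsq$; since $B$ is flat and reduced over $W(k)$, $p$ is a non zero-divisor in $B$, so $p \notin \qq$ and hence $p \notin \pp'$, placing $\pp'$ in $\Spec \Rsq[p^{-1}]$. By Theorem \ref{comp_finding_a_min_prime}, there is a minimal prime $\pp$ of $\Rsq$ with $\pp \subseteq \pp'$ that serves as a minimal lift of $\rho^\square \otimes \kappa(\pp')$. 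Because $\pp$ is a minimal prime of $\Rsq$, the defining property of $\tilde{\pi}(\rho^\square)$ yields
$$\tilde{\pi}(\rho^\square) \otimes_{\Rsq} \kappa(\pp) \cong \tilde{\pi}(\rho^\square \otimes \kappa(\pp)),$$
and the minimal-lift property — together with the known behaviour of $\tilde{\pi}$ under specialisation of Galois representations between characteristic $0$ fields — transports this identification to compute $M \otimes_B \kappa(\qq)$ as the correct member of the local Langlands correspondence for $\rho^\square \otimes \kappa(\qq)$.

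With the matching at minimal primes established, \cite[Proposition 6.2.10]{eh} supplies the canonical $G_\ell$-equivariant surjection $M \twoheadrightarrow \tilde{\pi}(\rho^\square \otimes_{\Rsq} B)$, which is the desired map; naturality in $B$ follows from functoriality of each step. The main obstacle is the middle step: minimal primes of $B$ generally do not pull back to minimal primes of $\Rsq$, so one cannot directly appeal to the minimal-prime characterisation of $\tilde{\pi}(\rho^\square)$. Theorem \ref{comp_finding_a_min_prime} is precisely what closes this gap on the characteristic $0$ locus, while the flatness assumption on $B$ is what keeps the relevant $\pp'$ inside $\Spec \Rsq[p^{-1}]$ where that theorem applies.
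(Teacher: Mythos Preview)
Your overall approach matches the paper's: verify the minimal-lift hypothesis of \cite[Proposition 6.2.10]{eh} using Theorem \ref{comp_finding_a_min_prime}, then apply that proposition to obtain the surjection. The paper's proof is exactly this one sentence, and your unpacking of why each minimal prime $\qq$ of $B$ pulls back into $\Spec \Rsq[p^{-1}]$ (via flatness) and how Theorem \ref{comp_finding_a_min_prime} then supplies the required minimal prime of $\Rsq$ is the correct elaboration.

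However, your description of \emph{what} is being verified is off. You frame the task as showing that $M \otimes_B \kappa(\qq)$ ``realises the local Langlands correspondence for $\rho^\square \otimes \kappa(\qq)$'' and ``is the correct member of the local Langlands correspondence''. Read literally, that would be the isomorphism $\tilde{\pi}(\rho^\square) \otimes \kappa(\pp') \cong \tilde{\pi}(\rho^\square \otimes \kappa(\pp'))$, which is precisely Conjecture \ref{the_conjecture} and is \emph{not} known in general; the paper only establishes a surjection (Theorem \ref{comp_surj_iso_existence_thm}). The hypothesis of \cite[Proposition 6.2.10]{eh} is purely on the Galois side: for each minimal prime of $B$ one must exhibit a minimal prime of $\Rsq$ giving a minimal lift of the corresponding Galois representation. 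Theorem \ref{comp_finding_a_min_prime} delivers exactly this, and the proposition then manufactures the surjection on the automorphic side internally --- you do not need, and cannot in general obtain, an identification of $M \otimes_B \kappa(\qq)$ with $\tilde{\pi}(\rho^\square \otimes \kappa(\qq))$ beforehand. So the argument is right but the middle paragraph should be rewritten to speak only of minimal lifts of Galois representations, not of computing $M$ at minimal primes. (A minor point: the co-Whittaker property of $M$ follows more directly from Lemma \ref{quotientofcowhittaker_lemma} than from Lemma \ref{quotientofW_lemma}.)
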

\begin{proof}
Theorem \ref{comp_finding_a_min_prime} shows that the minimal lift assumptions of \cite[Proposition 6.2.10]{eh} hold for the map $\Rsq \to B.$
\end{proof}
One surjection in Conjecture \ref{the_conjecture} then follows by applying the Theorem to $\Rsq/\pp$ and then passing to the fraction field. Note that the here, the contribution of the present paper is purely on the Galois side, through Theorem \ref{comp_finding_a_min_prime}, which might be of independent interest.

\subsubsection*{The bottom-up approach.}
Based on the semi-simple local Langlands correspondence due to Vignéras \cite{Vigneras2001} for an algebraically closed coefficient field $k$ of characteristic $p$, Emerton and Helm in \cite{eh} defined a slightly modified correspondence $\rho_k \mapsto {\overline{\pi}}(\rho_k)$ for a continuous representation $\rho_k : G_{F_\ell} \to \GL_n(k),$ where $k$ is a finite field of characteristic $p$. Its smooth $k$-dual $\tilde{\overline{\pi}}(\rho_k)$ is the universal co-Whittaker representation over $k$ admitting a surjection to all representations of the form $\rho_\Oo \otimes_\Oo k$ where $\Oo$ is a characteristic $0$ discrete valuation ring with residue field $k$, and $\rho_\Oo : G_{F_\ell} \to \GL_n(k)$ is a continuous representation satisfying $\rho_\Oo \otimes_\Oo k \cong \rho_k.$

We clarify what we mean by the word \emph{universal}.
\begin{defn}
Let $R$ be a $W(k)$-algebra, and let $(M_i)_{i \in I}$ be a collection of $R[G_\ell]$-modules. We say that $M$ is a co-Whittaker cover of $(M_i)_{i \in I},$ if it is a co-Whittaker representation of $G_\ell$ over a quotient of $R,$ and admits a surjection $f_i$ to all representations $M_i.$ It is the universal co-Whittaker cover if any co-Whittaker cover $(M',(f_i')_{i \in I})$ of $(M_i)_{i \in I}$ admits a surjection to $M$ through which the $f_i'$ all factor.
\end{defn}

Let $\mathrm{QDVR}_{k}$ denote the full subcategory of all $W(k)$-algebras which admit a (non-identity) quotient map from a discrete valuation ring with residue field $k$. Any $S \in \mathrm{QDVR}_{k}$ is Artinian, local, and each of its ideals is of the form $(\varpi)^j$ where $j \in \NN$ and $\varpi$ is any element of the maximal ideal $\mm_S$ not contained in $\mm_S^2$ if $S \neq k$ is not a field, or $\varpi=0$ if $S=k.$ We call any such element $\varpi$ a uniformiser of $S,$ as it can be lifted to a uniformiser of any discrete valuation ring admitting a surjection to $S.$

For any $S \in \mathrm{QDVR}_{k},$ let $\mathrm{DVR}_S$ denote the set of isomorphism classes of complete discrete valuation rings $\tilde{S}$ of characteristic $0$ admitting a surjection to $S.$
Analogously to the definition of $\tilde{\overline{\pi}}$ above, we set
\begin{defn}
Let $S \in \mathrm{QDVR}_{k}$ and let $\rho : G_{F_\ell} \to \GL_n(S)$ be a continuous representation.
We define $\tilde{\pi}^{\mathrm{DVR}}(\rho)$
to be the universal co-Whittaker cover of 
$$\left( \tilde{\pi}(\rho_{\tilde{S}}) \otimes_{\tilde{S}} S \right)_{\tilde{S} \in \mathrm{DVR}_S}.$$
\end{defn}
Note that $\tilde{\pi}^{\mathrm{DVR}}(\rho \otimes_R k)=\tilde{\pi}(\overline{\rho}).$

We wish to extend this definition to the class of all Noetherian complete local $W(k)$-algebras $R,$
relying on the following lemma, proved later, which uses \cite[Theorem 6.3]{helm2016whittaker}.
\begin{lemma}
\label{cWh_cover_lemma}
Let $R$ be a Noetherian $W(k)$-algebra, and let $(M_i)_{i \in I}$ be a collection of $R[G_\ell]$-modules, such that each is co-Whittaker over some quotient of $R.$ Assume moreover that the action of the integral Bernstein center on each $M_i$ factor through the same direct summand.
Then there exists a universal co-Whittaker cover of $(M_i)_{i \in I}.$
\end{lemma}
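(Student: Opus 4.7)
The strategy is to realise $M$ as a specific quotient of the universal co-Whittaker module $\tilde\pi(\rho^\square)$, and to use \cite[Theorem 6.3]{helm2016whittaker} to extract the required co-Whittaker structure.

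For each $i \in I$, the freeness of the Whittaker space $\Hom_{N_\ell}(M_i, \psi) \cong R_i$ over the coefficient ring $R_i$ (a quotient of $R$), together with the centrality of the Bernstein action, yields a canonical ring homomorphism $\phi_i : A_{[L,\pi]} \to R_i$. After identifying $A_{[L,\pi]}$ with (the relevant quotient of) $\Rsq$ via Helm-Moss, the universal property of $\tilde\pi(\rho^\square)$ afforded by \cite[Theorem 6.3]{helm2016whittaker} produces a canonical surjection $p_i : \tilde\pi(\rho^\square) \twoheadrightarrow M_i$ of $\Rsq[G_\ell]$-modules. Set $K_i := \ker(p_i)$, $K := \bigcap_{i \in I} K_i$, and $M := \tilde\pi(\rho^\square)/K$. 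By construction, $M$ admits surjections $f_i : M \twoheadrightarrow M_i$ for all $i$; moreover, the assembled map $\tilde\pi(\rho^\square) \to \prod_i M_i$ factors through an injection $M \hookrightarrow \prod_i M_i$, endowing $M$ with an $R[G_\ell]$-module structure for which each $f_i$ is $R$-equivariant.

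A second application of \cite[Theorem 6.3]{helm2016whittaker} to the nonzero quotient $M$ of $\tilde\pi(\rho^\square)$ shows that $M$ is co-Whittaker over $\Rsq/J$, where $J$ is the annihilator in $\Rsq$ of $\Hom_{N_\ell}(M, \psi)$; by the third co-Whittaker axiom applied to each $M_i$, this annihilator equals $\bigcap_i \ker\phi_i$. The main obstacle is to check that $\Rsq/J$ identifies with a quotient of $R$: the Whittaker space of $M$ embeds into $\prod_i R_i$ compatibly with $M \hookrightarrow \prod_i M_i$, and one combines the cyclicity of the Whittaker space over $\Rsq/J$ with the surjectivity of each quotient map $R \twoheadrightarrow R_i$ to see that the inherited $R$-action on this Whittaker space is surjective onto $\Rsq/J$. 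This step crucially uses the hypothesis that each $M_i$ is co-Whittaker over a quotient of $R$ (rather than merely over an arbitrary $W(k)$-algebra) together with the single-Bernstein-summand hypothesis that aligns all the $\phi_i$ inside one ring.

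The universal property is then a formality: any co-Whittaker cover $M'$ of $(M_i)_{i \in I}$ is itself, by the same invocation of Theorem 6.3 applied to the fixed Bernstein summand, a quotient $\tilde\pi(\rho^\square)/K'$ of $\Rsq[G_\ell]$-modules. The compatibility of the surjections $M' \twoheadrightarrow M_i$ with the canonical $p_i$ forces $K' \subseteq K_i$ for all $i$, hence $K' \subseteq K$, yielding the required factorisation $M' \twoheadrightarrow M$ through which each $f'_i$ factors.
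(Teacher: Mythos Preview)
There is a genuine gap. The lemma concerns an arbitrary Noetherian $W(k)$-algebra $R$, with no assumed relationship to $\Rsq$, yet your construction is built on the $\Rsq[G_\ell]$-module $\tilde{\pi}(\rho^\square)$. You invoke \cite[Theorem~6.3]{helm2016whittaker} to produce surjections $p_i: \tilde{\pi}(\rho^\square) \twoheadrightarrow M_i$, but that theorem yields a surjection from $W_{[L,\pi]} \otimes_{A_{[L,\pi]}} S$ when the target is co-Whittaker over $S$; to route this through $\tilde{\pi}(\rho^\square)$ you would need an $A_{[L,\pi]}$-algebra map $\Rsq \to R_i$ for each $i$, and none is given. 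Your attempt to manufacture one by ``identifying $A_{[L,\pi]}$ with the relevant quotient of $\Rsq$ via Helm--Moss'' has the direction reversed: the map $\mathrm{LL}$ of the paper goes $A_{[L,\pi]} \to \Rsq$, exhibiting $\Rsq$ as an $A_{[L,\pi]}$-algebra (indeed formally smooth over it, because of the framing variables), not the other way round. Consequently the surjections $p_i$, the quotient $M = \tilde{\pi}(\rho^\square)/K$, and the subsequent argument matching $\Rsq/J$ with a quotient of $R$ are all unjustified.

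The paper's proof sidesteps $\Rsq$ entirely and works with $W_{[L,\pi]} \otimes_{A_{[L,\pi]}} R$. Lemma~\ref{quotientofW_lemma} supplies surjections $\beta_i : W_{[L,\pi]} \otimes_{A_{[L,\pi]}} R \to M_i$, and one sets $M := (W_{[L,\pi]} \otimes_{A_{[L,\pi]}} R)/\bigcap_i \ker\beta_i$. This $M$ is an $R[G_\ell]$-module from birth, so there is no need to recover an $R$-structure a posteriori from an embedding into $\prod_i M_i$, nor to compare quotients of $\Rsq$ with quotients of $R$. Both the well-definedness of $M$ and the universal property follow from the kernel-independence statement Corollary~\ref{surjkernel_indep_cor}, which plays the role your final paragraph assigns to ``canonicity'' of the $p_i$.
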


This Lemma involves quotient maps between rings, so we need to adjoin a power series variable to obtain quotient maps from non-surjective maps by the following observation for a $W(k)$-algebra $R.$
For $S \in \mathrm{QDVR}_{k},$ any local $W(k)$-algebra homomorphism $\alpha : R \to S$ extends to a map $\beta: R[[u]] \to S$ sending $u$ to a uniformiser. Any $\beta$ of this form is surjective.
\begin{defn}
For a Noetherian complete local $W(k)$-algebra $R$ with residue field $k$ and $\rho : G_{F_\ell} \to \GL_n(R),$ we define
$${\tilde{\Pi}^{\mathrm{DVR},u}}(\rho)$$
to be the universal co-Whittaker cover (over $R[[u]]$) of the collection
$\left( \tilde{\pi}^{\mathrm{DVR}}( {\rho \otimes_{\beta} S} ) \right)_{\beta}$
where $\beta$ ranges over quotient maps $\beta : R[[u]] \to S$ sending $u$ to a uniformiser of some $S \in \mathrm{QDVR}_{k}.$
\end{defn}

To get a co-Whittaker representation over a quotient of $R$ (as opposed to over a quotient of $R[[u]]$), we designate
$$\tilde{\Pi}^{\mathrm{DVR}}(\rho) := \tilde{\Pi}^{\mathrm{DVR},u}(\rho)/ u \tilde{\Pi}^{\mathrm{DVR},u}(\rho)$$
as the bottom-up version of the local Langlands correspondence for $\GL_n.$

The definition is well-behaved when passing to a quotient ring. If $R \to R'$ is a surjective homomorphism of Noetherian complete local $W(k)$-algebras, then for any $S \in \mathrm{QDVR}_{k},$ the set of quotient maps from $R[[u]]$ to $S$ taking $u$ to a uniformiser is (by restriction) naturally a subset of such maps from $R'[[u]] \to S.$ Therefore by the universal co-Whittaker cover property, there is a surjection
$$\tilde{\Pi}^{\mathrm{DVR},u}(\rho) \to \tilde{\Pi}^{\mathrm{DVR},u}(\rho \otimes_R R')$$
in this situation.

In particular, if $\rho$ is defined over some $R \in \mathrm{QDVR}_k$ with a uniformiser $\varpi$ then $\tilde{\Pi}^{\mathrm{DVR}}(\rho)$ is the universal co-Whittaker representation for the collection $$\tilde{\pi}^{\mathrm{DVR}}(\rho \otimes_R k), \tilde{\pi}^{\mathrm{DVR}}(\rho \otimes_R R/(\varpi^2) ), \dots ,\tilde{\pi}^{\mathrm{DVR}}(\rho).$$

\subsection{Comparison of the two approaches.}
Both definitions seem natural, with $\tilde{\pi}(\rho^\square) \otimes -$ being \emph{a priori} better behaved with change of rings, and $\tilde{\Pi}^{\mathrm{DVR}}$ being easier to compute for explicitly given $\rho.$ Therefore understanding their relationship precisely would be a substantial step towards defining a well-behaved local Langlands correspondence with arbitrary complete local $W(k)$-algebras as coefficient rings. Our other main result is a step in this direction.
\begin{thm}
\label{other_main_result_thm}
There exists an isomorphism
$$\tilde{\pi}(\rho^\square) \xrightarrow{\sim} {\tilde{\Pi}^{\mathrm{DVR}}}(\rho^\square),$$
and therefore a surjection
$\tilde{\pi}(\rho^\square) \otimes_{\Rsq} R \to {\tilde{\Pi}^{\mathrm{DVR}}}(\rho^\square \otimes_{\Rsq} R)$
for any quotient ring $R$ of $\Rsq.$
\end{thm}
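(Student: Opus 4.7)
The plan has two parts: first construct a natural surjection $\tilde{\pi}(\rho^\square) \twoheadrightarrow \tilde{\Pi}^{\mathrm{DVR}}(\rho^\square)$ by stacking the two universal properties in play, then upgrade it to an isomorphism by showing that $\tilde{\Pi}^{\mathrm{DVR}}(\rho^\square)$ is a faithful $\Rsq$-module and invoking the rigidity of co-Whittaker modules \cite[Theorem 6.3]{helm2016whittaker}. The surjection for a quotient ring $R$ of $\Rsq$ will then come essentially for free from the isomorphism.

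To construct the surjection, fix a quotient map $\beta : \Rsq[[u]] \to S$ from the definition of $\tilde{\Pi}^{\mathrm{DVR},u}(\rho^\square)$ and set $\alpha := \beta|_{\Rsq}$. For each lift $\tilde\alpha : \Rsq \to \tilde{S}$ of $\alpha$ along a surjection $\tilde{S} \twoheadrightarrow S$ with $\tilde S \in \mathrm{DVR}_S$, Theorem \ref{coeff_change_thm}, applied to the flat, reduced, Noetherian, complete local $\tilde{S}$, yields a surjection $\tilde{\pi}(\rho^\square) \otimes_{\tilde\alpha} \tilde{S} \twoheadrightarrow \tilde{\pi}(\rho_{\tilde{S}})$. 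Base-changing to $S$ over $\tilde{S}$ and varying $\tilde\alpha$ (using Lemma \ref{quotientofW_lemma} for co-Whittaker-ness of the base change) exhibits $\tilde{\pi}(\rho^\square) \otimes_\alpha S$ as a co-Whittaker cover of $(\tilde{\pi}(\rho_{\tilde{S}}) \otimes_{\tilde{S}} S)_{\tilde{S},\tilde\alpha}$; the universal property of $\tilde{\pi}^{\mathrm{DVR}}$ then gives a surjection $\tilde{\pi}(\rho^\square) \otimes_\alpha S \twoheadrightarrow \tilde{\pi}^{\mathrm{DVR}}(\rho^\square \otimes_\beta S)$. Precomposing with $\tilde{\pi}(\rho^\square) \twoheadrightarrow \tilde{\pi}(\rho^\square) \otimes_\alpha S$ and letting $\beta$ vary exhibits $\tilde{\pi}(\rho^\square)$, regarded as an $\Rsq[[u]]$-module via $u \mapsto 0$, as a co-Whittaker cover of the entire family defining $\tilde{\Pi}^{\mathrm{DVR},u}(\rho^\square)$. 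Its universal property produces a surjection $\tilde{\pi}(\rho^\square) \twoheadrightarrow \tilde{\Pi}^{\mathrm{DVR},u}(\rho^\square)$, which factors through $\tilde{\Pi}^{\mathrm{DVR}}(\rho^\square) = \tilde{\Pi}^{\mathrm{DVR},u}(\rho^\square)/u\tilde{\Pi}^{\mathrm{DVR},u}(\rho^\square)$ since $u$ acts as $0$ on $\tilde{\pi}(\rho^\square)$.

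For the isomorphism, I would first observe that $\tilde{\pi}^{\mathrm{DVR}}(\rho_{\tilde{S}}) \cong \tilde{\pi}(\rho_{\tilde{S}})$ for any complete characteristic-zero DVR $\tilde{S}$ with residue field $k$: the module $\tilde{\pi}(\rho_{\tilde{S}})$ itself belongs to the defining collection of $\tilde{\pi}^{\mathrm{DVR}}(\rho_{\tilde{S}})$, producing surjections in both directions whose composite is a surjective endomorphism of a co-Whittaker $\tilde{S}$-module, hence an isomorphism by rigidity. Thus $\tilde{\Pi}^{\mathrm{DVR}}(\rho^\square)$ surjects onto $\tilde{\pi}(\rho_{\tilde{S}})$ through every $\tilde\alpha : \Rsq \to \tilde{S}$, so any $r \in \Rsq$ annihilating $\tilde{\Pi}^{\mathrm{DVR}}(\rho^\square)$ satisfies $\tilde\alpha(r) \cdot \tilde{\pi}(\rho_{\tilde{S}}) = 0$, forcing $\tilde\alpha(r) = 0$ because $\tilde{\pi}(\rho_{\tilde{S}})$ is nonzero and $\tilde{S}$-torsionfree. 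The hard part will be the Galois-side claim that $\bigcap_{\tilde\alpha} \ker\tilde\alpha = 0$: given $r \neq 0$, $W(k)$-flatness of $\Rsq$ keeps $p$ out of every minimal prime, so by reducedness one can pick a minimal prime $\pp \not\ni r$, successively quotient $\Rsq/\pp$ by non-zero-divisors in the maximal ideal not dividing $r$ until the dimension drops to one, and normalise the resulting one-dimensional complete Noetherian local domain to land in a complete characteristic-zero DVR with residue field $k$ in which $r$ is still nonzero. Once faithfulness of $\tilde{\Pi}^{\mathrm{DVR}}(\rho^\square)$ is established, \cite[Theorem 6.3]{helm2016whittaker} forces the surjection $\tilde{\pi}(\rho^\square) \twoheadrightarrow \tilde{\Pi}^{\mathrm{DVR}}(\rho^\square)$ to be an isomorphism. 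Finally, given a quotient ring $R$ of $\Rsq$, the required surjection $\tilde{\pi}(\rho^\square) \otimes_{\Rsq} R \to \tilde{\Pi}^{\mathrm{DVR}}(\rho^\square \otimes_{\Rsq} R)$ is obtained by base-changing the isomorphism along $\Rsq \twoheadrightarrow R$ and composing with the canonical surjection $\tilde{\Pi}^{\mathrm{DVR}}(\rho^\square) \otimes_{\Rsq} R \twoheadrightarrow \tilde{\Pi}^{\mathrm{DVR}}(\rho^\square \otimes_{\Rsq} R)$ noted in the paragraph preceding the theorem.
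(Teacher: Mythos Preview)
Your argument has a genuine gap in the very first step, and it cannot be patched without essentially reverting to the paper's route.

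You view $\tilde{\pi}(\rho^\square)$ as an $\Rsq[[u]]$-module via $u \mapsto 0$ and claim it is a co-Whittaker cover of the family $\bigl(\tilde{\pi}^{\mathrm{DVR}}(\rho^\square \otimes_\beta S)\bigr)_\beta$ over $\Rsq[[u]]$. But the $\Rsq[[u]]$-structure on each $\tilde{\pi}^{\mathrm{DVR}}(\rho^\square \otimes_\beta S)$ is through $\beta$, which sends $u$ to a \emph{uniformiser} of $S$; so $u$ acts nontrivially on the targets whenever $S \neq k$. Your surjections $\tilde{\pi}(\rho^\square) \to \tilde{\pi}^{\mathrm{DVR}}(\rho^\square \otimes_\beta S)$ are only $\Rsq$-linear, not $\Rsq[[u]]$-linear, and therefore do not exhibit $\tilde{\pi}(\rho^\square)$ as a co-Whittaker cover over $\Rsq[[u]]$. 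In fact no $\Rsq[[u]]$-linear surjection $\tilde{\pi}(\rho^\square) \to \tilde{\Pi}^{\mathrm{DVR},u}(\rho^\square)$ can exist: by Propositions~\ref{coWh_identification_prop} and~\ref{interp_bigpi_is_coWh_prop}, $\tilde{\Pi}^{\mathrm{DVR},u}(\rho^\square)$ is co-Whittaker (hence faithful) over $\Rsq[[u]]$, so $u$ does not kill it. For the same reason your ``factors through'' remark is backwards: a map out of a module killed by $u$ would land in $\ker(u\cdot)$, not factor through the quotient by $u$.

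The paper avoids this by working at the $A=\Rsq[[u]]$ level throughout: it compares $\tilde{\Pi}^{\mathrm{DVR},u}(\rho^\square)=\tilde{\Pi}_{\overline{\rho}}$ with $\tilde{\pi}(\rho^\square \otimes A)$, producing surjections in both directions (Proposition~\ref{lower_bound_prop} via the dense set $P_1$, and the maps $\gamma_x$ built from Theorem~\ref{comp_surj_iso_existence_thm} together with Lemma~\ref{comp_surj_iso_implication_lemma}); the composite is then a surjective endomorphism of a co-Whittaker module, hence a unit. Only after this does one mod out by $u$, using Lemma~\ref{comp_sorminta_lemma} (which in turn rests on the torsion-freeness Lemma~\ref{comm_alg_torsionfree_ext_lemma}) to identify $\tilde{\pi}(\rho^\square \otimes A)/u$ with $\tilde{\pi}(\rho^\square)$. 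Your faithfulness argument has the same defect: once you pass to $\tilde{\Pi}^{\mathrm{DVR}}(\rho^\square)=\tilde{\Pi}^{\mathrm{DVR},u}(\rho^\square)/u$, the surjections to $\tilde{\pi}(\rho_{\tilde S})$ collapse to surjections onto their special fibres, so you no longer see enough to deduce faithfulness directly. Finally, even granting faithfulness, invoking \cite[Theorem~6.3]{helm2016whittaker} alone does not force a surjection of co-Whittaker modules to be an isomorphism; the paper instead produces surjections in both directions and uses \cite[Proposition~6.2]{helm2016whittaker}.
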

\begin{proof}
This is the conjunction of Theorem \ref{equality_thm} and Proposition \ref{coWh_identification_prop}
\end{proof}

Given stronger assumptions, we can even show that $\tilde{\Pi}^{\mathrm{DVR}}$ recovers the original definition for the field of fractions of a Krull dimension $1$, characteristic $0$ quotient of $\Rsq.$
\begin{thm}
\label{evenstronger_thm}
Let $\pp \in \Spec \Rsq[p^{-1}]$ be such that $\Rsq/\pp$ is of Krull dimension $1.$
If the surjection $\tilde{\pi}(\rho^\square) \otimes \kappa(\pp) \to \tilde{\pi}(\rho^\square \otimes \kappa(\pp))$ is an isomorphism, then
$$\tilde{\Pi}^{\mathrm{DVR}}(\rho^\square \otimes \Rsq/\pp) \otimes \kappa(\pp) \cong \tilde{\pi}(\rho^\square \otimes \kappa(\pp)).$$
\end{thm}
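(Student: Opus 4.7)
The plan is to derive the theorem from Theorem \ref{other_main_result_thm} combined with an analysis of the generic fibre of the bottom-up construction over the $1$-dimensional base $R := \Rsq/\pp$. Theorem \ref{other_main_result_thm} applied to the surjection $\Rsq \twoheadrightarrow R$, then base-changed to $\kappa(\pp)$ and combined with the hypothesis of the present theorem, produces a surjection
$$\tilde{\pi}(\rho^\square \otimes \kappa(\pp)) \twoheadrightarrow M \otimes \kappa(\pp), \qquad M := \tilde{\Pi}^{\mathrm{DVR}}(\rho^\square \otimes R),$$
reducing the problem to showing that this map is an isomorphism.

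The geometric input for the non-vanishing comes from passing to the normalisation of $R$ inside $\kappa(\pp)$. Since $\Rsq$ is excellent and $R$ is a complete local Noetherian domain of Krull dimension $1$, its normalisation $\tilde{R}$ is finite over $R$; the Henselian property of the complete local domain $R$ forces $\tilde{R}$ to be local, and thus a complete DVR with residue field $k$ (using that $k$ is algebraically closed) and $\Frac \tilde{R} = \kappa(\pp)$. Writing $\varpi$ for a uniformiser of $\tilde R$, the quotients $S_n := \tilde{R}/\varpi^n$ lie in $\mathrm{QDVR}_k$ (witnessed by $\tilde{R}$ itself), and the composition $R \hookrightarrow \tilde{R} \twoheadrightarrow S_n$ extends to a quotient map $\beta_n : R[[u]] \twoheadrightarrow S_n$ sending $u$ to the image of $\varpi$. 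The $\beta_n$ contribute to the collection defining $\tilde{\Pi}^{\mathrm{DVR},u}(\rho^\square \otimes R)$, and since $\tilde{R} \in \mathrm{DVR}_{S_n}$ carries the tautological lift of $\rho^\square \otimes S_n$, the corresponding piece $\tilde{\pi}^{\mathrm{DVR}}(\rho^\square \otimes S_n)$ admits a surjection onto the nonzero co-Whittaker $S_n$-module $\tilde{\pi}(\rho^\square \otimes \tilde{R})/\varpi^n$.

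Chasing these surjections through the universal property of $\tilde{\Pi}^{\mathrm{DVR},u}$ and the subsequent quotient by $u$ should show that the $R$-support of $M$ contains the image of $R$ in every $\tilde{R}/\varpi^n$; using $\varpi$-adic separatedness of $R \hookrightarrow \tilde{R}$, this support must equal $R$, so $M$ is co-Whittaker over $R$ itself rather than over some proper (Artinian) quotient. Since the Whittaker coinvariants of a co-Whittaker $R$-module are free of rank $1$ over $R$, they remain nonzero after base change to $\kappa(\pp)$, and hence $M \otimes \kappa(\pp) \neq 0$.

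Finally, both $\tilde{\pi}(\rho^\square \otimes \kappa(\pp))$ and $M \otimes \kappa(\pp)$ are co-Whittaker modules over the field $\kappa(\pp)$ in the Bernstein component attached to $\overline{\rho}$, and the induced map on the $1$-dimensional Whittaker coinvariants is a surjection of $1$-dimensional $\kappa(\pp)$-spaces, hence an isomorphism. The essentially AIG property of $\tilde{\pi}(\rho^\square \otimes \kappa(\pp))$ on the quotient side then upgrades this to an isomorphism of the full modules. I expect the main technical obstacle to be the non-vanishing step, namely carefully unravelling the universal property of $\tilde{\Pi}^{\mathrm{DVR},u}$ through the quotient by $u$ to pin down the $R$-support of $M$; this is essentially a computation of the support of the bottom-up construction, and the interplay between the auxiliary variable $u$ and the ring $R$ is what makes it subtle.
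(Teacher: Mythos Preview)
Your setup is essentially the paper's: the normalisation $\tilde R$ of $R=\Rsq/\pp$ is exactly the DVR $\kappa^+(\pp')$ that the paper uses (for a suitable $\pp'\in P$ over $\pp$), and the surjections from $\tilde{\Pi}^{\mathrm{DVR},u}(\rho^\square\otimes R)$ onto $\tilde\pi(\rho^\square\otimes\tilde R)/\varpi^n$ are precisely the maps the paper extracts from the universal co-Whittaker cover property. So you have assembled the correct ingredients.

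The gap is in the last paragraph. Having a surjection
\[
f:\;\tilde\pi(\rho^\square\otimes\kappa(\pp))\twoheadrightarrow M\otimes\kappa(\pp)
\]
between two nonzero co-Whittaker $\kappa(\pp)[G_\ell]$-modules, together with the observation that $\ddelta[f]$ is an isomorphism of one-dimensional spaces, does \emph{not} force $f$ to be an isomorphism. Co-Whittaker modules over a field lie in $\eG^\vi$, not in $\eG$; the class $\eG^\vi$ is closed under (nonzero) quotients, so any proper nonzero quotient of $\tilde\pi(\rho^\square\otimes\kappa(\pp))$ is again co-Whittaker with the same $\ddelta$. Concretely, the generic irreducible cosocle of $\tilde\pi(\rho^\square\otimes\kappa(\pp))$ already provides such a proper quotient whenever $\tilde\pi(\rho^\square\otimes\kappa(\pp))$ has length $>1$. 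The implication ``$\ddelta[f]$ iso $\Rightarrow$ $f$ iso'' holds for surjections \emph{out of} objects of $\eG$ (where a nonzero kernel would swallow the generic socle), which is the dual situation; your source is on the wrong side for that argument.

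The fix, and what the paper actually does, is to use your surjections onto the $\tilde\pi(\rho^\square\otimes\tilde R)/\varpi^n$ for more than non-vanishing: they assemble into a surjection
\[
\tilde{\Pi}^{\mathrm{DVR},u}(\rho^\square\otimes R)\twoheadrightarrow \tilde\pi(\rho^\square\otimes\tilde R),
\]
which after passing through $\kappa^+(\pp')$ and inverting the uniformiser yields a surjection
\[
M\otimes\kappa(\pp)\cong \tilde{\Pi}^{\mathrm{DVR},u}(\rho^\square\otimes R)\otimes\kappa(\pp')\twoheadrightarrow \tilde\pi(\rho^\square\otimes\kappa(\pp)).
\]
Now you have surjections in both directions between co-Whittaker $\kappa(\pp)$-modules, and their composite is a surjective endomorphism, hence a unit by \cite[Proposition~6.2]{helm2016whittaker}. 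That is how the isomorphism is obtained; non-vanishing of $M\otimes\kappa(\pp)$ alone is too weak.
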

\begin{proof}[See proof at end of section 4.]
\end{proof}
Note that Theorem \ref{comp_surj_iso_existence_thm} shows that in many cases, the assumption holds, in particular if $n=2$ or $n=3.$

In particular, if $\rho : G_{F_\ell} \to \GL_n(\Oo)$ where $\Oo$ is the ring of integers of some finite extension of $W(k)[p^{-1}],$ then the associated homomorphism of $W(k)$-algebras $\Rsq \to \Oo$ has kernel $\pp \in \Spec \Rsq[p^{-1}]$ with $\Rsq/\pp$ of Krull dimension $1$. If the isomorphism condition holds as well, then we deduce that 
$$\tilde{\Pi}^{\mathrm{DVR}}(\rho)[p^{-1}] \cong \tilde{\pi}(\rho)[p^{-1}],$$
hence in this case both the top-down and bottom-up approaches recover the Emerton-Helm variant of the local Langlands correspondence for $\rho$ of this form.

\subsection{Outline of the paper.} In section 2, we discuss the technical tools needed in the rest of the paper, starting with the necessary commutative algebra to obtain a sufficiently large set of surjections from $\Rsq[[u]]$ to discrete valuation rings in the sense that the set of points of $\Spec \Rsq$ corresponding to their kernels is Zariski dense. We continue by recalling the theory of co-Whittaker modules from \cite{helm2016whittaker}, building up to prove Lemma \ref{cWh_cover_lemma}. We then proceed to relate $\tilde{\Pi}^{\mathrm{DVR}}(\rho^\square)$ to the ring $\Rsq[[u]].$

Section 3 contains all the Galois theory in the present work, and is dedicated to proving Theorem \ref{comp_finding_a_min_prime}. We review the necessary Galois deformation theory, including a decomposition based on $p$-wild inertia due to \cite{CHT}, as well as the language of \emph{pseudo-framed} representations from \cite{helm_curtishom}. The claim we need is similar in spirit to the result of section 2.4.4 of \cite{CHT} saying that the 'minimally ramified' deformation condition is liftable. However, we wish to find minimal lifts to an irreducible component (in the sense of \cite{eh}) of Galois representations that might not be themselves 'minimally ramified' (compared with the fixed residue representation) in the language of \cite{CHT}. We combine these ideas with the well-known correspondence between conjugacy classes of nilpotent matrices and partitions of integers, which induces a relationship between Zariski closure and the dominance order on partitions.

The final section is dedicated to proving the isomorphism in Theorem \ref{other_main_result_thm} and Theorem \ref{evenstronger_thm}. In both cases, we establish surjections in both directions.

\subsection{Related work and further directions}
The (preprint) \cite{disegni2018local} by Disegni gives a geometric treatment of $\tilde{\pi}$ (there denoted simply by $\pi$) with a very slightly different formalism, over $K$-schemes for any characteristic $0$ field $K,$ starting from Weil-Deligne representations instead of Galois representations, and goes on to apply the construction to the theory of $L$-functions.

One reason for distinguishing discrete valuation rings as coefficient rings is because, for a smooth representation of a pro-finite group over a discretely valued field, much is already known about the reduction to the residue field by finding an invariant lattice and reducing it, and taking the semi-simplification (which is independent of the choice of lattice) of the resulting representation. In particular, in the context of inertial types for $\GL_n$, Shotton \cite{shotton} proves the $\ell \neq p$ Breuil--Mézard conjecture for (in our notation) $p>2$, meaning that the reduction in this sense commutes with the reduction of irreducible components of $\Rsq.$ This may be used to determine the modulo $p$ inertial types appearing in $\tilde{\Pi}^{\mathrm{DVR}}$.

\subsubsection*{Acknowledgements.}
The author would like to thank David Helm for many helpful discussions that influenced the development of this paper, most importantly for suggesting that $\tilde{\Pi}_{\overline{\rho}}/u \tilde{\Pi}_{\overline{\rho}}$ might in fact be equal to $\tilde{\pi}(\rho^\square),$ which became a crucial result of this work. I would also like to thank Kevin Buzzard for asking the question which resulted in this paper as well as helpful discussions throughout its development. This work was supported by the Engineering and Physical Sciences Research Council [EP/L015234/1], through the EPSRC Centre for Doctoral Training in Geometry and Number Theory (The London School of Geometry and Number Theory), University College London.

\section{Definitions and first properties}
\subsection{Maps onto discrete valuation rings}
In this subsection we show that we have sufficiently many surjections from $\Rsq[[u]]$ to discrete valuation rings of characteristic $0.$

\begin{lemma}
\label{interp_DVRpts_are_dense_in_Ru}
Let $(R,\mm_R)$ be a complete Noetherian local ring which is flat over $\Zp,$ and assume that $R/\mm_R$ is algebraically closed of characteristic $p$. 
Then the set of primes $\pp$ of $R[[u]]$ such that $R[[u]]/\pp$ is a characteristic $0$ discrete valuation ring uniformised by the image of $u$ is a Zariski dense subset of $\Spec R[[u]].$
\end{lemma}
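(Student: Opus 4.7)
The plan is to show density by constructing, for each non-zero $f \in R[[u]]$, an explicit local surjection $R[[u]] \twoheadrightarrow \Oo$ onto a characteristic-$0$ complete DVR with $u$ mapping to a uniformiser and $f$ not in the kernel; its kernel is then a prime in the set of the statement, avoiding $f$. I would first reduce to $R$ being a complete Noetherian local domain: the nilradical of $R[[u]]$ equals $(\mathrm{nilrad}\,R)R[[u]]$ and lies in every such prime, while $\Spec R[[u]]$ decomposes as $\bigcup_i \Spec(R/\qq_i)[[u]]$ over the minimal primes $\qq_i$ of $R$ (none containing $p$ by $\Zp$-flatness), so density reduces to density on each irreducible component.

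Assume $R$ is a complete Noetherian local domain. By Cohen structure and Noether normalization, $R$ contains $W(k)$ and is module-finite over $A := W(k)[[T_1,\ldots,T_d]]$. Writing $f = \sum f_i u^i$, pick $i_0$ with $f_{i_0} \neq 0$. As $A$ is integrally closed, the minimal polynomial of $f_{i_0}$ over $\Frac A$ has coefficients in $A$ with non-zero constant term $c_n$, giving $f_{i_0}\cdot g = -c_n$ in $R$. Using that $A$ is a UFD, factor $c_n = p^a T_1^{b_1}\cdots T_d^{b_d} c_n'$ with $c_n'(0)\in W(k)^\times$; then for any finite totally ramified $L'/W(k)[1/p]$ with uniformiser $\pi_{L'}$, the surjection $\alpha:A\twoheadrightarrow\Oo_{L'}$ defined by $T_i\mapsto\pi_{L'}$ has $\alpha(c_n)\neq 0$. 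By going-up for the integral extension $A\hookrightarrow R$, pick a prime $\pp$ of $R$ with $\pp\cap A=\ker\alpha$. Then $\Oo_{L'}=A/\ker\alpha\hookrightarrow R/\pp$, and the relation $\bar f_{i_0}\bar g = -\alpha(c_n)$ holding in the domain $R/\pp$ forces $f_{i_0}\notin\pp$. The integral closure of $R/\pp$ in its fraction field is a complete DVR $\Oo_{L''}$ finite over $\Oo_{L'}$ with residue field $k$ (by algebraic closedness), and the composition $\phi:R\to R/\pp\hookrightarrow\Oo_{L''}$ is a local $W(k)$-algebra map with $\phi(f_{i_0})\neq 0$.

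Coefficient-wise extension gives a non-zero $F(u):=\phi(f)\in\Oo_{L''}[[u]]$. Weierstrass preparation in the regular local UFD $\Oo_{L''}[[u]]$ writes $F = \pi_{L''}^s u^t P(u) U(u)$ with $P$ distinguished and $U$ a unit. Enlarging $\Oo_{L''}$ to a finite totally ramified $\Oo_{L'''}$ containing all roots of $P$, one picks a uniformiser $\pi\in\Oo_{L'''}$ avoiding these finitely many roots --- possible because $\mm_{L'''}\setminus\mm_{L'''}^2$ is infinite, as $k$ is algebraically closed and hence infinite. Then $F(\pi)\neq 0$, and the map $R[[u]]\to\Oo_{L'''}$ given by $\phi$ on $R$ and $u\mapsto\pi$ is a local $W(k)$-algebra homomorphism whose image contains $W(k)$ and $\pi$, hence surjects onto $\Oo_{L'''}=W(k)[\pi]$ (total ramification). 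Its kernel is the desired prime.

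I expect the main obstacle to be the construction of $\phi$, which hinges on the normality of $A$ so that minimal-polynomial coefficients lie in $A$, on going-up to provide the prime of $R$ above $\ker\alpha$, and on the UFD structure of $A$ for choosing $\alpha$ with $\alpha(c_n)\neq 0$. The concluding Weierstrass step and the choice of an avoiding uniformiser are routine once $F$ has been shown to be non-zero.
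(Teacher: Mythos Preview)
Your overall strategy --- reduce to a domain, push the problem down to a Noether normalisation $A = W(k)[[T_1,\dots,T_d]]$, then use Weierstrass preparation in the single variable $u$ --- is sound, and the final step (choosing a uniformiser of $\Oo_{L'''}$ avoiding finitely many roots) is correct. But the construction of $\alpha : A \to \Oo_{L'}$ with $\alpha(c_n) \neq 0$ has a genuine gap. The factorisation $c_n = p^a T_1^{b_1}\cdots T_d^{b_d} c_n'$ with $c_n'(0) \in W(k)^\times$ (i.e.\ $c_n'$ a unit in $A$) does not exist in general: the irreducibles of the UFD $W(k)[[T_1,\dots,T_d]]$ are far more numerous than $p, T_1,\dots,T_d$. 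For a concrete failure, take $d=2$ and $c_n = T_1 - T_2$; then your map $T_i \mapsto \pi_{L'}$ kills $c_n$ for \emph{every} choice of $L'$ and uniformiser, so the argument collapses at that point. (Even for $d=1$ and $c_n = p - T_1$, the choice $L' = W(k)[p^{-1}]$ with $\pi_{L'}=p$ already fails, contradicting your ``for any $L'$''.)

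The gap is local and repairable. One fix is to iterate your own Weierstrass step through the variables $T_d, T_{d-1}, \dots, T_1$ in turn: write $c_n = \sum_j c_{n,j}(T_1,\dots,T_{d-1})\,T_d^j$, pick $j_0$ with $c_{n,j_0}\neq 0$, inductively obtain $\alpha':W(k)[[T_1,\dots,T_{d-1}]]\to\Oo'$ with $\alpha'(c_{n,j_0})\neq 0$, and then treat $T_d$ exactly as you later treat $u$. Alternatively, one may simply invoke that $\Spec A[p^{-1}]$ is Jacobson, so its closed points (which, after passing to integral closure, produce exactly such $\alpha$'s) are dense and one of them avoids $V(c_n)$. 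The paper in fact takes this second route at the level of $R$ itself: it shows that primes $\pp_0$ of $R$ with $R/\pp_0$ one-dimensional of characteristic $0$ are Zariski dense (Jacobson-ness of $\Spec R[p^{-1}]$), and then over each such $\pp_0$ produces infinitely many points of $P$ in the fibre $\Spec (R/\pp_0)[[u]]$ by varying the uniformiser, with Weierstrass preparation ensuring density in the fibre. Your element-by-element construction and the paper's fibre-by-fibre density argument ultimately use the same ingredients (normalisation to a DVR, Weierstrass preparation, infinitude of uniformisers over an algebraically closed residue field), just organised differently.
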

\begin{proof}
Denote the set in question by in $P$ and its Zariski closure by $\overline{P}.$
We first establish that the set $P_0$ of $\pp_0 \in \Spec R$ such that $R/\pp_0$ is a characteristic $0$ ring of Krull dimension $1$ is Zariski dense in $\Spec R.$ To see this, note that $\Spec R[p^{-1}]$ is a Jacobson scheme (using \cite[Tag 01P4 and Tag 02IM]{stacks} and noting that $\Spec R[p^{-1}]$ is an open subscheme of $\Spec R \setminus \mm_R$) i.e. its closed points are Zariski dense: its closed points are precisely $P_0$. Our flatness assumption guarantees that $\Spec R[p^{-1}]$ is dense in $\Spec R.$

For any $\pp_0 \in P_0,$ the fibre $\phi^{-1}(\pp_0)$ of $\phi : \Spec R[[u]] \to \Spec R$ at $\pp_0$ is $\Spec (R/\pp_0[[u]]).$ By Weierstrass preparation (using that $R/\pp_0$ is a complete local ring), any infinite subset of $\phi^{-1}(\pp_0)$ is Zariski dense in the whole fibre.

For fixed $\pp_0,$ consider the integral closure $\kappa^+(\pp_0)$ of $R/\pp_0$ in $\kappa(\pp_0),$ this is a complete discrete valuation ring which is finite over $W(R/\mm_R)$ by Lemma \ref{contains_witt_vectors_lemma}.

Since $R/\mm_R$ is assumed to algebraically closed, $\kappa^+(\pp_0)$ is a totally ramified extension of the discrete valuation ring $W(R/\mm_R).$
Therefore any uniformiser $\varpi$ of $\kappa^+(\pp_0)$ generates it as an algebra over $W(R/\mm_R).$ It also generates $\kappa^+(\pp_0)$ over $R$ which must contain $W(R/\mm_R)$ by part (a) of Lemma \ref{contains_witt_vectors_lemma}.

For any such $\varpi,$ there is a unique extension of $R \to \kappa^+(\pp_0)$ to a ring morphism $R[[u]] \to \kappa^+(\pp_0)$ sending $u$ to $\varpi.$ This is surjective by the preceding discussion, and hence its kernel belongs to $P.$ 
Fixing $\pp_0,$ the kernels obtained in this way for uniformisers $\varpi,\varpi'$ are the same if and only if there is an $R$-algebra automorphism of $\kappa^+(\pp_0)$ sending $\varpi$ to $\varpi'.$ This automorphism group is finite by part (b) of Lemma \ref{contains_witt_vectors_lemma}.

Noting that there are infinitely many choices for $\varpi$ as $\kappa^+(\pp_0)^\times$ is infinite, this construction yields infinitely many points of $P$ on the given fibre $\phi^{-1}(\pp_0)$,  hence $\overline{P}$ contains all the fibres above $P_0.$ If $\phi$ denotes the projection $\Spec R[[u]] \to \Spec R,$ $\overline{P}$ therefore intersects all sets $U \subseteq \Spec R[[u]]$ satisfying $\phi(U) \cap P_0 \neq \emptyset$. Noting that $\phi$ is open and $P_0$ is dense shows that $\overline{P} \cap U \neq \emptyset$ if $U$ is open.
\end{proof}

\begin{lemma}
\label{contains_witt_vectors_lemma}
Let $(R,\mm_R)$ be a complete Noetherian local domain which is flat over $\Zp,$ and assume that $R/\mm_R$ is perfect of characteristic $p$. Then 
\begin{enumerate}[label=(\alph*)]
    \item $R$ contains a copy of the ring of Witt vectors $W(R/\mm_R).$
    \item If moreover $R$ is an integral domain and has Krull dimension $1$ then the integral closure of $R$ in $\Frac(R)$ is finite over $W(R/\mm_R).$
\end{enumerate}
\end{lemma}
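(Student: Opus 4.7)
For part (a), the plan is to invoke Cohen's structure theorem. Since $R$ is flat over $\Zp$, it is $p$-torsion-free and hence of characteristic zero. The structure theorem for complete Noetherian local rings of mixed characteristic $(0,p)$ furnishes a coefficient subring $\Lambda \subseteq R$, a complete local subring mapping isomorphically onto $k := R/\mm_R$. When $k$ is perfect and $R$ has characteristic zero, this $\Lambda$ is canonically identified with the ring of Witt vectors $W(k)$, characterised up to isomorphism as the unique $p$-torsion-free complete discrete valuation ring with residue field $k$ and uniformiser $p$. This yields the embedding $W(k) \hookrightarrow R$.

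For part (b), I would proceed as follows. First, complete Noetherian local rings are excellent, hence Nagata, so the integral closure $R^{\mathrm{int}}$ of $R$ in $\Frac(R)$ is a finitely generated $R$-module. Being module-finite over the complete local ring $R$, it is itself complete and semi-local; as a subring of $\Frac(R)$ it is a domain, hence local, so altogether it is a one-dimensional complete normal Noetherian local domain, i.e., a complete DVR of characteristic $0$. Let $k'$ denote its residue field; module-finiteness over $R$ gives $[k' : k] < \infty$, and $k'$ inherits perfectness from $k$.

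Now apply part (a) to $R^{\mathrm{int}}$ to obtain an inclusion $W(k') \hookrightarrow R^{\mathrm{int}}$. Since $R^{\mathrm{int}}$ is a complete DVR of characteristic zero with residue field $k'$, it is a totally ramified extension of $W(k')$ of finite degree $e$ equal to its absolute ramification index, which is finite because $p$ has finite valuation in $R^{\mathrm{int}}$. On the other hand, $W(k')$ is free of rank $[k' : k]$ over $W(k)$ by functoriality of the Witt construction. Combining, $R^{\mathrm{int}}$ is a finite $W(k)$-module of rank $e \cdot [k' : k]$. The only step requiring real justification beyond formal manipulation is the finiteness of normalisation (the $N$-1 property), for which excellence of $R$ is the standard input; everything else is structural.
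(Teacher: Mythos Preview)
Your proof is correct. Part (a) is essentially identical to the paper's argument: both invoke Cohen's structure theorem and identify the coefficient ring with $W(R/\mm_R)$ using perfectness of the residue field and $p$-torsion-freeness of $R$.

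In part (b) the two arguments share the same key input---finiteness of integral closure for a complete Noetherian local domain (you cite excellence/Nagata; the paper cites Swanson--Huneke directly)---but diverge thereafter. The paper shows that $R$ itself is finite over $W(R/\mm_R)$ by a dimension count: flatness over $W(R/\mm_R)$ forces $\dim R/pR = 0$, so $R/pR$ is Artinian with composition factors $R/\mm_R$, and complete Nakayama finishes. Finiteness of $R^{\mathrm{int}}$ over $W(R/\mm_R)$ then follows by transitivity through $R$. You instead bypass $R$ and work directly with the structure of $R^{\mathrm{int}}$ as a complete DVR: you insert the intermediate ring $W(k')$ for $k'$ the residue field of $R^{\mathrm{int}}$, and use the classification of mixed-characteristic complete DVRs (finite totally ramified over their Witt coefficient ring) together with $[k':k]<\infty$. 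Your route is slightly more structural and avoids the Nakayama step, while the paper's avoids introducing $k'$ and part (a) applied to $R^{\mathrm{int}}$. Both are clean; neither is materially shorter.
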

\begin{proof}
(a) By Cohen's structure theorem \cite[Tag 032A]{stacks}, $R$ is of the form
$S[[T_1,\dots,T_m]]/I$ where $S$ is a complete discrete valuation ring with residue field $R/\mm_R$ and uniformiser $p,$ $m \in \NN$ and $I$ is an ideal. We deduce $p \not\in I$ by the $\Zp$-flatness of $R$, and so the restriction $S \to R$ is injective.

Since $R/\mm_R$ is perfect, by the theory of Witt vectors we must have $S \cong W(R/\mm_R)$ \cite[Ch. IX, §4, Proposition 6]{bourbaki_ac}.

(b) Using the finiteness of integral closure for complete Noetherian domains \cite[Theorem 4.3.4]{swansonhuneke}, and by the transitivity of finiteness, it is sufficient to prove that $R$ is finite over $W(R/\mm_R).$ Since $R$ is flat over $\Zp,$ it is also flat over $W(R/\mm_R)$ (which is uniformised by $p$) and therefore we have
$$\dim R/pR = \dim R - \dim W(R/\mm_R) = 0$$
and so $R/pR$ is an Artinian ring. Then it is also of finite length over itself and thus $R.$ In particular, it has a Jordan-H\"older decomposition over $R$ with all components isomorphic to $R/\mm_R,$ which is (trivially) finitely generated over $W(R/\mm_R).$ The finiteness of $R$ over $W(R/\mm_R)$ now follows from Nakayama's lemma for the complete local ring $W(R/\mm_R)$.
\end{proof}

\subsection{The integral Bernstein-Zelevinsky derivative and co-Whittaker representations}
In this subsection we recall some theory we need from \cite{helm2016whittaker} and \cite{eh} to construct universal co-Whittaker covers.

Let $\ddelta$ be the $n$th derivative functor defined in \cite{eh}, taking smooth $W(k)[G_\ell]$-modules to $W(k)$-modules. We recall some of its properties as follows.
\begin{prop}
\label{ddelta_props_prop}
$\ddelta$ satisfies the following:
\begin{enumerate}[label=(D\arabic*)]
\item $\ddelta$ is exact and $W(k)$-linear
\item $\ddeltap[M \otimes_{W(k)} N]=\ddelta[M] \otimes_{W(k)} N$ for all smooth $W(k)[G_\ell]$-modules $N$ and $M,$
\item $\ddelta$ admits a left adjoint.
\end{enumerate}
\end{prop}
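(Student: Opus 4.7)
My plan is to verify each of (D1)--(D3) by unpacking the concrete realization of $\ddelta$ given in \cite{eh}, where it is described as (a shift of) the top Bernstein--Zelevinsky derivative. For $\GL_n(F_\ell)$ this can be written as the functor of $(N_n, \psi)$-coinvariants, with $N_n$ the unipotent radical of the upper-triangular Borel and $\psi : N_n \to W(k)^\times$ a fixed non-degenerate character. With this description, the $W(k)$-linearity in (D1) is immediate, since the coinvariants quotient is defined by $W(k)$-linear relations and the forgetful functor to $W(k)$-modules is $W(k)$-linear.

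Exactness of $\ddelta$, the main content of (D1), is classical in the complex setting due to Bernstein--Zelevinsky and extends to $W(k)$-coefficients because $\ell \neq p$. The key ingredient is a family of averaging operators attached to compact open subgroups of $N_n$, which integrate against $\psi$ to produce explicit sections of the quotient map onto coinvariants; these operators are $W(k)$-integral once one can divide by the orders of the relevant pro-$\ell$ quotients, which is possible precisely because $\ell$ is invertible in $W(k)$. The required extension to integral and modular coefficients is part of Vign\'eras's framework and is the form used in \cite{eh}.

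For (D2), one reads $N$ as a $W(k)$-module (with trivial $G_\ell$-action, so that $M \otimes_{W(k)} N$ is a smooth $W(k)[G_\ell]$-module via its first factor). The formula then reduces to the statement that $\ddelta$ commutes with tensor products of $W(k)$-modules; this is formal since $\ddelta$ is a colimit functor (coinvariants), hence commutes with all colimits, and both sides represent the same functor on $W(k)$-modules via the universal properties of coinvariants and of tensor product. Alternatively, one can write $N$ as a colimit of finitely generated free $W(k)$-modules and reduce to the obvious case $N = W(k)$.

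For (D3), the existence of a left adjoint is not automatic: coinvariants is already a left adjoint (to smooth induction twisted by $\psi$), so preservation of colimits is built in, but producing a further left adjoint for $\ddelta$ requires preservation of limits, i.e.\ left exactness (supplied by (D1)) together with a suitable accessibility condition. Equivalently, one constructs the left adjoint directly, as is done in \cite{eh} using a compact-induction construction built from $\psi$ and the mirabolic structure of $\GL_n$; Frobenius reciprocity for compact induction combined with the universal property of coinvariants then gives the required natural isomorphism. The main obstacle in the proposition is thus verifying left exactness in (D1) in the integral setting; once that is in place, (D2) and (D3) follow essentially formally.
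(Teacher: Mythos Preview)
The paper does not actually prove this proposition: it is stated as a recall of facts from \cite{eh}, introduced by ``We recall some of its properties as follows,'' with no proof environment following. Your sketch therefore goes beyond what the paper supplies, and the content is essentially correct.

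A few remarks. Your identification of $\ddelta$ with the $(N_n,\psi)$-coinvariants functor and the exactness argument via $\ell$-invertible averaging operators is the standard route, and is exactly what underlies the references \cite{eh} and Vign\'eras. For (D2), the paper's phrasing ``for all smooth $W(k)[G_\ell]$-modules $N$ and $M$'' is infelicitous; the statement only makes sense, and is only ever used in the paper, with $N$ a plain $W(k)$-module (or with $G_\ell$ acting trivially on the second factor), which is precisely your reading. Your coinvariants-commute-with-colimits argument is then correct. For (D3), the direct construction of the left adjoint by iterating the compact-induction functors $\Psi^+,\Phi^+$ of Bernstein--Zelevinsky is the right and standard approach; the detour you sketch through the adjoint functor theorem and accessibility is unnecessary, and slightly misleading in emphasis, since the existence of the left adjoint does not in fact depend on first establishing left exactness.
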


For each $K \in \mcK$, we say that a smooth $K[G_\ell]$-module $M$ is \emph{generic} if $\ddelta[M] \neq 0.$ Note that for all $k_1,k_2 \in \mcK$ with an embedding $\iota: k_1 \to k_2$, and a generic $M$, the tensor product $M \otimes_{\iota} k_2$ is generic by (D2).

The following definition appears originally in \cite{eh} as the property \emph{essentially AIG}.
For all $K \in \mcK$, we say that $M$ belongs to $\eG(K)$ if
\begin{enumerate}[label=(eG\arabic*)]
\item $M$ is of finite length as a smooth $K[G_\ell]$-module,
\item $\soc M$ is absolutely irreducible and generic,
\item The map $\ddeltap[\soc M] \to \ddelta[M]$ induced by inclusion is an isomorphism, equivalently, $M/\soc M$ contains no generic subquotient.
\end{enumerate}
The only difference from \cite[Defintion 3.3]{helm2016whittaker} is the finite length requirement, which is in fact always satisfied by \cite[Corollary 5.5]{helm2016whittaker}.

If $M \in \eG(K)$ then its socle is irreducible, hence it matches the socle of any non-trivial $K[G_\ell]$-submodule. Hence $\eG(K)$ is closed with respect to taking submodules.

For an arbitrary $K[G_\ell]$-module, let $-^\sm$ denote its smooth vectors. By the smooth $K$-dual of a smooth $K[G_\ell]$-module, we mean the $K[G_\ell]$-module $\Hom_{K}(M,K)^\sm$ (where $\Hom_{K}(M,K)$ is equipped with the contragredient $G_\ell$-action). Since $G_\ell$ satisfies Condition 2.1.3 of \cite{eh}, $-^\sm$ is an exact functor, and so smooth duality is also exact.

We write $\eG^{\vi}(K)$ for the class of $W(k)[G_\ell]$-modules which are smooth $K$-dual to a $W(k)[G_\ell]$-module in $\eG(K).$ Dually, a quotient of a representation in $\eG^{\vi}(K)$ is also in $\eG^{\vi}(K).$

The following is the definition of co-Whittaker representations in \cite{helm2016whittaker}.
\begin{defn}
\label{coWh_defn}
Let $A$ be a Noetherian $W(k)$-algebra.
We call a smooth $A[G_\ell]$-module $M$ \emph{co-Whittaker} over $A$ if
\begin{enumerate}[label=(\arabic*)]
\item $M$ is admissible over $A$,
\item $\ddelta[M]$ is free of rank $1$ over $A$, and
\item if $\pp$ is a prime ideal of $A$ then $M \otimes \kappa(\pp) \in \eG^\vi(\kappa(\pp)).$
\end{enumerate}
\end{defn}

The paper \cite{helm2016whittaker} assigns to our fixed Galois representation $\overline{\rho}$ a block $\Rep_{W(k)}(G_\ell)_{[L,\pi]}$ of the category of smooth representations $\Rep_{W(k)}(G_\ell)$, consisting of representations all of whose subquotients have \emph{mod $p$ inertial supercuspidal support} given by the pair $[L, \pi]$. The Bernstein centre of this block (the ring of natural transformations $\mathrm{Id} \Rightarrow \mathrm{Id}$ on this full subcategory) is a $W(k)$-algebra denoted $A_{[L,\pi]},$ which acts on all objects of $\Rep_{W(k)}(G_\ell)_{[L,\pi]}$ by definition.
The connection between $\overline{\rho}$ and $A_{[L,\pi]}$ is that \cite[Conjecture 7.5]{helm2016whittaker}, proved in \cite{helmmoss} by Helm-Moss provides a map of rings
$$\mathrm{LL} : A_{[L,\pi]} \to  R_{\overline{\rho}}^\square$$
such that $A_{[L,\pi]},$ interpreted as a direct summand of the Bernstein centre, acts on $\tilde{\pi}(\rho^\square \otimes \kappa(x))$ through $\mathrm{LL}$ for all $x \in \Spec R_{\overline{\rho}}^\square.$

Moreover \cite{helm2016whittaker} shows the existence of a projective co-Whittaker $A_{[L,\pi]}[G_\ell]$-module $W_{[L,\pi]},$ which has a certain universal property \cite[Theorem 6.3]{helm2016whittaker} which we reformulate slightly as follows.

\begin{lemma}
\label{quotientofW_lemma}
Let $R$ be a Noetherian $W(k)$-algebra, and let $M \in \Rep_{R}(G_\ell)_{[L,\pi]}.$
Then $M$ is co-Whittaker over a quotient of $R$ if and only if there exists a surjection
$$W_{[L,\pi]} \otimes_{A_{[L,\pi]}} R \to M.$$ Any such $M$ is co-Whittaker over $R/\ann_{R}(\ddelta[M]).$
\end{lemma}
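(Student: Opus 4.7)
The plan is to derive both implications from the universal property of $W_{[L,\pi]}$ as stated in \cite[Theorem 6.3]{helm2016whittaker}, combined with formal manipulation of $\ddelta$ via properties (D1)--(D2) of Proposition \ref{ddelta_props_prop}. The easy direction is almost immediate: if $M$ is co-Whittaker over some quotient $R/I$, then Helm's theorem produces a surjection $W_{[L,\pi]} \otimes_{A_{[L,\pi]}} R/I \twoheadrightarrow M$, and composing with the natural surjection $W_{[L,\pi]} \otimes_{A_{[L,\pi]}} R \twoheadrightarrow W_{[L,\pi]} \otimes_{A_{[L,\pi]}} R/I$ yields the desired map.

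For the converse, fix a surjection $\phi \colon W_{[L,\pi]} \otimes_{A_{[L,\pi]}} R \twoheadrightarrow M$. Since $W_{[L,\pi]}$ is co-Whittaker over $A_{[L,\pi]}$, $\ddelta[W_{[L,\pi]}]$ is free of rank one, so by (D1)--(D2) we compute $\ddelta[W_{[L,\pi]} \otimes_{A_{[L,\pi]}} R] \cong R$, and the exact functor $\ddelta$ turns $\phi$ into a surjection $R \twoheadrightarrow \ddelta[M]$. Setting $J := \ann_R(\ddelta[M])$, this identifies $\ddelta[M] \cong R/J$, giving condition (2) of Definition \ref{coWh_defn} over $R/J$. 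It remains to check $JM = 0$ and verify the other two axioms.

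The crux is showing $JM = 0$. Given $x \in J$, the submodule $xM$ is a quotient of $W_{[L,\pi]} \otimes_{A_{[L,\pi]}} R$ via $w \otimes r \mapsto x\phi(w \otimes r)$; applying $\ddelta$ gives $\ddelta[xM] = x \cdot \ddelta[M] = 0$. For each prime $\pp \leq R$, base change along $R \to \kappa(\pp)$ shows $(xM) \otimes_R \kappa(\pp)$ is a quotient of $W_{[L,\pi]} \otimes_{A_{[L,\pi]}} \kappa(\pp) \in \eG^{\vi}(\kappa(\pp))$, and hence itself lies in $\eG^{\vi}(\kappa(\pp))$ by closure of the latter under quotients. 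Any non-zero representation in $\eG^{\vi}(\kappa(\pp))$ admits an irreducible generic cosocle (dually to the socle property defining $\eG$), hence has non-vanishing $\ddelta$; but $\ddelta[(xM) \otimes_R \kappa(\pp)] = 0$, so $(xM) \otimes_R \kappa(\pp) = 0$ for every $\pp$. As $xM$ is admissible over $R$, a standard Nakayama-type argument applied to the $K$-invariants for each compact open $K \leq G_\ell$ then yields $xM = 0$.

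Once $JM = 0$, the surjection $\phi$ factors through $W_{[L,\pi]} \otimes_{A_{[L,\pi]}} R/J$, and the remaining co-Whittaker axioms over $R/J$ follow directly: admissibility passes to quotients from $W_{[L,\pi]} \otimes_{A_{[L,\pi]}} R/J$, and for any prime $\pp$ of $R/J$ the fibre $M \otimes_{R/J} \kappa(\pp)$ is a non-zero (by condition (2)) quotient of $W_{[L,\pi]} \otimes_{A_{[L,\pi]}} \kappa(\pp) \in \eG^{\vi}(\kappa(\pp))$, hence also lies in $\eG^{\vi}(\kappa(\pp))$. The main obstacle is the annihilation statement $JM = 0$, which rests on the fact that non-zero objects of $\eG^{\vi}$ have non-vanishing $n$-th derivative; the rest is formal from $\ddelta$-exactness, base change, and the universal property of $W_{[L,\pi]}$.
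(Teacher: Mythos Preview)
Your proof is correct and follows the same overall route as the paper's: the forward direction is handled identically via \cite[Theorem 6.3]{helm2016whittaker}, and for the converse you verify the three axioms of Definition~\ref{coWh_defn} by exploiting exactness of $\ddelta$ and the fact that $W_{[L,\pi]}\otimes_{A_{[L,\pi]}}\kappa(\pp)$ lies in $\eG^{\vi}(\kappa(\pp))$.

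The one genuine addition in your argument is the verification that $J:=\ann_R(\ddelta[M])$ actually annihilates $M$, so that $M$ is an $R/J$-module and the phrase ``co-Whittaker over $R/J$'' is meaningful. The paper's proof simply checks admissibility over $R$, identifies $\ddelta[M]\cong R/J$, and checks condition~(3) at primes of $R/J$, without ever confirming $JM=0$; strictly speaking this is a gap relative to Definition~\ref{coWh_defn}. Your fibrewise argument---showing $(xM)\otimes_R\kappa(\pp)=0$ because it is a quotient of the co-Whittaker module $W_{[L,\pi]}\otimes_{A_{[L,\pi]}}\kappa(\pp)$ with vanishing $\ddelta$, and such quotients are zero---followed by the Nakayama step on $K$-invariants (for $K$ pro-$\ell$, so that $(-)^K$ is given by an idempotent and commutes with base change) is exactly the right way to close this. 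The underlying principle (a co-Whittaker module has no non-zero quotient with $\ddelta=0$) is used elsewhere in the paper, e.g.\ in the proof of Lemma~\ref{d_princip_cyclic_lemma}, so your appeal to it is consistent with the ambient framework.

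One minor phrasing issue: $\eG^{\vi}(K)$ as defined in the paper does not contain~$0$, so rather than saying $(xM)\otimes_R\kappa(\pp)$ ``lies in $\eG^{\vi}(\kappa(\pp))$'' you should phrase it as: if it were non-zero it would lie in $\eG^{\vi}(\kappa(\pp))$ and hence have non-vanishing $\ddelta$, contradiction. This is clearly what you mean and does not affect correctness.
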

\begin{proof}
Assume that $M$ is co-Whittaker over some quotient $S$ or $R.$
Then \cite[Theorem 6.3]{helm2016whittaker} yields a surjection to $M$ from
$W_{[L,\pi]} \otimes_{A_{[L,\pi]}} S,$
to which $W_{[L,\pi]} \otimes_{A_{[L,\pi]}} R$ surjects by taking the quotient map on the second factor.\\
For the converse, let $W_{[L,\pi]} \otimes_{A_{[L,\pi]}} R \to M$ be a surjection. Then $M$ is admissible over $R$ by being a quotient of an admissible representation. By the exactness of $\ddelta,$ $\ddelta[M]$ is cyclic over $R$ hence isomorphic to $S:=R/\ann_{R}(\ddelta[M]).$ Finally, for any $\pp \in \Spec S$ 
we have maps $R \to S \to \kappa(\pp)$ and so a surjection $W_{[L,\pi]} \otimes_{A_{[L,\pi]}} R \otimes_R \kappa(\pp) \to M \otimes \kappa(\pp).$ $W_{[L,\pi]} \otimes_{A_{[L,\pi]}} \kappa(\pp)$ is co-Whittaker over $R$ by \cite[Theorem 6.3]{helm2016whittaker}, $M \otimes \kappa(\pp)$ is a quotient of an element of $\eG^\vi(\kappa(\pp)),$ which is closed with respect to quotients.
\end{proof}
We can immediately deduce that the class of $R[G_\ell]$-modules that are co-Whittaker over a quotient ring of $R$ is closed with respect to taking quotients, as well as the following
\begin{lemma}
\label{quotientofcowhittaker_lemma}
Let $A \to B$ be a map of Noetherian $W(k)$-algebras. If $M$ is co-Whittaker over $A$ then $M \otimes_A B$ is co-Whittaker over $B.$
\end{lemma}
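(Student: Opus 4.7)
The plan is to reduce to the previous lemma (Lemma \ref{quotientofW_lemma}), using $W_{[L,\pi]}$ as a universal source of co-Whittaker quotients. First, I would apply Lemma \ref{quotientofW_lemma} in the forward direction to $M$: since $M$ is co-Whittaker over $A$, there exists a surjection $W_{[L,\pi]} \otimes_{A_{[L,\pi]}} A \to M$. Then I base change by applying the right exact functor $- \otimes_A B$, producing a surjection $W_{[L,\pi]} \otimes_{A_{[L,\pi]}} B \to M \otimes_A B$. Applying Lemma \ref{quotientofW_lemma} in the reverse direction with $R = B$ immediately gives that $M \otimes_A B$ is co-Whittaker over $B/\ann_{B}(\ddeltap[M \otimes_A B])$.

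To finish, I would show that this annihilator is zero, i.e.\ that $\ddeltap[M \otimes_A B]$ is free of rank $1$ over $B$. The plan is to establish the natural identification $\ddeltap[M \otimes_A B] \cong \ddelta[M] \otimes_A B$, after which freeness over $B$ follows at once from the hypothesis that $\ddelta[M]$ is free of rank $1$ over $A$. To obtain the identification, I would present $M \otimes_A B$ as the cokernel of the standard map $M \otimes_{W(k)} A \otimes_{W(k)} B \to M \otimes_{W(k)} B$ sending $m \otimes a \otimes b$ to $am \otimes b - m \otimes ab$; applying the exact functor $\ddelta$ (property (D1)) and invoking property (D2) to identify $\ddeltap[M \otimes_{W(k)} A \otimes_{W(k)} B]$ with $\ddelta[M] \otimes_{W(k)} A \otimes_{W(k)} B$ and $\ddeltap[M \otimes_{W(k)} B]$ with $\ddelta[M] \otimes_{W(k)} B$, the cokernel of the induced map is precisely $\ddelta[M] \otimes_A B$.

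The step most likely to require extra care is this identification, since (D2) is stated only for tensor products over $W(k)$ rather than over $A$. The key point is that the $A$-action on $\ddelta[M]$ is the one inherited functorially from the $A$-action on $M$, so that the induced map on $\ddelta$ in the cokernel presentation is literally the one defining $\ddelta[M] \otimes_A B$. Once this is in place the rest of the argument is formal, and the co-Whittaker property over $B$ follows.
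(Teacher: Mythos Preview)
Your proposal is correct and follows essentially the same approach as the paper, which simply states that the lemma is ``immediately deduced'' from Lemma~\ref{quotientofW_lemma} without further elaboration. You have spelled out the details the paper omits, in particular the verification that $\ddeltap[M \otimes_A B] \cong \ddelta[M] \otimes_A B$ via the cokernel presentation and properties (D1)--(D2); this is exactly the point needed to upgrade ``co-Whittaker over a quotient of $B$'' to ``co-Whittaker over $B$,'' and the paper takes it for granted.
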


The following two lemmas point out that surjections between $R[G_\ell]$-modules that are co-Whittaker over a quotient ring of $R$ have rather simple behavior.

\begin{lemma}
\label{d_princip_cyclic_lemma}
Let $R$ be a complete local $W(k)$-algebra.
Let $M,N$ be co-Whittaker representation of $G_\ell$ over quotients of $R$. Then any two surjections in $\Hom_{R[G_\ell]}(M,N)$ are multiples of each other (by elements of $R^\times.$)
\end{lemma}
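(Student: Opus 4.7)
The plan is to apply the derivative functor $\ddelta$ to reduce the comparison of $f$ and $g$ to a comparison of $R$-linear maps between principal $R$-modules, adjust $f$ by a unit extracted from this comparison so that the difference is $\ddelta$-trivial, and then invoke the universal property underlying $W_{[L,\pi]}$ to conclude the adjusted difference is itself zero.

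First, say $M$ is co-Whittaker over a quotient $A$ of $R$ and $N$ over a quotient $B$ of $R$, so that $\ddelta[M] \cong A$ and $\ddelta[N] \cong B$ as $R$-modules. Applying the exact functor $\ddelta$ to the surjections $f$ and $g$ yields two surjective $R$-linear maps $A \to B$. Any $R$-linear map $A \to B$ is determined by the image of $1 \in A$; for surjectivity this image must generate $B$ as an $R$-module, so it lies in $B^\times$. Thus $\ddelta[g] = \bar u \cdot \ddelta[f]$ for some $\bar u \in B^\times$. Since $R$ is a complete local ring with the same residue field as its quotient $B$, the reduction $R^\times \to B^\times$ is surjective and $\bar u$ lifts to some $u \in R^\times$.

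Set $h := g - u \cdot f : M \to N$, where $u$ acts on $N$ through its $R$-module structure; then $\ddelta[h] = 0$ by construction. To conclude $h = 0$ (giving $g = u f$), use Lemma \ref{quotientofW_lemma} to obtain a surjection $q_M : W_R \to M$ with $W_R := W_{[L,\pi]} \otimes_{A_{[L,\pi]}} R$, and consider $h \circ q_M : W_R \to N$, which still has $\ddelta[h \circ q_M] = 0$. By surjectivity of $q_M$ it suffices to show $h \circ q_M = 0$. For this I would appeal to the universal property of $W_{[L,\pi]}$ behind Lemma \ref{quotientofW_lemma} (essentially \cite[Theorem 6.3]{helm2016whittaker}), which furnishes a canonical identification $\Hom_{R[G_\ell]}(W_R, N) \cong \ddelta[N]$ via $\phi \mapsto \ddelta[\phi](1)$, where $1 \in \ddelta[W_R]$ is the canonical generator. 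Under this identification, $\ddelta[h \circ q_M] = 0$ forces $h \circ q_M = 0$, finishing the argument.

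The main obstacle is extracting the precise form of the Hom identification from the literature; if it is not cited in the form above, an alternative is to first pass to the residue field: $h \otimes_R k$ is a map between objects of $\eG^\vi(k)$ with zero $\ddelta$, and since any nonzero quotient of an object of $\eG^\vi(k)$ has nonzero $\ddelta$ (its cosocle being irreducible and generic), this gives $\ddelta[\im(h \otimes_R k)] = 0$, hence $\im(h \otimes_R k) = 0$, i.e.\ $h(M) \subseteq \mathfrak{m}_R N$. One then bootstraps this using Artin--Rees together with the admissibility of $N$ (which makes each $N^K$ finitely generated over $R$, hence $\mathfrak{m}_R$-adically separated by Krull's intersection theorem) to conclude $h(M) \subseteq \bigcap_j \mathfrak{m}_R^j N = 0$.
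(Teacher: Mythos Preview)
Your core strategy matches the paper's: apply $\ddelta$, find a scalar relating $\ddelta[f]$ and $\ddelta[g]$, and show the difference is zero. But you overcomplicate the step showing $h=0$, and your alternative (b) has a genuine gap.

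The paper's argument for $h=0$ is one line: the image of $h=g-xf$ is a quotient of the co-Whittaker module $M$ with $\ddelta[\im(h)]=0$, and a co-Whittaker module has no nonzero quotient with vanishing top derivative (any nonzero quotient of $M$ has nonzero reduction to $k$ by admissibility and Nakayama, hence surjects onto the generic cosocle of $M\otimes k\in\eG^\vi(k)$). No appeal to $W_{[L,\pi]}$ or bootstrapping is needed. Your route (a) via $\Hom_{R[G_\ell]}(W_R,N)\cong\ddelta[N]$ is valid but pulls in more machinery than necessary.

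Your alternative (b) does not work as written. You correctly deduce $h(M)\subseteq\mm_R N$ from $h\otimes_R k=0$, but the claimed bootstrap to $h(M)\subseteq\mm_R^j N$ is unjustified: Artin--Rees controls intersections of a fixed submodule with the $\mm_R$-adic filtration, it does not by itself push $h(M)$ deeper into that filtration. To iterate you would need to rerun the $\eG^\vi$ argument with target $\mm_R N$, which is not co-Whittaker. The fix is not to iterate at all but to apply Nakayama to the admissible quotient $\im(h)$ of $M$ (not to $N$): since $\ddelta[\im(h)]=0$ forces $\im(h)\otimes k=0$, admissibility gives $\im(h)=0$ directly.

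A minor organizational point: the paper does not first produce a unit. It uses that $\Hom_R(\ddelta[M],\ddelta[N])$ is cyclic to write $\ddelta[g]=x\,\ddelta[f]$ for some $x\in R$, proves $g=xf$ as above, and only then deduces $x\in R^\times$ from surjectivity of $g$ together with $N\neq\mm_R N$ (admissibility again). This avoids your lifting step $B^\times\to R^\times$.
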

\begin{proof}
Let $f,g \in \Hom_{R[G_\ell]}(M,N).$ Since $\Hom_R(\ddelta[M],\ddelta[N])$ is cyclic, we can assume without loss of generality that $\ddelta[g] = x \cdot \ddelta[f]$ for some $x \in R.$ Consider the morphism $g - x \cdot f.$ Its image must have $\ddelta = 0,$ and therefore be trivial (since $M$ has no non-trivial quotient with $\ddelta =0$). Then $g = x \cdot f$. If $f,g$ are both surjections, then $x$ cannot be contained in the maximal ideal $\mm_R$ as $N \neq \mm_R N$ since $N$ is $\mm_R$-adically separated by being admissible over $R.$
\end{proof}

\begin{lemma}
\label{d_princip_crucial_surjection_lemma}
Let $R$ be a complete local $W(k)$-algebra.
Let $f_1 : M \to M_1$ and $f_2 : M \to M_2$ be quotients of a co-Whittaker $R[G_\ell]$-module $M$. If there exists any surjection $j : M_1 \to M_2$ then the quotient map $f_2: M \to M_2$ factors as $M \xrightarrow{f_1} M_1 \xrightarrow{j} M_2.$
\end{lemma}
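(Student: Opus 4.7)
The plan is to reduce the statement directly to Lemma \ref{d_princip_cyclic_lemma}. The natural pair of maps to compare are $f_2 : M \to M_2$ and the composite $j \circ f_1 : M \to M_2$; both are surjections by construction (the latter because $f_1$ and $j$ are).

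First I would verify that this pair falls within the scope of Lemma \ref{d_princip_cyclic_lemma}. The source $M$ is co-Whittaker over a quotient of $R$ by hypothesis, and $M_2$ is a quotient of $M$, hence co-Whittaker over a quotient of $R$ by the closure remark that follows Lemma \ref{quotientofW_lemma}. Consequently Lemma \ref{d_princip_cyclic_lemma} supplies some $x \in R^\times$ with $j \circ f_1 = x \cdot f_2$ in $\Hom_{R[G_\ell]}(M, M_2)$. Because $x$ is a unit, the map $x^{-1} \cdot j : M_1 \to M_2$ is still a surjection, and one obtains the factorisation $f_2 = (x^{-1} \cdot j) \circ f_1$. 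Thus $f_2$ does factor through $f_1$ via a surjection $M_1 \to M_2$, and this surjection differs from $j$ only by a unit of $R$, which is the sense in which the statement should be read.

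The main (very mild) subtlety is precisely this reading of the conclusion: the statement cannot be literally about $j$ itself, because the previous lemma only controls surjections between co-Whittaker modules up to $R^\times$. Once this is acknowledged, there is no real obstacle; the entire content is already packaged in Lemma \ref{d_princip_cyclic_lemma}, and all that is required here is to apply it to the two naturally arising surjections $j \circ f_1$ and $f_2$.
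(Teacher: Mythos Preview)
Your proof is correct and essentially identical to the paper's own argument: both apply Lemma \ref{d_princip_cyclic_lemma} to the surjections $j\circ f_1$ and $f_2$ and then absorb the resulting unit. The only cosmetic difference is that you absorb the unit into $j$ (writing $f_2=(x^{-1}j)\circ f_1$) whereas the paper absorbs it into $f_1$ (writing $f_2=j\circ(r^{-1}f_1)$); your remark about the statement being intended only up to $R^\times$ is exactly right and matches the paper's usage.
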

\begin{proof}
Note that $M_1$ and $M_2$ are co-Whittaker over some quotient of $R$ by Lemma \ref{quotientofW_lemma}. Then by Lemma \ref{d_princip_cyclic_lemma}, we have 
$$(j \circ f_1)=r \cdot f_2$$ for some $r \in R^\times.$ Hence $f_2$ factors as $j \circ (r^{-1} \cdot f_1).$
\end{proof}
In the special case when $M_1 \cong M_2$ we obtain
\begin{cor}
\label{surjkernel_indep_cor}
Let $R$ be a complete local $W(k)$-algebra, $M$ be a co-Whittaker $R[G_\ell]$-module, and let 
$N$ be a quotient of $M.$ Then $\ker f$ is independent of the choice of a quotient map $f : M \to N.$
\end{cor}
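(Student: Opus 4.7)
The plan is to deduce the statement as a direct consequence of Lemma \ref{d_princip_cyclic_lemma}, since Corollary \ref{surjkernel_indep_cor} is exactly the special case $M_1 = M_2 = N$ of that lemma's setup, with the two quotient maps $M \to N$ being compared.

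First, I would fix two arbitrary surjective $R[G_\ell]$-maps $f, g : M \to N$. Since $M$ is co-Whittaker over $R$, and $N$ is a quotient of $M$, Lemma \ref{quotientofW_lemma} guarantees that $N$ is co-Whittaker over some quotient of $R$, so both $M$ and $N$ satisfy the hypotheses of Lemma \ref{d_princip_cyclic_lemma}. Applying that lemma to the pair $f,g \in \Hom_{R[G_\ell]}(M,N)$, I obtain a unit $r \in R^\times$ with $g = r \cdot f$.

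The key observation is then that $r$ acts on $N$ as an $R$-module automorphism, because $r$ is a unit in $R$ and $N$ is an $R$-module (here it is irrelevant whether $r$ maps to a unit or to zero in the quotient of $R$ over which $N$ is co-Whittaker; in either case multiplication by $r$ on $N$ is invertible, in the latter case trivially so, though that case does not arise since $f$ is surjective onto a nonzero module). Consequently, for any $m \in M$ we have $g(m) = 0$ if and only if $r \cdot f(m) = 0$ if and only if $f(m) = 0$, so $\ker g = \ker f$.

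There is no substantive obstacle beyond invoking the previous lemma; the only thing to watch is that we really are in its setting, i.e.\ that $N$ is co-Whittaker over a quotient of $R$, which is why I would cite Lemma \ref{quotientofW_lemma} explicitly at the start. The proof is essentially a two-line verification.
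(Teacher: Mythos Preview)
Your proof is correct and essentially matches the paper's: the paper obtains the corollary as the special case $M_1 \cong M_2$ of Lemma~\ref{d_princip_crucial_surjection_lemma}, whose proof in turn is nothing more than the invocation of Lemma~\ref{d_princip_cyclic_lemma} that you carry out directly. Your explicit appeal to Lemma~\ref{quotientofW_lemma} to verify that $N$ is co-Whittaker over a quotient of $R$ is appropriate (with the trivial caveat that if $N=0$ the statement is vacuous).
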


We can now prove Lemma \ref{cWh_cover_lemma}.
\begin{lemma*}
Let $R$ be a Noetherian $W(k)$-algebra, and let $(M_i)_{i \in I}$ be a collection of $R[G_\ell]$-modules, such that each is co-Whittaker over some quotient of $R$ belonging to $\mathrm{QDVR}_k$. Assume moreover that the action of the integral Bernstein center on each $M_i$ factor through the same direct summand. Then there exists a universal co-Whittaker cover of $(M_i)_{i \in I}.$
\end{lemma*}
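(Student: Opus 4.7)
The plan is to realise the desired universal co-Whittaker cover as a suitable quotient of the ``maximal'' candidate $W := W_{[L,\pi]} \otimes_{A_{[L,\pi]}} R$. Since the Bernstein-centre hypothesis places every $M_i$ in $\Rep_R(G_\ell)_{[L,\pi]}$, Lemma \ref{quotientofW_lemma} will supply surjections $\pi_i \colon W \twoheadrightarrow M_i$ for each $i$, and I would then form $M := W/K$ with $K := \bigcap_{i \in I} \ker \pi_i$.

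The first essential input is that $\ker \pi_i$ depends only on $M_i$, not on the choice of $\pi_i$. I would establish this by observing that $M_i$ is co-Whittaker over a complete local quotient $S_i \in \mathrm{QDVR}_k$ of $R$, so that $\pi_i$ factors through the base change $W \otimes_R S_i$ (which is co-Whittaker over $S_i$ by Lemma \ref{quotientofcowhittaker_lemma}); Corollary \ref{surjkernel_indep_cor} then applies over $S_i$, and the independence for $\pi_i$ itself follows by pullback along $W \to W \otimes_R S_i$. Once $M := W/K$ is well-defined, the induced surjections $g_i \colon M \to M_i$ make $M$ a cover of $(M_i)_{i \in I}$, and the converse direction of Lemma \ref{quotientofW_lemma}, applied to the surjection $W \twoheadrightarrow M$, ensures that $M$ is co-Whittaker over $R/\ann_R(\ddelta[M])$.

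For universality, I would start from an arbitrary co-Whittaker cover $(M', (f_i'))$ and a surjection $p' \colon W \twoheadrightarrow M'$ (once again supplied by Lemma \ref{quotientofW_lemma}). Each composite $f_i' \circ p'$ is a surjection $W \to M_i$, hence has kernel $\ker \pi_i$ by the previous step, and so $\ker p' \subseteq \ker \pi_i$ for every $i$; intersecting over $i$ gives $\ker p' \subseteq K$, which produces the desired surjection $q \colon M' \twoheadrightarrow M$ by passing to quotients. Finally, to factor each $f_i'$ through $q$, I would appeal to Lemma \ref{d_princip_cyclic_lemma} applied over the complete local $S_i$: this forces $f_i' \circ p' = r_i \cdot \pi_i$ for some $r_i \in R$ whose image in $S_i$ is a unit, and combining with $\pi_i = g_i \circ q \circ p'$ and the surjectivity of $p'$ yields $f_i' = (r_i g_i) \circ q$ on the nose.

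The single subtlety I foresee is purely bookkeeping: Corollary \ref{surjkernel_indep_cor} and Lemma \ref{d_princip_cyclic_lemma} are stated for complete local $W(k)$-algebras, whereas $R$ here is only Noetherian. The $\mathrm{QDVR}_k$ hypothesis in the statement is exactly what lets me transfer each invocation of those statements to the complete local quotient $S_i$ over which $M_i$ actually lives, then lift back to $W$ or $M'$ via the familiar ``kernel is the preimage of the kernel'' principle.
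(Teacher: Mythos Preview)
Your proposal is correct and follows essentially the same construction as the paper: form $M$ as the quotient of $W_{[L,\pi]} \otimes_{A_{[L,\pi]}} R$ by the intersection of the kernels of chosen surjections to the $M_i$, use Corollary \ref{surjkernel_indep_cor} for well-definedness, and derive universality from any other cover $M'$ via the surjection $W \to M'$ supplied by Lemma \ref{quotientofW_lemma}. You are in fact more careful than the paper's own proof about why the complete-local hypothesis in Corollary \ref{surjkernel_indep_cor} and Lemma \ref{d_princip_cyclic_lemma} is available (by passing to the quotient $S_i \in \mathrm{QDVR}_k$ and pulling back), and more explicit about the factoring condition $f_i' = (r_i g_i) \circ q$, both of which the paper leaves implicit.
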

\begin{proof}
Without loss of generality we assume that $M_i \in \Rep_{R}(G_\ell)_{[L,\pi]}$ for all $i \in I.$
By Lemma \ref{quotientofW_lemma}, we can choose surjections $\beta_i : W_{[L,\pi]} \otimes_{A_{[L,\pi]}} R \to M_i.$ By Corollary \ref{surjkernel_indep_cor}, 
$$M:=\left( W_{[L,\pi]} \otimes_{A_{[L,\pi]}} R \right) / \bigcap_{i \in I} \ker \beta_i$$
is independent of the choice of the $\beta_i$. If $M'$ is co-Whittaker over some quotient of $R$ and admits surjections to all $M_i,$ then we have maps
$$W_{[L,\pi]} \otimes_{A_{[L,\pi]}} R \to M' \to \prod_{i \in I} M_i$$
by Lemma \ref{quotientofW_lemma}. However, the image of $W_{[L,\pi]} \otimes_{A_{[L,\pi]}} R$ under the composite map must be isomorphic to $\left( W_{[L,\pi]} \otimes_{A_{[L,\pi]}} R \right) / \bigcap_{i \in I} \ker \beta'_i$ for some other choice of surjections $\beta'_i : W_{[L,\pi]} \otimes_{A_{[L,\pi]}} R \to M_i,$ and so isomorphic to $M.$ This shows the universal property for $M$.
\end{proof}

\subsection{The Langlands correspondence in families}
We will work with the $R_{\overline{\rho}}^\square[[u]][G_\ell]$-module $W^\square,$ which we define to be $$W_{[L,\pi]} \otimes_{A_{[L,\pi]}} R_{\overline{\rho}}^\square[[u]].$$ It is co-Whittaker by \cite[Theorem 6.3]{helm2016whittaker}. Moreover, it is projective over $R_{\overline{\rho}}^\square[[u]][G_\ell]$ since $W_{[L,\pi]}$ is projective over $A_{[L,\pi]}[G_\ell].$

\noindent\textbf{Notation.} We will write $A:=R_{\overline{\rho}}^\square[[u]]$ for brevity throughout the rest of the paper.
\begin{defn}
Let $P$ be the set of points $x$ of $A=\Spec R_{\overline{\rho}}^\square[[u]]$ with corresponding prime ideal $\pp_x$ such that $A/\pp_x$ is a discrete valuation ring of characteristic $0$ and the image of $u$ in the quotient is a uniformiser.
\end{defn}
Lemma \ref{interp_DVRpts_are_dense_in_Ru} shows that $P$ is Zariski dense.

For $x \in \Spec(A)$ and corresponding a (framed) deformation $\rho_x : G_E \to \GL_n(A/\pp_x)$ given by $\rho_x=\rho^\square \otimes_A A/\pp_x$, \cite{eh} assigns a representation $\tilde{\pi}(\rho_x)$ (independently of the framing) which is torsion-free and co-Whittaker \cite{helm2016whittaker}. In particular, it is admissible over $A/\pp_x$, and satisfies $\tilde{\pi}(\rho_x) \otimes_{A/\pp_x} \Frac(A/\pp_x) \cong \tilde{\pi}(\rho_x \otimes_{A/\pp_x}  \Frac(A/\pp_x)),$ and it is the unique such $(A/\pp_x)[G_\ell]$-module up to isomorphism. We will often use this uniqueness property.

If $x \in P$ then $R_{\overline{\rho}}^\square[[u]]/\pp_x = \kappa^+(x),$ in particular, any uniformiser of the discrete valuation ring $\kappa^+(x)$ lifts to $R_{\overline{\rho}}^\square[[u]].$

\begin{prop}
\label{interp_surjection_to_pirhox_prop}
For all $x \in P,$
\begin{enumerate}[label=(\alph*)]
    \item There is a surjection $W^\square \otimes_A \kappa(x) \to \tilde{\pi}(\rho^\square \otimes_A \kappa^+(x)) \otimes_{A} \kappa(x)$
    \item If $M$ is co-Whittaker over $A$ and admits a surjection $M \otimes_A \kappa(x) \to \tilde{\pi}(\rho \otimes_A \kappa^+(x)) \otimes_{A} \kappa(x)$ then there exists a surjection $M \to \tilde{\pi}(\rho \otimes_A \kappa^+(x))$ which is unique up to a factor in $\kappa^+(x)^\times.$
\end{enumerate}
\end{prop}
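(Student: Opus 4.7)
The plan is to reduce both parts to the universality of $W^\square$ (Lemma \ref{quotientofW_lemma}) combined with the rigidity statement that any two surjections between co-Whittaker modules over a complete local ring differ by a unit (Lemma \ref{d_princip_cyclic_lemma}). Part (a) is essentially immediate: since $x \in P$, the quotient $A/\pp_x = \kappa^+(x)$ is a characteristic $0$ complete DVR, and $\tilde{\pi}(\rho^\square \otimes_A \kappa^+(x))$ is co-Whittaker over $\kappa^+(x)$ by its defining property. Applying Lemma \ref{quotientofW_lemma} with the $W(k)$-algebra $A$ therefore yields a surjection $W^\square \to \tilde{\pi}(\rho^\square \otimes_A \kappa^+(x))$; tensoring with $\kappa(x)$ over $A$ gives (a) by right exactness of the tensor product.

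For (b), first observe that $\pp_x$ annihilates the target $\tilde{\pi}(\rho \otimes_A \kappa^+(x))$, so every map from $M$ into it factors through $M/\pp_x M = M \otimes_A \kappa^+(x)$. It therefore suffices to construct a surjection $M \otimes_A \kappa^+(x) \to \tilde{\pi}(\rho \otimes_A \kappa^+(x))$. Using Lemma \ref{quotientofW_lemma} over $\kappa^+(x)$ twice, I fix surjections
$$\beta : W^\square \otimes_A \kappa^+(x) \to M \otimes_A \kappa^+(x), \qquad \gamma : W^\square \otimes_A \kappa^+(x) \to \tilde{\pi}(\rho \otimes_A \kappa^+(x)),$$
and aim to show $\ker \beta \subseteq \ker \gamma$, so that $\gamma$ factors through $\beta$.

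The key step is to verify this inclusion of kernels after passing to the generic fiber $\kappa(x)$ and then descend using torsion-freeness. Tensoring with $\kappa(x)$, the composite $\bar g \circ (\beta \otimes \kappa(x))$ (using the hypothesized $\bar g : M \otimes_A \kappa(x) \to \tilde{\pi}(\rho \otimes_A \kappa^+(x)) \otimes_A \kappa(x)$) and $\gamma \otimes \kappa(x)$ are both surjections between co-Whittaker $\kappa(x)[G_\ell]$-modules, hence scalar multiples of each other by Lemma \ref{d_princip_cyclic_lemma} and share the same kernel. Given $m \in \ker \beta$, its image in $W^\square \otimes_A \kappa(x)$ lies in $\ker(\beta \otimes \kappa(x)) = \ker(\gamma \otimes \kappa(x))$, so $\gamma(m) \otimes 1 = 0$ in $\tilde{\pi}(\rho \otimes_A \kappa^+(x)) \otimes_A \kappa(x)$. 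Since $\tilde{\pi}(\rho \otimes_A \kappa^+(x))$ is $\kappa^+(x)$-torsion-free by construction, this forces $\gamma(m) = 0$. Uniqueness up to $\kappa^+(x)^\times$ is then a direct application of Lemma \ref{d_princip_cyclic_lemma} over $A$: any two surjections $M \to \tilde{\pi}(\rho \otimes_A \kappa^+(x))$ differ by a unit of $A^\times$, which acts on the $\pp_x$-annihilated target through its image in $(A/\pp_x)^\times = \kappa^+(x)^\times$. The principal subtlety throughout is precisely this torsion-freeness step, which upgrades the generic-fiber inclusion of kernels to the integral one; the rest is formal manipulation with $W^\square$ and Lemma \ref{d_princip_cyclic_lemma}.
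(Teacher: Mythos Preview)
Your proof is correct, but your route for (b) is genuinely different from the paper's. For (a) you and the paper do the same thing (the paper cites \cite[Proposition 5.4]{helm2016whittaker}, which is the content of Lemma \ref{quotientofW_lemma}). For (b), the paper works directly with the given surjection $\beta$ over $\kappa(x)$: it computes $\ddelta[\beta]$ as a map $\kappa(x)\to\kappa(x)$, scales by a power $\varpi^d$ of a lifted uniformiser so that $\ddelta$ of the restriction to $M\otimes_A\kappa^+(x)$ hits $\kappa^+(x)\subset\kappa(x)$ exactly, and then invokes the cosocle argument from the end of the proof of \cite[Proposition 5.4]{helm2016whittaker} to upgrade surjectivity on $\ddelta$ to surjectivity onto $\tilde\pi(\rho\otimes_A\kappa^+(x))$. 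The uniqueness claim then falls out of the decomposition $\kappa(x)^\times=\varpi^{\ZZ}\times\kappa^+(x)^\times$.

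Your argument instead routes everything through $W^\square\otimes_A\kappa^+(x)$ as a common co-Whittaker cover, compares kernels at the generic fibre using Lemma \ref{d_princip_cyclic_lemma}, and then descends the kernel inclusion to $\kappa^+(x)$ using only that $\tilde\pi(\rho\otimes_A\kappa^+(x))$ is torsion-free. This avoids both the explicit uniformiser bookkeeping and the appeal to the cosocle argument of \cite{helm2016whittaker}; it uses only the rigidity Lemma \ref{d_princip_cyclic_lemma} and the defining torsion-freeness of $\tilde\pi$. The paper's approach is more constructive (it tells you the lift is $\varpi^d\beta$ for a specific $d$), while yours is cleaner and more structural. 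Both are valid; yours arguably isolates the essential input (torsion-freeness of the target) more transparently.
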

\begin{proof}
(a) Such a surjection is guaranteed to exist by \cite[Proposition 5.4]{helm2016whittaker}, and is unique up to a factor in $\kappa(x)^\times$ by \cite[Proposition 6.2]{helm2016whittaker}.\\
(b) Let $\beta$ be any surjective map $M \otimes_A \kappa(x) \to \tilde{\pi}(\rho^\square \otimes_A \kappa^+(x)) \otimes_{A} \kappa(x).$ 
Such a $\beta$ is guaranteed to exist by \cite[Proposition 5.4]{helm2016whittaker}, and is unique up to a factor in $\kappa(x)^\times$ by \cite[Proposition 6.2]{helm2016whittaker}. Let $\varpi$ be a lift of a uniformiser of $\kappa^+(x)$ to $A.$ By the co-Whittaker property, $\ddelta[\beta] : \ddelta[M] \to \ddeltap[ \tilde{\pi}(\rho^\square \otimes_A \kappa^+(x)) \otimes_A \kappa(x)]$ is a surjective map between the free modules $A \otimes_A \kappa(x) \to \kappa(x).$ We have $\ddeltap[M \otimes_A \kappa^+(x)] \cong \kappa^+(x)$,  so $\varpi^d \beta$ for some unique $d \in \ZZ$ is such that $\ddelta \circ \beta$ restricted to $M \otimes_A \kappa^+(x) \subseteq \kappa(x)$ surjects to $\kappa^+(x).$
Using that the cosocle of $\tilde{\pi}(\rho^\square \otimes_A \kappa^+(x))$ has $\ddelta \neq 0,$ surjectivity of $\ddelta \circ \varpi^d \beta|_{M}$ to $\kappa^+(x)$ is equivalent to the surjectivity of $\varpi^d \beta|_{M}$ to $\tilde{\pi}(\rho^\square \otimes_A \kappa^+(x))$ by the argument found at the end of the proof of \cite[Proposition 5.4]{helm2016whittaker}. We conclude that $\varpi^d \beta|_{W}$ is surjective to $\tilde{\pi}(\rho \otimes_A \kappa^+(x)).$ The decomposition $\kappa(x)^\times = \varpi^\ZZ \times \kappa^+(x)^\times$ shows uniqueness up to the claimed factor.
\end{proof}

\subsection{Definition of $\tilde{\Pi}_{\overline{\rho}}$}

\begin{defn}
Choose a family of surjections $(\beta_x)_{x \in P}$ as in Proposition \ref{interp_surjection_to_pirhox_prop}. Let
$$\tilde{\Pi}_{\overline{\rho}} := \im \left( W^\square \xrightarrow{\prod \beta_x } \prod_{x \in P}  \tilde{\pi}(\rho^\square \otimes \kappa^+(x))  \right)$$
be the image of the product of these maps considered as an $A[G_\ell]$-module.
\end{defn}
We claim that this image is well-defined up to isomorphism as an $A[G_\ell]$-module. Indeed, $$\tilde{\Pi}_{\overline{\rho}}=W^\square/\bigcap_{x \in P} \ker \beta_x$$ and $\ker \beta_x$ only depends on $x$ by Proposition \ref{interp_surjection_to_pirhox_prop}.

\begin{prop}
\label{coWh_identification_prop}
$\tilde{\Pi}_{\overline{\rho}} = \tilde{\Pi}^{\mathrm{DVR},u}(\rho^\square).$
\end{prop}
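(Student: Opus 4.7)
My plan is to exhibit mutually inverse surjections between $\tilde{\Pi}_{\overline{\rho}}$ and $\tilde{\Pi}^{\mathrm{DVR},u}(\rho^\square)$. Both are co-Whittaker quotients of $W^\square$: $\tilde{\Pi}_{\overline{\rho}}$ by its defining presentation and Lemma \ref{quotientofW_lemma}, and $\tilde{\Pi}^{\mathrm{DVR},u}(\rho^\square)$ via the explicit description $W^\square / \bigcap_\beta \ker(W^\square \to \tilde{\pi}^{\mathrm{DVR}}(\rho^\square \otimes_\beta S))$ coming from the proof of Lemma \ref{cWh_cover_lemma}. Once surjections in both directions are constructed, applying Lemma \ref{d_princip_cyclic_lemma} to the composite endomorphism forces it to be a unit scalar multiple of the identity, yielding the isomorphism.

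For the surjection $\tilde{\Pi}_{\overline{\rho}} \twoheadrightarrow \tilde{\Pi}^{\mathrm{DVR},u}(\rho^\square)$, by the universal property of the target it suffices to produce $A$-linear surjections $\tilde{\Pi}_{\overline{\rho}} \to \tilde{\pi}^{\mathrm{DVR}}(\rho^\square \otimes_\beta S)$ for each quotient $\beta : A \to S$ sending $u$ to a uniformiser. Using Lemma \ref{quotientofcowhittaker_lemma}, it is enough to surject $\tilde{\Pi}_{\overline{\rho}} \otimes_A S$ onto each $\tilde{\pi}(\rho^\square_{\tilde{S}}) \otimes_{\tilde{S}} S$ in the defining collection, indexed by lifts $\beta' : A \to \tilde{S}$ of $\beta$ for $\tilde{S} \in \mathrm{DVR}_S$. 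The key observation is that every such $\beta'$ is automatically surjective: since $\tilde{S}$ is a complete characteristic $0$ DVR with algebraically closed residue field $k$, Cohen's structure theorem makes $\tilde{S}$ a finite totally ramified extension of $W(k)$ generated as a $W(k)$-algebra by any uniformiser (through an Eisenstein polynomial identification); because $\beta'(u)$ reduces modulo $\mathfrak{m}_{\tilde{S}}$ to the uniformiser $\beta(u)$ of $S$, it is itself a uniformiser of $\tilde{S}$, so the image of $\beta'$ contains $W(k)[\beta'(u)] = \tilde{S}$. Hence $\ker \beta' = \pp_x$ for some $x \in P$ with $\tilde{S} = \kappa^+(x)$ and $\rho^\square_{\tilde{S}} = \rho^\square \otimes \kappa^+(x)$, and the required map is the $x$-th defining projection $\tilde{\Pi}_{\overline{\rho}} \to \tilde{\pi}(\rho^\square \otimes \kappa^+(x))$ composed with the natural reduction to $\tilde{\pi}(\rho^\square \otimes \kappa^+(x)) \otimes_{\tilde{S}} S$.

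For the reverse surjection, since $\tilde{\Pi}_{\overline{\rho}}$ is the image of $W^\square$ in $\prod_{x \in P} \tilde{\pi}(\rho^\square \otimes \kappa^+(x))$, it suffices to show that for each $x \in P$ the kernel of $W^\square \twoheadrightarrow \tilde{\Pi}^{\mathrm{DVR},u}(\rho^\square)$ is contained in the kernel of the $x$-th projection $W^\square \to \tilde{\pi}(\rho^\square \otimes \kappa^+(x))$. For each $j \ge 1$, put $S_j := \kappa^+(x)/\varpi^j \in \mathrm{QDVR}_k$ and let $\beta_j : A \twoheadrightarrow S_j$ be the induced quotient, which sends $u$ to a uniformiser. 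Taking $\tilde{S} = \kappa^+(x) \in \mathrm{DVR}_{S_j}$ in the defining collection of $\tilde{\pi}^{\mathrm{DVR}}(\rho^\square \otimes_{\beta_j} S_j)$ yields a surjection $\tilde{\pi}^{\mathrm{DVR}}(\rho^\square \otimes_{\beta_j} S_j) \twoheadrightarrow \tilde{\pi}(\rho^\square \otimes \kappa^+(x))/\varpi^j$, so the composite $W^\square \to \tilde{\Pi}^{\mathrm{DVR},u}(\rho^\square) \to \tilde{\pi}^{\mathrm{DVR}}(\rho^\square \otimes_{\beta_j} S_j) \to \tilde{\pi}(\rho^\square \otimes \kappa^+(x))/\varpi^j$ pushes the kernel of $W^\square \to \tilde{\Pi}^{\mathrm{DVR},u}(\rho^\square)$ into $\varpi^j \tilde{\pi}(\rho^\square \otimes \kappa^+(x))$ for every $j$. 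Admissibility of $\tilde{\pi}(\rho^\square \otimes \kappa^+(x))$ over the complete DVR $\kappa^+(x)$ gives $\varpi$-adic separation (via separation of each $K$-invariants subspace, combined with the torsion-freeness of $\tilde{\pi}$), so the image is zero. The two main technical inputs are Cohen's structure theorem (for the surjectivity of $\beta'$ in Direction 1) and $\varpi$-adic separation of admissible modules over a complete DVR (in Direction 2); all remaining steps are formal applications of the universal properties.
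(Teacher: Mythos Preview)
Your proof is correct and rests on the same two observations as the paper's: (i) via the universal property of $R^\square_{\overline{\rho}}$ and the Cohen/Eisenstein argument, every lift $\rho_{\tilde S}$ appearing in the defining collection for $\tilde{\Pi}^{\mathrm{DVR},u}(\rho^\square)$ arises as $\rho^\square\otimes\kappa^+(x)$ for some $x\in P$; and (ii) conversely every $\tilde{\pi}(\rho^\square\otimes\kappa^+(x))$ is recovered from its truncations $\bmod\ \varpi^j$.

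The packaging differs slightly. The paper argues directly that the two objects are the universal co-Whittaker cover of the \emph{same} family (after unfolding by transitivity and reindexing by $P$), whereas you produce surjections in both directions and close with the unit-endomorphism trick from Lemma~\ref{d_princip_cyclic_lemma}. Your Direction~2, with the $\varpi$-adic separation argument via admissibility, actually makes explicit a step the paper only asserts when it passes from ``cover of all $\tilde{\pi}(\rho)\otimes_{\tilde S}S$'' to ``cover of all $\tilde{\pi}(\rho^\square\otimes\kappa^+(\pp))$''. Two minor remarks: in Direction~1 you should say explicitly that the reindexing from lifts $\rho_{\tilde S}$ to ring maps $\beta'$ uses the universal property of the framed deformation ring (your notation $\rho^\square_{\tilde S}$ suggests this but does not state it); and the appeal to torsion-freeness in the separation step is unnecessary, since Krull's intersection theorem already gives $\bigcap_j\varpi^j M=0$ for any finitely generated module over a Noetherian local ring. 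Finally, once you have shown the two kernels in $W^\square$ contain each other, you could conclude $K_1=K_2$ directly rather than invoking Lemma~\ref{d_princip_cyclic_lemma}, but your route is also valid.
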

\begin{proof}
By the transitivity of universal co-Whittaker covers, $\tilde{\Pi}^{\mathrm{DVR},u}(\rho^\square)$ is the universal co-Whittaker cover of all the representations
$$\tilde{\pi}(\rho) \otimes_{\tilde{S}} S$$
where $S \in \mathrm{QDVR}_k$ is a quotient of $A$ sending $u$ to a uniformiser of $S$ and $\rho : G_{F_\ell} \to \GL_n(\tilde{S})$ is a lift of $\rho^\square \otimes S$ to the characteristic $0$ discrete valuation ring $\tilde{S} \in \mathrm{DVR}_S.$ Since $\Rsq$ is a deformation ring, for any such data there is a homomorphism $\alpha : \Rsq \to \tilde{S}$ such that $\rho\cong \rho^\square \otimes \tilde{S}.$ Lifting the image of $u$ in $S$ to a uniformiser in $\tilde{S},$ we obtain a homomorphism $\alpha' : A \to \tilde{S}$ sending $u$ to a uniformiser. These are canonically in bijection with the set $P.$ Conversely, for any $\pp \in P$ we can set $\tilde{S}=\kappa^+(\pp),$ $\rho=\rho^\square \otimes \kappa^+(\pp)$ and $S=\kappa^+(\pp)/(u^j)$ for any $j \in \NN.$\\
Therefore a co-Whittaker cover of all the $\tilde{\pi}(\rho) \otimes_{\tilde{S}} S$ is equivalent to a co-Whittaker cover of
$$\left(\tilde{\pi}(\rho^\square \otimes \kappa^+(\pp)) \right)_{\pp \in P}.$$
Now we recognise that the definition of $\tilde{\Pi}_{\overline{\rho}}$ is precisely the construction of the universal co-Whittaker cover over $A$ given in Lemma \ref{cWh_cover_lemma} for this set of representations.
\end{proof}

\begin{prop}
\label{interp_bigpi_is_coWh_prop}
$\tilde{\Pi}_{\overline{\rho}}$ is a co-Whittaker $A[G_\ell]$-module.
\end{prop}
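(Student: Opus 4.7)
My strategy is to invoke Lemma \ref{quotientofW_lemma} and reduce the claim to showing that the annihilator of $\ddelta[\tilde{\Pi}_{\overline{\rho}}]$ vanishes. By construction, $\tilde{\Pi}_{\overline{\rho}}$ is a quotient of $W^\square = W_{[L,\pi]} \otimes_{A_{[L,\pi]}} A$, so Lemma \ref{quotientofW_lemma} applied with $R = A$ already shows that $\tilde{\Pi}_{\overline{\rho}}$ is co-Whittaker over the quotient ring $A/\ann_A(\ddelta[\tilde{\Pi}_{\overline{\rho}}])$. It therefore suffices to prove that $\ann_A(\ddelta[\tilde{\Pi}_{\overline{\rho}}]) = 0$, after which $\tilde{\Pi}_{\overline{\rho}}$ will be co-Whittaker over all of $A$.

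To control this annihilator, I would apply $\ddelta$ to the defining injection $\tilde{\Pi}_{\overline{\rho}} \hookrightarrow \prod_{x \in P} \tilde{\pi}(\rho^\square \otimes \kappa^+(x))$. Since $\ddelta$ is exact by (D1) and admits a left adjoint by (D3), it preserves all limits, in particular arbitrary products, so one obtains an injection
\[
\ddelta[\tilde{\Pi}_{\overline{\rho}}] \hookrightarrow \prod_{x \in P} \ddelta\bigl[\tilde{\pi}(\rho^\square \otimes \kappa^+(x))\bigr] = \prod_{x \in P} \kappa^+(x).
\]
The $A$-module $\ddelta[\tilde{\Pi}_{\overline{\rho}}]$ is cyclic as the image of $\ddelta[W^\square] = A$, and I would trace the cyclic generator (the image of $1 \in A$) through the injection. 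Each component $v_x \in \kappa^+(x)$ is a unit, because $\beta_x$ is surjective by Proposition \ref{interp_surjection_to_pirhox_prop}(b) and exactness of $\ddelta$ then yields a surjection between free rank-one modules $A = \ddelta[W^\square] \twoheadrightarrow \ddelta[\tilde{\pi}(\rho^\square \otimes \kappa^+(x))] = \kappa^+(x)$, whose image of $1$ must generate the target. Since $v_x$ is a unit in the domain $\kappa^+(x) = A/\pp_x$, the condition $a v_x = 0$ is equivalent to $a \in \pp_x$, hence
\[
\ann_A(\ddelta[\tilde{\Pi}_{\overline{\rho}}]) = \bigcap_{x \in P} \pp_x.
\]

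The final step is to show this intersection is zero. By Lemma \ref{interp_DVRpts_are_dense_in_Ru}, $P$ is Zariski dense in $\Spec A$, so $\bigcap_{x \in P} \pp_x$ is contained in every prime of $A$ and therefore equals its nilradical. Since $R^\square_{\overline{\rho}}$ is reduced by our standing hypothesis, and reducedness passes to formal power series rings over Noetherian rings (nilpotence of $\sum a_i u^i$ forces $a_0$ nilpotent hence zero, and one iterates using that $u$ is a non-zero-divisor), the ring $A = R^\square_{\overline{\rho}}[[u]]$ is reduced and its nilradical vanishes. I do not expect a serious obstacle; the only points worth flagging are the use of (D3) to commute $\ddelta$ past an infinite product, and the combination of Zariski density with reducedness of $A$, both of which are routine given the earlier setup.
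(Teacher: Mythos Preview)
Your proof is correct and follows essentially the same approach as the paper: both reduce via Lemma~\ref{quotientofW_lemma} to showing $\ddelta[\tilde{\Pi}_{\overline{\rho}}]$ is faithful over $A$, identify its annihilator with $\bigcap_{x\in P}\pp_x$, and conclude by Zariski density of $P$ and reducedness of $A$. The only cosmetic difference is that the paper applies the right-adjoint property of $\ddelta$ to the intersection $\bigcap_x \ker\beta_x$ (computing $\ddelta[\tilde{\Pi}_{\overline{\rho}}]=A/\bigcap_x\pp_x$ directly from the quotient presentation), whereas you apply it to the product $\prod_x \tilde{\pi}(\rho^\square\otimes\kappa^+(x))$ and trace the generator; you are also slightly more explicit about why $A=R^\square_{\overline{\rho}}[[u]]$ is reduced, which the paper leaves implicit.
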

\begin{proof}
By Lemma \ref{quotientofW_lemma}, it is sufficient to prove that $\ddelta[\tilde{\Pi}_{\overline{\rho}}]$ is a faithful module over $A.$ (It is then also free of rank $1.$)
Indeed, by the exactness of $\ddelta,$ we have 
$$\ddeltap[\tilde{\Pi}_{\overline{\rho}}] = \ddeltap[W^\square]/\ddeltap[\bigcap_{x \in P} \ker \beta_x] = \ddeltap[W^\square]/\bigcap_{x \in P} \ker \ddelta[\beta_x]=A/\bigcap_{x \in P} \pp_x=A$$
where we have also used that $\ddelta$ preserves filtered limits (by being a right adjoint), and that $P$ is Zariski dense.
\end{proof}

\subsection{Lower bounds on $\tilde{\Pi}_{\overline{\rho}}$}

\begin{lemma}
\label{Wsq_pi_injects_to_product_lemma}
Let $\Spec S \to \Spec A$ be a closed embedding of schemes, and assume that there is a set $P_{1,S} \subseteq P \cap \Spec S$ which is Zariski dense in $\Spec S$ and for each $\pp \in P_{1,S},$
$$\tilde{\pi}(\rho^\square \otimes S) \otimes_S S/\pp \cong \tilde{\pi}(\rho^\square \otimes S/\pp).$$

Then we can choose maps $W^\square \twoheadrightarrow \tilde{\pi}(\rho^\square \otimes S) \hookrightarrow \prod_{\qq \in P_{1,S}} \tilde{\pi}(\rho^\square \otimes \kappa^+(\pp))$
\end{lemma}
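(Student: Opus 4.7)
The plan is as follows. The surjection $W^\square \twoheadrightarrow \tilde{\pi}(\rho^\square \otimes S)$ comes directly from Lemma \ref{quotientofW_lemma}: since $\tilde{\pi}(\rho^\square \otimes S)$ is co-Whittaker over the quotient ring $S$ of $A = \Rsq[[u]]$, it is a quotient of $W_{[L,\pi]} \otimes_{A_{[L,\pi]}} S$, and precomposing with the projection $W^\square \to W_{[L,\pi]} \otimes_{A_{[L,\pi]}} S$ yields the desired surjection. For each $\pp \in P_{1,S}$ we have $\pp \in P$, so $S/\pp = A/\pp = \kappa^+(\pp)$; the hypothesis $\tilde{\pi}(\rho^\square \otimes S) \otimes_S S/\pp \cong \tilde{\pi}(\rho^\square \otimes S/\pp)$ identifies the natural reduction with a $G_\ell$-equivariant surjection $\tilde{\pi}(\rho^\square \otimes S) \twoheadrightarrow \tilde{\pi}(\rho^\square \otimes \kappa^+(\pp))$ of kernel $\pp \cdot \tilde{\pi}(\rho^\square \otimes S)$. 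Taking the product over $\pp \in P_{1,S}$ produces the desired map $\phi : \tilde{\pi}(\rho^\square \otimes S) \to \prod_\pp \tilde{\pi}(\rho^\square \otimes \kappa^+(\pp))$.

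The main task is showing $\phi$ is injective. Write $K = \ker \phi$ and $N = \im \phi$. The functor $\ddelta$ is exact by (D1) and preserves products, being a right adjoint by (D3); applied to the exact sequence $0 \to K \to \tilde{\pi}(\rho^\square \otimes S) \to N \to 0$, it gives $0 \to \ddelta[K] \to S \to \prod_\pp S/\pp$, where the last arrow sends $s$ to the tuple of its reductions modulo $\pp$. The kernel of this last arrow is $\bigcap_{\pp \in P_{1,S}} \pp$, which equals $\sqrt{0} = 0$ by Zariski density of $P_{1,S}$ in the reduced scheme $\Spec S$. Hence $\ddelta[K] = 0$, so $\ddelta[N] = S$, and by Lemma \ref{quotientofW_lemma} the image $N$ is co-Whittaker over $S$.

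To deduce $K = 0$, I plan to identify $N$ with the universal co-Whittaker cover of $(\tilde{\pi}(\rho^\square \otimes \kappa^+(\pp)))_{\pp \in P_{1,S}}$, paralleling the identification of $\tilde{\Pi}_{\overline{\rho}}$ in the proof of Proposition \ref{interp_bigpi_is_coWh_prop}. The module $\tilde{\pi}(\rho^\square \otimes S)$ is another co-Whittaker cover of the same family via the second-stage surjections. Combining the universality of $N$ (Lemma \ref{cWh_cover_lemma}) with Lemma \ref{d_princip_cyclic_lemma}, the $S$-torsion-freeness of the Emerton--Helm module $\tilde{\pi}(\rho^\square \otimes S)$, and Emerton--Helm's characterization of $\tilde{\pi}$ by its localizations at the minimal primes of $S$, should force the surjection $\tilde{\pi}(\rho^\square \otimes S) \twoheadrightarrow N$ to be an isomorphism. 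The main obstacle is this last promotion: deducing a genuine isomorphism of smooth $S[G_\ell]$-modules from the vanishing of $\ddelta$ on the kernel, for which the special structure of $\tilde{\pi}(\rho^\square \otimes S)$ as the $S$-torsion-free representative in its class is essential.
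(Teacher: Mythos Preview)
Your argument for the surjection $W^\square \twoheadrightarrow \tilde{\pi}(\rho^\square \otimes S)$ is correct and matches the paper.

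For the injection, however, you are taking a substantially longer route than necessary, and the gap you yourself flag as ``the main obstacle'' is genuine and is not resolved by the tools you list. The paper's proof is direct: it simply observes that the natural map
\[
\tilde{\pi}(\rho^\square \otimes S) \longrightarrow \prod_{\pp \in P_{1,S}} \tilde{\pi}(\rho^\square \otimes S) \otimes_S S/\pp
\]
is injective because $\tilde{\pi}(\rho^\square \otimes S)$ is $S$-torsion-free (this is one of the \emph{defining} properties of the Emerton--Helm module, not a consequence of being co-Whittaker) and $P_{1,S}$ is Zariski dense in the reduced scheme $\Spec S$, so that $\bigcap_{\pp\in P_{1,S}}\pp=0$. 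Composing with the product of the hypothesised isomorphisms $\tilde\pi(\rho^\square\otimes S)\otimes_S S/\pp\cong\tilde\pi(\rho^\square\otimes S/\pp)$ then gives the desired embedding into $\prod_\pp\tilde\pi(\rho^\square\otimes\kappa^+(\pp))$.

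Your detour through $\ddelta$ correctly establishes $\ddelta[K]=0$, but the promotion to $K=0$ does not follow from the co-Whittaker formalism alone: a co-Whittaker module can certainly have nonzero submodules with vanishing $n$th derivative (the $\eG^\vi$ condition controls the cosocle, not the socle). Your proposed fix via universal covers does not close the gap either: universality of $N$ only yields surjections \emph{onto} $N$ from other covers of the family, and that is precisely the map $\tilde\pi(\rho^\square\otimes S)\twoheadrightarrow N$ you already have, not a map in the reverse direction. The $S$-torsion-freeness you invoke at the very end is exactly the ingredient the paper uses at the outset; once you use it directly on the kernel of the product map, the $\ddelta$ computation and the universal-cover comparison become unnecessary.
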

\begin{proof}
We can choose a surjection $W^\square \to \tilde{\pi}(\rho^\square \otimes S)$
by noting that $\tilde{\pi}(\rho^\square \otimes S)$ is co-Whittaker by its defining property, and applying \cite[Theorem 6.3]{helm2016whittaker} (recall that $S$ is canonically an $A_{[L,\pi]}$-algebra through the maps $A_{[L,\pi]} \xrightarrow{\mathrm{LL}} \Rsq \to S.)$

For the injection, note that we have maps
 $$\tilde{\pi}(\rho^\square \otimes S) \hookrightarrow \prod_{\pp \in P_{1,S}} \tilde{\pi}(\rho^\square \otimes S) \otimes_S S/\pp \xrightarrow{\sim} \prod_{\pp \in P_{1,S}} \tilde{\pi}(\rho^\square \otimes S/\pp).$$
 The first map is injective, since $P_{1,S}$ is Zariski dense and $\tilde{\pi}(\rho^\square \otimes S)$ is $S$-torsionfree by being co-Whittaker. The isomorphism follows by the assumption on $P_{1,S}.$
\end{proof}

\begin{prop}
\label{lower_bound_prop}
Let $\Spec S \to \Spec A$ be a closed embedding such that 
$$P_{1,S}:=\left\{ \pp \in P \cap \Spec S : \tilde{\pi}(\rho^\square \otimes S) \otimes_S S/\pp \cong \tilde{\pi}(\rho^\square \otimes S/\pp) \right\}$$ is Zariski dense inside $\Spec S.$ Then 
there is a surjection $\tilde{\Pi}_{\overline{\rho}} \to \tilde{\pi}(\rho^\square \otimes S).$
\end{prop}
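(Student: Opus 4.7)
The plan is to realise $\tilde{\pi}(\rho^\square \otimes S)$ as a quotient of $W^\square$ by a kernel that contains the kernel defining $\tilde{\Pi}_{\overline{\rho}}$, using Lemma \ref{Wsq_pi_injects_to_product_lemma} to obtain this description and Proposition \ref{interp_surjection_to_pirhox_prop} together with Corollary \ref{surjkernel_indep_cor} to align the relevant surjections.

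First, I invoke Lemma \ref{Wsq_pi_injects_to_product_lemma} to produce a surjection $\gamma : W^\square \twoheadrightarrow \tilde{\pi}(\rho^\square \otimes S)$ along with an injection $\iota: \tilde{\pi}(\rho^\square \otimes S) \hookrightarrow \prod_{\pp \in P_{1,S}} \tilde{\pi}(\rho^\square \otimes \kappa^+(\pp))$. For each $\pp \in P_{1,S}$, let $\gamma_\pp : W^\square \to \tilde{\pi}(\rho^\square \otimes \kappa^+(\pp))$ denote the $\pp$-component of $\iota \circ \gamma$. Since $\gamma$ is surjective and $\tilde{\pi}(\rho^\square \otimes S) \otimes_S S/\pp \cong \tilde{\pi}(\rho^\square \otimes \kappa^+(\pp))$ by the definition of $P_{1,S}$, each $\gamma_\pp$ is surjective, and moreover induces a nonzero map on $\ddelta$ (as it comes from a co-Whittaker module), so it is precisely a surjection of the type guaranteed by Proposition \ref{interp_surjection_to_pirhox_prop}(b).

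Next, I appeal to Corollary \ref{surjkernel_indep_cor}: the module $\tilde{\Pi}_{\overline{\rho}} = W^\square / \bigcap_{x \in P} \ker \beta_x$ depends only on the kernels $\ker \beta_x$, which by the uniqueness clause in Proposition \ref{interp_surjection_to_pirhox_prop}(b) are independent of the choice of surjection $\beta_x$. Hence I may assume that for $\pp \in P_{1,S}$, the chosen $\beta_\pp$ equals $\gamma_\pp$, and for $x \in P \setminus P_{1,S}$ leave $\beta_x$ arbitrary. With this choice,
$$\bigcap_{x \in P} \ker \beta_x \;\subseteq\; \bigcap_{\pp \in P_{1,S}} \ker \gamma_\pp \;=\; \ker\bigl(\iota \circ \gamma\bigr) \;=\; \ker \gamma,$$
where the last equality uses that $\iota$ is injective.

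Passing to quotients, this inclusion of kernels yields a well-defined surjection
$$\tilde{\Pi}_{\overline{\rho}} \;=\; W^\square / \bigcap_{x \in P} \ker \beta_x \;\twoheadrightarrow\; W^\square / \ker \gamma \;\cong\; \tilde{\pi}(\rho^\square \otimes S),$$
which is the desired map. The only potential obstacle is the matching of surjections in the alignment step, but this is dispatched precisely by the uniqueness-up-to-scalar statement of Proposition \ref{interp_surjection_to_pirhox_prop}(b) combined with Corollary \ref{surjkernel_indep_cor}; the remainder is a purely formal manipulation of kernels and quotients once the injectivity furnished by Lemma \ref{Wsq_pi_injects_to_product_lemma} is in hand.
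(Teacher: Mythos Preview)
Your proof is correct and follows essentially the same approach as the paper: both invoke Lemma~\ref{Wsq_pi_injects_to_product_lemma} to obtain the surjection $\gamma$ and injection $\iota$, extend the components $\gamma_\pp$ for $\pp \in P_{1,S}$ by arbitrary $\beta_x$ for $x \in P \setminus P_{1,S}$, and then use the uniqueness of kernels (Corollary~\ref{surjkernel_indep_cor} / Proposition~\ref{interp_surjection_to_pirhox_prop}) to identify the resulting image with $\tilde{\Pi}_{\overline{\rho}}$. The only cosmetic difference is that the paper phrases the final step as a comparison of images in a commutative square, whereas you write out the equivalent chain of kernel inclusions explicitly.
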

\begin{proof}
Using Lemma \ref{Wsq_pi_injects_to_product_lemma} to choose maps, we have the following diagram:
  $$\begin{tikzcd}
 W^\square \ar{d} & ~ & \prod_{x \in P} \tilde{\pi}(\rho^\square \otimes \kappa^+(x) ) \ar[two heads]{d}\\
 \tilde{\pi}(\rho^\square \otimes S) \ar{rr} & ~ & \prod_{x \in P_{1,S}} \tilde{\pi}(\rho^\square \otimes \kappa^+(x) )
  \end{tikzcd}
 $$
 where the right hand side arrow is the natural projection.
 
 Choosing arbitrary surjections $\beta_x : W^\square \to \tilde{\pi}(\rho^\square \otimes \kappa^+(x) )$ for $x \in P \setminus P_1$ allows us to lift $\iota \circ \sigma$ to a surjective map $W^\square \to \prod_{x \in P} \tilde{\pi}(\rho^\square \otimes \kappa^+(x) )$ that makes the diagram commute. It has image isomorphic to $\tilde{\Pi}_{\overline{\rho}}$ by definition of the latter.
 The claim now follows from the injectivity of $\iota$ and the commutativity of the diagram by comparing the images of $W^\square$.
 \end{proof}

\section{The deformation ring and minimal lifts}
\subsection{Setup on the Galois side}
We introduce some notation for, and recall some facts about the Galois theory of $F_\ell$.
Its absolute Galois group $G_{F_\ell}$ is a pro-finite group.
We have the inertia subgroup $\Ifl \le G_{F_\ell}$ which is a normal subgroup with quotient generated by the class of a Frobenius element $\phi.$ $\Ifl$ has a topologically cyclic pro-finite quotient isomorphic to $(\widehat{\ZZ},+),$ with corresponding kernel called the wild inertia subgroup. Therefore there is a unique subgroup $\Pfl \le \Ifl$ with quotient isomorphic to $(\Zp,+).$ Consequently, $\Pfl$ is also a normal subgroup of $G_{F_\ell}$. We call representations factoring through $G_{F_\ell}/\Pfl$ $p$-tame.

Choose topological generators $(\phi,\sigma)$ for the topological group $G_{F_\ell}/\Pfl$ satisfying $\phi \sigma \phi^{-1}=\sigma^q$ where $q$ is the cardinality of the residue field of $F_\ell.$ (Note that a disadvantage of our notational choices is that here, $q$ is a power of $\ell,$ not of $p.$)

\begin{defn}
For an irreducible representation $\overline{\tau}$ of the group $\Pfl$ over $k,$ we define $\Got$ to be the stabiliser subgroup
$$\left\{g \in G_{F_\ell} : {}^g(\overline{\tau}) \cong \overline{\tau} \right\}$$
where ${}^g(\overline{\tau})$ denotes the conjugate representation $h \mapsto \overline{\tau}(g h g^{-1})$ of $\Pfl$ over $k.$
\end{defn}

\begin{lemma}[{\cite[Lemma 2.4.11]{CHT}}]
$\overline{\tau}$ lifts uniquely to a representation $\tau$ of $\Pfl$ with coefficients in $W(k)$ Moreover, this $\tau$ extends (non-uniquely) to a representation of $\Got$ with coefficient in $W(k).$
\end{lemma}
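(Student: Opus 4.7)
The plan is to split the lemma into its two assertions. For the unique lift from $k$ to $W(k)$, I would first observe that $\Pfl$ fits in an exact sequence
\[
1 \to W \to \Pfl \to \prod_{r \neq \ell, p} \ZZ_r \to 1,
\]
where $W$ is the wild inertia subgroup (which is pro-$\ell$) and the quotient is the tame inertia of $\Ifl$ modulo its $\Zp$-direct summand; in particular, $\Pfl$ has trivial pro-$p$ part, so the continuous representation $\overline{\tau}$ factors through a finite quotient $H$ of $\Pfl$ of order coprime to $p$. Since $|H|$ is then a unit in the complete local ring $W(k)$, idempotent lifting along $W(k) \twoheadrightarrow k$ yields an equivalence between $W(k)[H]$-modules and $k[H]$-modules given by reduction mod $p$. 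This produces a $W(k)$-representation $\tau$ of $\Pfl$ with $\tau \otimes_{W(k)} k \cong \overline{\tau}$, unique up to isomorphism, and which inherits absolute irreducibility from $\overline{\tau}$.

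For the extension to $\Got$, I would use an intertwining/cocycle argument. For each $g \in \Got$ we have ${}^g\overline{\tau} \cong \overline{\tau}$ by definition, so by the uniqueness just established also ${}^g\tau \cong \tau$. Thus there is an intertwiner $A(g) \in \GL_n(W(k))$, determined up to a scalar in $W(k)^\times$ by Schur's lemma. Combining $\tau$ on $\Pfl$ with the $A(g)$'s along a continuous set-theoretic section of $\Got \to \Got/\Pfl$ gives a continuous set-theoretic lift of this group extension; its failure of multiplicativity descends to a continuous $2$-cocycle $c \in Z^2(\Got/\Pfl, W(k)^\times)$, and the existence of the sought extension is equivalent to the vanishing of $[c]\in H^2(\Got/\Pfl, W(k)^\times)$.

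To dispose of this obstruction, I would exploit the very concrete structure of $\Got/\Pfl$ as a closed subgroup of the topologically two-generated pro-finite group $G_{F_\ell}/\Pfl$, which is generated by $\phi$ and $\sigma$ subject to $\phi \sigma \phi^{-1} = \sigma^q$ and is an extension of the procyclic group $\widehat{\ZZ}\cdot\phi$ by $\Zp\cdot\sigma$. I would extend $\tau$ in two stages: first to the preimage in $\Got$ of $\langle\sigma\rangle \cap \Got/\Pfl$, which is procyclic over $\Pfl$, so any single choice of an intertwiner $S$ for the $\sigma$-conjugation action on $\tau$ determines a continuous representation by $\tau(\tilde{\sigma}^n h) = S^n \tau(h)$; then to the whole of $\Got$ by choosing an intertwiner $F$ for the Frobenius action and scaling it in $W(k)^\times$ so that the braid identity $F S F^{-1} = S^q$ is achieved modulo the image of $\tau|_\Pfl$. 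The remaining obstruction lives in the cohomology of a procyclic quotient with coefficients in $W(k)^\times$, which vanishes on account of $W(k)^\times$ being divisible in the relevant sense.

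The hard part will be the second step: verifying that the scalar ambiguities left over from the Schur's-lemma choices of $S$ and $F$ can be chosen consistently with the topology and with the braid relation $\phi\sigma\phi^{-1} = \sigma^q$. Concretely this reduces to solving a single equation in $W(k)^\times$ determined by the pro-metacyclic structure of $\Got/\Pfl$ and by how $\sigma$ acts on the space of $\tau$; the solvability is the real content of the lemma. Non-uniqueness of the extension is then immediate, as twisting by any continuous character $\Got/\Pfl \to W(k)^\times$ yields another valid extension, matching the statement.
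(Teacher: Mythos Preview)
The paper does not supply its own proof of this lemma; it simply records the citation to \cite[Lemma 2.4.11]{CHT}. So there is nothing to compare at the level of arguments --- but since you have written out an actual proof sketch, let me comment on it directly.

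Your treatment of the first assertion is correct and complete: $\Pfl$ has no pro-$p$ quotient, so $\overline{\tau}$ factors through a finite group of order prime to $p$, and the lift to $W(k)$ is then immediate and unique.

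For the extension to $\Got$, your Clifford-theoretic setup is the right one, and is in the spirit of what \cite{CHT} does. However, two steps are not justified as written:
\begin{itemize}
\item When you extend to the preimage of $\langle\sigma\rangle \cap \Got/\Pfl$ via the formula $\tau(\tilde{\sigma}^n h) = S^n \tau(h)$, note that this subgroup is isomorphic to $\Zp$, not $\ZZ$, so you must be able to form $S^a$ for $a \in \Zp$ and obtain a continuous map. An arbitrary intertwiner $S \in \GL_n(W(k))$ will not allow this. You need to argue that $S$ can be chosen to have finite order (using that the image of $\tau$ is finite of prime-to-$p$ order, that conjugation by $S$ is an automorphism of this finite group, and that Schur's lemma then makes some power of $S$ a scalar which can be normalised away because $k$ is algebraically closed).
\item Your final sentence, that the remaining obstruction ``vanishes on account of $W(k)^\times$ being divisible in the relevant sense,'' is too vague to be a proof. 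In fact $W(k)^\times \cong k^\times \times (1 + pW(k))$ is \emph{not} $p$-divisible (for instance $1+p$ has no $p$-th root), so you must identify exactly which equation in $W(k)^\times$ you are solving and why it has a solution. The braid relation forces $FSF^{-1} = \alpha S^{q}$ for some $\alpha \in W(k)^\times$; rescaling $S$ by $\mu$ replaces $\alpha$ by $\alpha\mu^{1-q}$, and since $q$ is a power of $\ell$ you need to know that $q-1$ acts appropriately --- which again uses the finite-order normalisation of $S$ rather than naked divisibility.
\end{itemize}
In \cite{CHT} these points are handled by an explicit construction exploiting the presentation of $\Got/\Pfl$ by two generators and one relation, together with the finiteness of the image of $\tau$. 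Your outline would become a proof once you insert the finite-order argument for $S$ and $F$ and track the resulting scalar equation.
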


\begin{defn}
We choose precisely one $\overline{\tau}$ from each $G_{F_\ell}$-conjugacy class of irreducible representations of $\Pfl$ over $k$ that appear as an irreducible $k[\Pfl]$-subquotient of $\overline{\rho}$, and collect them in a set $\overline{T}.$ According to the Lemma, we choose an extension to $\Got$ of lift $\tau$ for each $\overline{\tau} \in \overline{T}$ and denote by $T$ the set of our chosen extensions $\tau$ to $\Got.$ Note that $T$ is a finite set of cardinality at most $\dim_k \overline{\rho}.$
\end{defn}

For any $\tau \in T$ and a $W(k)[G_{F_\ell}]$-module $M,$ we have an action of $\Got/\Pfl$ on $\Hom_{\Pfl}(\tau,M)$. By \cite[Lemma 2.4.12]{CHT}, we have
$$\rho \cong \bigoplus_{\tau \in T} \Ind_{G_{\overline{\tau}}}^{G_{F_\ell}}
\left( \Hom_{W(k)[\Pfl]}(\tau, \rho) \otimes_{W(k)} \tau \right)$$
for all $W(k)[G_\ell]$-modules $\rho$ such that all simple $k[\Pfl]$-subquotients of $\rho$ are contained in $\overline{T}.$

\subsection{Ramification and induction}

In this subsection, let $K$ be a characteristic $0$ field of fractions of a complete Noetherian local integral $W(k)$-algebra (so that \cite[Proposition 4.1.6]{eh} applies). Note that $\kappa(\pp)$ satisfies the requirements for any $\pp \in \Spec \Rsq[p^{-1}].$ Fix some $\tau \in T.$

\begin{lemma}
\label{restriction_depends_on_restriction_lemma}
For each $\tau \in T,$ there is an open subgroup $U_{\tau} \le I_{F_\ell}$ such that 
$$\Res^G_{U} \Ind_{\Got}^{G_{F_\ell}} \psi \otimes_{W(k)} \tau$$
depends only on $\Res^{\Got}_{U \cap \Got} \psi$ up to isomorphism for any ($p$-tame) representation $\psi : \Got/\Pfl \to \GL_r(K)$ and open subgroup $U \subseteq U_\tau.$
\end{lemma}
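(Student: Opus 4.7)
The approach I would take is to apply Mackey's decomposition for restriction-of-induction, choose $U_\tau$ so that this decomposition simplifies as much as possible, and then analyze each summand using the $p$-tameness of $\psi$. As a first step, I note that $\Got$ has finite index in $G_{F_\ell}$: the $G_{F_\ell}$-orbit of $[\overline{\tau}]$ under conjugation consists of the isomorphism classes of irreducible $k[\Pfl]$-subquotients of $\overline{\rho}$, of which there are only finitely many. The normal core $N := \bigcap_{g \in G_{F_\ell}} g \Got g^{-1}$, being the kernel of the permutation action $G_{F_\ell} \to \mathrm{Sym}(G_{F_\ell}/\Got)$, is therefore an open normal subgroup of $G_{F_\ell}$ of finite index, and I would define $U_\tau := N \cap \Ifl$, which is then an open subgroup of $\Ifl$ contained in $\Got$.

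Given any open $U \subseteq U_\tau$, the inclusion $U \subseteq N \subseteq \Got$ immediately gives $U \cap \Got = U$, and for every $g \in G_{F_\ell}$ we also have $g^{-1} U g \subseteq g^{-1} N g = N \subseteq \Got$, so $U \subseteq g \Got g^{-1}$. Consequently the double cosets $U \backslash G_{F_\ell} / \Got$ collapse onto $G_{F_\ell}/\Got$, and Mackey's formula simplifies to
$$\Res^{G_{F_\ell}}_U \Ind_{\Got}^{G_{F_\ell}}(\psi \otimes \tau) \;\cong\; \bigoplus_{g \in G_{F_\ell}/\Got} \bigl({}^g\psi|_U\bigr) \otimes_{W(k)} \bigl({}^g\tau|_U\bigr).$$
Since $\tau$ is fixed, the remaining task is to show that each ${}^g\psi|_U$ depends, up to isomorphism, only on $\psi|_U = \psi|_{U \cap \Got}$.

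For this step I would exploit that $\psi$ is $p$-tame, i.e.\ factors through $\Got/\Pfl$. Setting $\bar U := U\Pfl/\Pfl$ inside $\Ifl/\Pfl \cong \ZZ_p$, the subgroup $\bar U$ is open in $\ZZ_p$ (since $U$ has finite index in $\Ifl$), hence of the form $p^b\ZZ_p$ for some $b \ge 0$. The crucial observation is that $G_{F_\ell}$ acts on $\Ifl/\Pfl \cong \ZZ_p$ through $G_{F_\ell}/\Ifl$, with Frobenius acting as multiplication by $q$; since $q$ is a power of $\ell \neq p$ it is a unit in $\ZZ_p$, hence preserves every closed subgroup $p^b\ZZ_p$. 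Therefore $\bar U$ is $G_{F_\ell}$-stable, and conjugation by any $g$ induces an automorphism $\gamma_{g^{-1}}$ of $\bar U$ depending only on $g$. Writing $\psi|_U$ as the composite $U \twoheadrightarrow \bar U \xrightarrow{\bar\psi} \GL_r(K)$, the representation ${}^g\psi|_U$ is identified with $U \twoheadrightarrow \bar U \xrightarrow{\gamma_{g^{-1}}} \bar U \xrightarrow{\bar\psi} \GL_r(K)$, so ${}^g\psi|_U$ is determined up to isomorphism by $\bar\psi$, which is in turn determined up to isomorphism by $\psi|_U$.

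The main obstacle is making the right choice of $U_\tau$ so that both simplifications go through: it must lie in the normal core of $\Got$ to ensure that the Mackey decomposition collapses to one summand per coset of $\Got$, while the conjugates ${}^g\psi|_U$ being controllable by $\psi|_{U \cap \Got}$ rests on the prime-to-$p$ structure of $\Ifl/\Pfl$ (the unit action of Frobenius on $\ZZ_p$), which renders $\bar U$ stable under $G_{F_\ell}$-conjugation.
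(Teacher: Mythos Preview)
Your proof is correct and shares its core mechanism with the paper's argument: both apply the Mackey decomposition and then exploit that conjugation by $G_{F_\ell}$ on $\Ifl/\Pfl\cong\Zp$ is multiplication by the unit $\chi(g)$, so every open subgroup $p^b\Zp$ is stable and the conjugated restrictions are determined by the unrestricted one.

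The genuine difference lies in the choice of $U_\tau$ and how the bookkeeping is organised. The paper selects $U_\tau$ via Grothendieck's monodromy theorem applied to $\tau$, so that $\tau|_{U_\tau}$ (and hence $(\psi\otimes\tau)|_{U_\tau}$) already factors through the $p$-tame quotient $U_0$; it then handles $\psi\otimes\tau$ as a single package and leaves the residual inductions $\Ind_{U\cap g\Got g^{-1}}^{U}$ in the Mackey formula. You instead take $U_\tau$ to be inertia intersected with the normal core of $\Got$, which is more elementary (no Weil--Deligne input for $\tau$) and makes every Mackey summand an honest restriction with index set exactly $G_{F_\ell}/\Got$; you then split the tensor and argue for ${}^g\psi|_U$ alone. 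Your route is slightly cleaner at the group-theoretic level, while the paper's choice dovetails with the next lemma, where the explicit monodromy $N_\tau$ of $\tau|_{U_\tau}$ is used to compute $\rk(N_\psi\otimes 1+1\otimes N_\tau)^m$. One minor imprecision: the $G_{F_\ell}$-orbit of $[\overline{\tau}]$ is contained in, but need not equal, the set of irreducible $k[\Pfl]$-subquotients of $\overline{\rho}$; your finiteness conclusion is unaffected.
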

\begin{proof}
Note that $\Got$ is the absolute Galois group of some finite extension of $F_\ell,$ so \cite[Proposition 4.1.6]{eh} applies to it as well. In particular there is an open subgroup $U \le \Ifl$ and a nilpotent matrix $N_\tau \in M_{r \times r}(K)$ such that $\tau(u)=\exp(t_p(u) N_\tau)$ for all $u \in U$ (which implies the same claim for all open subgroups of $U$.)\\
Now let $U$ be such an open subgroup, and let $U_0$ be the projection of $U$ to $G_{F_\ell}/\Pfl.$ Since $\Ifl/\Pfl$ is a topologically cyclic pro-$p$ group, $U_0$ is the unique subgroup of $I_{F_\ell}/\Pfl$ of a given index, and so $U_0$ is normal in $G_{F_\ell}/\Pfl.$ Then $\Res_{U}^{\Got} \left( \psi \otimes_{W(k)} \tau \right)$ factors through $U_0,$ and so we have ${}^g(\psi \otimes_{W(k)} \tau|_{U})= (\psi \otimes_{W(k)} \tau|_{U})^{\chi(g)}$ where we use the notation ${}^g (\psi \otimes_{W(k)} \tau)=(u \mapsto (\psi \otimes_{W(k)} \tau)(g u g^{-1})),$ and $\chi : G_{F_\ell} \to \Zp^\times$ is the unramified character sending the Frobenius to $q.$
Using the double coset formula for the restriction of an induced representation,
\begin{align*}
    \Res_{U}^{G_{F_\ell}} \Ind_{\Got}^{G_{F_\ell}} \left( \psi \otimes_{W(k)} \tau \right)=&
    \bigoplus_{U g \Got} \Ind_{U \cap g \Got g^{-1}}^{U} \quad   {}^g(\psi \otimes_{W(k)} \tau)=\\
    &\bigoplus_{U g \Got} \Ind_{U \cap g \Got g^{-1}}^{U}    \quad  (\psi \otimes_{W(k)} \tau)^{\chi(g)}.
\end{align*}
By inspection, the last line depends only on the restriction of $\psi \otimes_{W(k)} \tau$ to $U,$ and so setting our $U$ as $U_{\tau}$ satisfies the claim.
\end{proof}

\subsection{Partitions and unions of components}
Continuing from the previous subsection, let $K$ be a characteristic $0$ field of fractions of a complete Noetherian local integral $W(k)$-algebra (so that \cite[Proposition 4.1.6]{eh} applies).

The following definition and its properties are well-known.
\begin{defn}
If $N,N' \in M_{n \times n}(K)$ are nilpotent matrices, we say $N \le N'$ if and only if
$$\rk(N^i) \le \rk(N'^i)$$ for all $i = 1 \dots n-1.$ This relation only depends on $N$ and $N'$ up to conjugation over $\overline{K},$ and defines a partial order on $\overline{K}$-conjugacy classes of nilpotent matrices. 
\end{defn}

Let $\mu$ be a partition of $m \in \NN$ i.e. a non-increasing sequence of positive integers $\mu_1,\dots,\mu_r$ summing to $m.$ We extend $\mu$ into an infinite sequence by setting $\mu_i=0$ if $i>r$ for notational convenience.
Our reference for facts about partitions is \cite{brylawski}.

For each partition $\mu$ and field $K,$ there is a unique nilpotent $n \times n$ matrix $N_\mu(K)$ in Jordan normal form such that $\mu_1,\dots,\mu_r$ are the length of its Jordan blocks from left to right. We say that $\mu$ dominates a nilpotent matrix $N \in M_{m \times m}(K)$ and write $N \le \mu$ if $N \le N_\mu(K).$

\begin{defn}[Dominance order on partitions]
If $\mu,\nu$ are partitions of an integer $m \in \NN,$ we say that $\mu \le \nu$ if and only if $$\sum_{i=1}^k \mu_i \le \sum_{i=1}^k \nu_i$$ for any integer $k \ge 1.$ We write $\Part(m)$ for this partially ordered set.
\end{defn}

Note that any nilpotent $m \times m$ matrix over $K$ is conjugate to some $N_\mu(K)$ over $\overline{K},$ (in fact, even over $K$) so $\mu \mapsto [N_\mu(\overline{K})]$ is an order-preserving bijection between partitions of $m$ and conjugacy classes of nilpotent matrices over $\overline{K}.$

\begin{defn}
If $N$ is any nilpotent $m \times m$ matrix (resp. a conjugacy class of matrices) over $K$, 
there is a unique partition $\nu$ of $m$ such that $N$ is conjugate to (resp. contains) $N_{\nu}(K)$ over $K.$ We write $\nu(N)$ for this unique partition.
\end{defn}

There is an order-reversing involution known as \emph{conjugation} on partitions of $m$ given by $\mu \mapsto \mu^\top$ such that
$$\mu^\top_i=|\{j \in \{1,\dots, r\} :  \mu_j \ge i \}|.$$

The following is well-known.
\begin{lemma}
If $\mu,\nu$ are partitions of $m,$ then $$N_\mu(K) \le N_\nu(K) \Leftrightarrow \mu \le \nu.$$
\end{lemma}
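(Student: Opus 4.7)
The plan is to translate both sides of the claimed equivalence into statements about partial sums of the conjugate partitions $\mu^\top,\nu^\top$ of $m$, at which point the equivalence becomes formal.

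First I would compute $\rk(N_\mu(K)^i)$ directly from the Jordan block structure. A single Jordan block of size $\mu_j$ has $i$-th power of rank $\max(\mu_j-i,0)$, so summing over the blocks gives
$$\rk\bigl(N_\mu(K)^i\bigr)=\sum_{j=1}^{r}\max(\mu_j-i,0).$$
I would then rewrite the right-hand side by counting lattice points $(j,s)$ with $i<s\le\mu_j$: by the very definition of conjugation, $|\{j:\mu_j\ge s\}|=\mu^\top_s$, so
$$\rk\bigl(N_\mu(K)^i\bigr)=\sum_{s>i}\mu^\top_s=m-\sum_{s=1}^{i}\mu^\top_s.$$
This holds over any field $K$, in particular the same formula is valid for $N_\nu(K)$.

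With this rank formula in hand, the defining condition $N_\mu(K)\le N_\nu(K)$ becomes $\rk(N_\mu^i)\le\rk(N_\nu^i)$ for $1\le i\le n-1$, which by the identity above is equivalent to
$$\sum_{s=1}^{i}\mu^\top_s\ge\sum_{s=1}^{i}\nu^\top_s\quad\text{for all }i\ge 1,$$
i.e.\ $\nu^\top\le\mu^\top$ in the dominance order on $\Part(m)$. Since conjugation on partitions of a fixed integer is an order-reversing involution (recalled in the text and proved in \cite{brylawski}), this last inequality is in turn equivalent to $\mu\le\nu$.

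The only genuinely non-trivial input is the order-reversing property of conjugation, which is already cited. Beyond that, the main thing to be careful about is bookkeeping: the inequality $N_\mu\le N_\nu$ requires ranks of powers to \emph{grow}, which corresponds to partial sums of $\mu^\top$ being \emph{smaller} than those of $\nu^\top$, and only after applying the order-reversing involution does this recover the stated $\mu\le\nu$. Once the rank formula is in place, no further case analysis is needed.
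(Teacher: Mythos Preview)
Your argument is correct and follows essentially the same route as the paper: both compute that $\rk(N_\mu(K)^i)=m-\sum_{s\le i}\mu^\top_s$ (the paper phrases this via $\dim_K\ker N_\mu(K)^i$ and rank--nullity, you compute ranks directly from the Jordan blocks), reduce the matrix inequality to $\mu^\top\ge\nu^\top$, and then invoke the order-reversing property of conjugation. One small slip: in your final commentary paragraph you write that the partial sums of $\mu^\top$ end up \emph{smaller} than those of $\nu^\top$, but your own displayed inequality (correctly) has them \emph{larger}; the actual proof is unaffected.
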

\begin{proof}
Since $\rk N=m-\dim_K \ker N$ for any $m \times m$ matrix $N,$ 
$N_\mu(K) \le N_\nu(K)$ is equivalent to $\dim_K \ker N_\mu(K)^i \ge \dim_K \ker N_\nu(K)^i$ for all $i \in \NN.$ However, $(\dim_K \ker N_\mu(K)^i)-(\dim_K \ker N_\mu(K)^{i-1})$ is the number of blocks of $N_\mu(K)$ of length at least $i$ (and similarly for $\nu$), hence the claim is equivalent to $\mu^{\top} \ge \nu^{\top}$ where $-^\top$ denotes the conjugate partition. Since $-^\top$ is an order-reversing involution on partitions of $m$, this implies the claim.
\end{proof}

The following is also well-known.
\begin{prop}
For a partition $\mu$ of $m$, the $N \le \mu$ condition cuts out a Zariski closed subset of the moduli space of $m \times m$ matrices over $K$, which is the Zariski closure of the closed points corresponding to matrices conjugate to $N_\mu(K)$ over $\overline{K}.$
\end{prop}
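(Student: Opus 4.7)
The plan is to separate the statement into two parts: first that $\{N \le \mu\}$ is Zariski closed in the space of $m \times m$ matrices, and second that this closed locus coincides with the Zariski closure of the $\overline{K}$-conjugates of $N_\mu(K)$.

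First I would handle the closedness. By definition, the condition $N \le \mu$ amounts to the finite list of inequalities $\rk(N^i) \le \rk(N_\mu(K)^i)$ for $i = 1, \dots, m-1$, together with the nilpotency condition $N^m = 0$. Each rank upper bound $\rk(N^i) \le r_i$ is cut out by the simultaneous vanishing of all $(r_i + 1) \times (r_i + 1)$ minors of $N^i$; since the entries of $N^i$ are polynomials in the entries of $N$, each such condition is closed, and so is their intersection together with $N^m = 0$. This part is routine.

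Second, I would verify the identification with the orbit closure. One inclusion is immediate and formal: any matrix conjugate over $\overline{K}$ to $N_\mu(K)$ has the same ranks of powers (rank being invariant under conjugation), hence lies in $\{N \le \mu\}$, and because the latter is Zariski closed it contains the Zariski closure of the $\overline{K}$-conjugacy class of $N_\mu(K)$.

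The reverse inclusion is the step I expect to carry the real content. The plan is to invoke the classical description of orbit closures in the nilpotent cone (Gerstenhaber--Hesselink, see also \cite{brylawski}): the closure of the $\GL_m(\overline{K})$-orbit of $N_\mu(\overline{K})$ consists of precisely those nilpotent matrices $N$ with $\nu(N) \le \mu$, which by the previous lemma coincides with $\{N \le \mu\}$. A self-contained proof would proceed by induction along covering relations of the dominance order on $\mathrm{Part}(m)$ — since any $\nu < \mu$ can be reached from $\mu$ by a chain of elementary box-moves — and for each elementary move exhibit an explicit one-parameter family of nilpotent matrices whose generic fibre is conjugate to $N_\mu$ and whose special fibre is conjugate to the degenerated partition. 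Since this is a well-known and purely combinatorial-geometric result I would simply cite \cite{brylawski}, which is already referenced above for the dominance order, rather than reprove it.
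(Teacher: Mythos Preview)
Your proposal is correct. The paper states this proposition as ``also well-known'' and gives no proof in the compiled version; a commented-out sketch in the source records only the closedness argument via vanishing of minors of $N^i$, which is exactly your first step, so your treatment of the orbit-closure identification (the formal inclusion plus the Gerstenhaber--Hesselink direction) is strictly more than what the paper provides.
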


Using \cite[Proposition 4.1.6]{eh}, the following is well-defined.
\begin{defn}
Let $U$ be any open subgroup of $G_{F_\ell},$ and let $\rho : U \to \GL_m(K)$ be a continuous Galois representation, which extends to a continuous representation of $G_{F_\ell}.$
If we let $(\rho',N)$ denote the Weil-Deligne representation attached to $\rho,$ we call the matrix $N$ \emph{the monodromy of} $\rho.$ We write $\nu(\rho)$ for the partition $\nu(N)$ of $m.$
The conjugacy class of the monodromy of $\rho$ and so $\nu(\rho)$ depend on $\rho$ only up to isomorphism, and do not change when restricting $\rho$ to an open subgroup $U' \subseteq U.$
\end{defn}

\begin{lemma}
\label{monotonicity_lemma}
For a representation $\psi : \Got/\Pfl \to \GL_r(K)$ with attached Weil-Deligne representation $(\psi',N_\psi),$ let $N_{\psi,\tau}$ be such that $(\rho_{\psi,\tau},N_{\psi,\tau})$ is the Weil-Deligne representation attached to $\Ind_{\Got}^{G_{F_\ell}} \psi \otimes_{W(k)} \tau.$ Then the conjugacy class of $N_{\psi,\tau}$ over $\overline{K}$ depends only on the conjugacy class of $N_\psi.$ Equivalently,
$$\nu \left(   \Ind_{\Got}^{G_{F_\ell}} \psi \otimes_{W(k)} \tau  \right)$$
depends only $\nu(\psi).$
Moreover, this dependence is monotonic in the partial order $\le.$
\end{lemma}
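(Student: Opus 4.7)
The plan is to derive an explicit formula for $N_{\psi,\tau}$ up to conjugacy by unwinding the Mackey decomposition already used in the proof of Lemma~\ref{restriction_depends_on_restriction_lemma}, and then read off both claims from it.

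First I would settle well-definedness. By \cite[Proposition 4.1.6]{eh} applied to $\psi$ and to $\Ind_{\Got}^{G_{F_\ell}}(\psi \otimes_{W(k)} \tau)$, the $\overline{K}$-conjugacy classes of $N_\psi$ and of $N_{\psi,\tau}$ are each determined by the restriction of the corresponding representation to a sufficiently small open subgroup of $\Ifl$. Combined with Lemma~\ref{restriction_depends_on_restriction_lemma}, the class of $N_{\psi,\tau}$ depends only on the restriction of $\psi$ to some open subgroup of $\Got \cap \Ifl$, and this restriction is itself determined by the conjugacy class of $N_\psi$. Hence $N_\psi \mapsto N_{\psi,\tau}$ descends to conjugacy classes.

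To get a usable formula, I would choose an open $U \le \Ifl$ contained in $U_\tau \cap \bigcap_{g \in G_{F_\ell}} g \Got g^{-1}$ and small enough that the exponential parameterisations of $\psi|_{U \cap \Got}$ and $\tau|_U$ in terms of $N_\psi$ and $N_\tau$ both hold. Such a $U$ exists and is open: the intersection is finite because $d := [G_{F_\ell} : \Got]$ is finite, which in turn holds because $\overline{\tau}$ factors through a finite quotient of $\Pfl$ and so has a finite $G_{F_\ell}$-conjugation orbit. With $U$ normal in $G_{F_\ell}$ and contained in $\Got$, every double coset $U g \Got$ collapses to a single coset $g \Got$ and each intersection $U \cap g \Got g^{-1}$ equals $U$, so the Mackey formula recalled in the proof of Lemma~\ref{restriction_depends_on_restriction_lemma} simplifies to
\[
\Res_U^{G_{F_\ell}} \Ind_{\Got}^{G_{F_\ell}}(\psi \otimes_{W(k)} \tau) \cong \bigoplus_{g \in G_{F_\ell}/\Got} (\psi \otimes_{W(k)} \tau|_U)^{\chi(g)}.
\]
By the standard Leibniz rule for tensor products of Weil-Deligne representations, the monodromy of $\psi \otimes \tau|_U$ is $N_\psi \otimes I + I \otimes N_\tau$, and twisting by $\chi(g) \in \Zp^\times \subset K^\times$ scales the monodromy by $\chi(g)$. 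Since rescaling a nilpotent matrix by a non-zero scalar does not change its $\overline{K}$-conjugacy class (conjugate by a suitable diagonal matrix inside each Jordan block), reading off the monodromy on both sides yields
\[
N_{\psi,\tau} \sim_{\overline{K}} \bigl( N_\psi \otimes I + I \otimes N_\tau \bigr)^{\oplus d},
\]
which depends on $N_\psi$ only through its conjugacy class.

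For monotonicity, it then suffices to check that each of the two operations $M \mapsto M \otimes I + I \otimes N_\tau$ and $M \mapsto M^{\oplus d}$ preserves the order $\le$ on conjugacy classes of nilpotent matrices. The first is continuous and equivariant for conjugation via $g \mapsto g \otimes I$, hence carries Zariski orbit closures into orbit closures; the second is obviously order-preserving. Using the characterisation of $\le$ by orbit closures recalled just above the lemma, $N_\psi \le N_{\psi'}$ therefore implies $N_{\psi,\tau} \le N_{\psi',\tau}$. The main technical obstacle I anticipate is choosing $U$ so that the Mackey decomposition collapses cleanly and the exponential parameterisations of $\psi$ and $\tau$ are simultaneously valid; once that single $U$ is secured, the rest is a direct computation.
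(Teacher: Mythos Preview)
Your argument is correct and follows the same overall architecture as the paper: both reduce via Mackey (Lemma~\ref{restriction_depends_on_restriction_lemma}) to the statement that the monodromy of the induced representation is, up to nonzero scalars and direct sums, built from $N_\psi \otimes I + I \otimes N_\tau$, so that the only substantive point is monotonicity of $N_\psi \mapsto N_\psi \otimes I + I \otimes N_\tau$. One small slip: you assert ``with $U$ normal in $G_{F_\ell}$'' but never arrange this, and in fact you do not need it---your containment $U \subseteq \bigcap_g g\Got g^{-1}$ already forces $g^{-1}Ug \subseteq \Got$ for every $g$, which is exactly what collapses the double cosets and the Mackey induction.

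Where you genuinely diverge from the paper is in the monotonicity step. The paper proves it by an explicit rank computation: putting $N_\psi$ and $N_\tau$ in Jordan form, expanding $(N_\psi \otimes 1 + 1 \otimes N_\tau)^m$ by the binomial theorem, and observing that the summands $N_\psi^i \otimes N_\tau^{m-i}$ lie in linearly independent ``superdiagonal'' subspaces of $\End(\psi)\otimes\End(\tau)$, so that
\[
\rk\bigl(N_\psi \otimes 1 + 1 \otimes N_\tau\bigr)^m \;=\; \sum_{i=0}^m (\rk N_\psi^i)(\rk N_\tau^{m-i}),
\]
which is visibly monotone in the sequence $(\rk N_\psi^i)_i$. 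Your orbit-closure argument is shorter and more conceptual: the affine map $M \mapsto M\otimes I + I\otimes N_\tau$ is a morphism of varieties intertwining $\GL_r$-conjugation with $\GL_{rs}$-conjugation via $g \mapsto g\otimes I$, hence carries orbit closures into orbit closures, which is precisely the closure characterisation of $\le$ recalled before the lemma. The paper's approach has the advantage of yielding an explicit formula for the ranks (hence for $\nu$ of the induced representation) in terms of $\nu(\psi)$ and $\nu(\tau)$; yours avoids all bookkeeping but gives no such formula. Since the paper never uses the explicit formula elsewhere, your route is a perfectly good substitute.
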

\begin{proof}
Take $U_{\tau}$ as in Lemma \ref{restriction_depends_on_restriction_lemma}. On one hand, $\psi$ is $p$-tame and so $N_\psi$ determines the entire representation $\psi|_{U_{\tau}}$ (since it determine $\psi$ on the whole of $I_{F_\ell}$). On the other hand, the restriction to $U_{\tau}$ is sufficient to determine the monodromy part of the Weil-Deligne representation.\\
To prove monotonicity, first note that $(\psi \otimes_{W(k)} \tau)^{x}$ for $x \in \ZZ^\times_p$ has the same monodromy as $\psi \otimes_{W(k)} \tau,$ so using the double coset formula in Lemma \ref{restriction_depends_on_restriction_lemma}, it is sufficient to show that the monodromy of $\psi \otimes_{W(k)} \tau$ depends monotonically on $N_\psi.$\\
From the theory of Weil-Deligne representation we know that the monodromy attached to $\psi \otimes_{W(k)} \tau$ is
$N_\psi \otimes 1 + 1 \otimes N_\tau \in \End_K(\psi \otimes_{W(k)} \tau)$ where $N_\tau$ is the monodromy attached to $K \otimes_{W(k)} \tau.$ Fix a basis for both $\psi$ and $\tau$. By conjugation over $\overline{K},$ we may assume that $N_\psi$ and $N_\tau$ are both in Jordan normal form. For $a \in \NN,$ write $V_a$ (resp. $W_a$) for the space of matrices $M$ in $\End_K(\psi)$ (resp. $\End_K(\tau)$) such that all coefficients apart from $M_{0,a},M_{1,a+1},\dots,M_{\dim \psi-a,\dim \psi}$ (resp. $M_{0,a},\dots,M_{\dim \tau - a,\dim \tau}$) are $0$. Then the sum of subspaces
$$\sum_{a,b \in \NN} V_a \otimes W_b$$
is a direct sum, and it is also naturally a direct summand of $\End_K(\psi \otimes_{W(k)} \tau).$
Then for all $m \in \NN,$ the matrix
$$(N_\psi \otimes 1 + 1 \otimes N_\tau)^m=\sum_{i=0}^m \binom{m}{i}  (N_\psi \otimes 1)^i (1 \otimes N_\tau)^{m-i}=\sum_{i=0}^m \binom{m}{i}  N_\psi^i \otimes N_\tau^{m-i}$$
has its $i$th summand lying in $V_i \otimes W_{m-i}$ and therefore its rank is the sum of the ranks of the matrices in the sum. We ignore the factors $\binom{m}{i}$ (as $\mathrm{char} K=0$) and we compute
$$\rk (N_\psi^i \otimes N_\tau^{m-i})=(\rk N_\psi^i)(\rk N_\tau^{m-i}),$$
which yields
$$\rk (N_\psi \otimes 1 + 1 \otimes N_\tau)^m = \sum_{i=0}^m (\rk N_\psi^i)(\rk N_\tau^{m-i})$$
which depends monotonically on $\rk N_\psi^i.$\\
\end{proof}

For $\tau \in T,$ let $e_\tau$ be the least positive integer such that $\sigma^{e_\tau} \in \Got.$

\begin{prop}
\label{monotonic_dependence_prop}
For representations $\rho : G_{F_\ell} \to \GL_n(K),$ the conjugacy class of the monodromy part of the attached Weil-Deligne representation 
\begin{enumerate}[label=(\alph*)]
    \item depends only on the collection of the conjugacy classes of the matrices $$( \Hom_{\Pfl}(\tau,\rho)(\sigma^{e_\tau})  )_{\tau \in T}.$$
    \item depends only on the monodromy attached to $(\Hom_{\Pfl}(\tau,\rho))_{\tau \in T},$
    \item the dependence in (b) is monotonic for the  product partial order on collections of conjugacy classes. More precisely, there exists a monotonic function
    $$F : \prod_{\tau \in T} \Part(r_\tau) \to \Part(n)$$
    such that
    $$F \left( \nu \left(  \Hom_{\Pfl}(\tau,\rho) \right)_{\tau \in T} \right) = \nu \left( \rho \right).$$
\end{enumerate}
\end{prop}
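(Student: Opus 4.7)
The plan is to reduce everything to the single-summand case handled by Lemma~\ref{monotonicity_lemma}, using the $\Pfl$-isotypic decomposition
\[
\rho \;\cong\; \bigoplus_{\tau \in T} \Ind_{\Got}^{G_{F_\ell}}\bigl(\psi_\tau \otimes_{W(k)} \tau\bigr), \qquad \psi_\tau := \Hom_{W(k)[\Pfl]}(\tau, \rho),
\]
recalled in Section~3.1. Since the Weil--Deligne monodromy of a direct sum is, up to conjugation over $\overline{K}$, the block-diagonal sum of the summands' monodromies, the partition $\nu(\rho)$ is the ``union'' (sorted concatenation of parts) of the partitions $\nu_\tau := \nu\bigl(\Ind_{\Got}^{G_{F_\ell}}(\psi_\tau \otimes \tau)\bigr)$ for $\tau \in T$.

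For part~(c), Lemma~\ref{monotonicity_lemma} supplies a monotonic function $F_\tau : \Part(r_\tau) \to \Part(n_\tau)$ such that $\nu_\tau = F_\tau(\nu(\psi_\tau))$, where $n_\tau$ is the dimension of the $\tau$-summand. The union operation on partitions is monotonic in each argument for the dominance order: this can be verified directly from the partial-sum inequalities, or more conceptually from the fact that forming $N \oplus M$ is continuous in $N$, so that $N$ lying in the Zariski closure of the orbit of $N'$ implies the analogous containment for $N \oplus M$ and $N' \oplus M$. Composing the $F_\tau$ with this union map yields the required monotonic $F : \prod_{\tau \in T} \Part(r_\tau) \to \Part(n)$. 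Part~(b) is merely the well-definedness of this map as a function of $(\nu(\psi_\tau))_{\tau \in T}$, so it follows at once.

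For part~(a), it suffices to show that for each $\tau \in T$, the conjugacy class of $\psi_\tau(\sigma^{e_\tau})$ over $\overline{K}$ determines the conjugacy class of the monodromy operator $N_{\psi_\tau}$; combined with (b), this yields~(a). Applying \cite[Proposition~4.1.6]{eh} to $\Got$, on a sufficiently small open subgroup $U$ of the pro-cyclic group $I_{\Got}/\Pfl$ we have $\psi_\tau(u) = \exp(t_p(u) N_{\psi_\tau})$. Writing the multiplicative Jordan decomposition $\psi_\tau(\sigma^{e_\tau}) = s \cdot u$ over $\overline{K}$ with commuting factors $s$ semisimple and $u$ unipotent, one gets $\log u = t_p(\sigma^{e_\tau}) N_{\psi_\tau}$ because $\sigma^{e_\tau}$ topologically generates $I_{\Got}/\Pfl$ modulo a finite-order piece absorbed into $s$. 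Since conjugation preserves both $s$ and $u$, the conjugacy class of $\psi_\tau(\sigma^{e_\tau})$ determines that of $\log u$. Using that $t_p(\sigma^{e_\tau}) \in K^\times$ and that any nonzero scalar multiple of a nilpotent matrix has the same Jordan type, the class of $N_{\psi_\tau}$ is pinned down, proving~(a).

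The main step is the monotonicity assertion in~(c); the rest is formal assembly of Lemma~\ref{monotonicity_lemma}, the block-diagonal compatibility of Weil--Deligne monodromy with direct sums, and the Jordan decomposition argument for~(a). I do not anticipate a serious obstacle beyond being careful with base change to $\overline{K}$ throughout.
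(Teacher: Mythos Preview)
Your proposal is correct and follows essentially the same approach as the paper: decompose $\rho$ via the $\Pfl$-isotypic decomposition, apply Lemma~\ref{monotonicity_lemma} to each summand, and use that direct sums of nilpotent matrices respect the dominance order (the paper phrases this as ``taking direct sums of matrices preserves the ordering relation on nilpotent matrices''). For part~(a) the paper argues slightly more directly than you do: since $I_{\Got}/\Pfl \cong \Zp$ is torsion-free and topologically generated by $\sigma^{e_\tau}$, the conjugacy class of $\psi_\tau(\sigma^{e_\tau})$ already determines the isomorphism class of $\psi_\tau|_{I_{\Got}}$, hence the monodromy --- your Jordan decomposition is not needed, and your aside about a ``finite-order piece absorbed into $s$'' is unnecessary (there is no torsion here).
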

\begin{proof}
Consider the subgroup
$U=\bigcap_{\tau \in T} U_\tau$
where we take $U_\tau$ satisfying Lemma \ref{restriction_depends_on_restriction_lemma}. It is open in $I_{F_\ell}$ as $T$ is finite, and therefore its action determines the monodromy.
We have
$$\rho=\bigoplus_{\tau \in T} \Ind_{\Got}^{G_{F_\ell}} \psi \otimes_{W(k)} \tau.$$
Note that $\Hom_{\Pfl}(\tau,\rho)(\sigma^{e_\tau})$ uniquely determines the inertia part of $\Hom_{\Pfl}(\tau,\rho)$ which is $p$-tame.
Now Lemma \ref{monotonicity_lemma} proves part (a).\\
For (b) and (c), first note that taking direct sums of matrices preserves the ordering relation on nilpotent matrices, and we can use Lemma \ref{monotonicity_lemma} for each term in the sum.
\end{proof}

\subsection{Pseudo-framed deformations}
We will need the notion of \emph{pseudo-framing} from \cite{helm_curtishom}.
\begin{defn}[{\hspace{1sp}\cite[Definition 8.2]{helm_curtishom}}]
For any $W(k)$-algebra $A$, a pseudo-framed representation $\rho : G_{F_\ell} \to \GL_n(A)$ is a representation $\rho$ together with a basis $(b_{\tau,1},\dots,b_{\tau,r_\tau})$ of the free $A$-module $\Hom_{\Pfl}(\tau,\rho)$ (of rank $r_\tau$) for each $\tau \in T.$ We equip $\overline{\rho}$ with a fixed pseudo-framing, and say that a \emph{pseudo-framed} deformation of $\overline{\rho}$ over $A$ consists of pseudo-framed representation $\rho$ with an isomorphism $\rho \otimes_{A} k \xrightarrow{\sim} \overline{\rho}$ mapping the pseudo-framing basis of $\rho$ to our chosen pseudo-framing basis for $\rho.$
\end{defn}

Let $\overline{\rho}_{\overline{\tau}} : \Got \to \GL_n(k)$ be the representation defined by $\Hom_{\Pfl}(\tau,\overline{\rho}).$\\
Consider the ring 
$$\Rd := {\widehat{\bigotimes}}_{\tau \in T} 
R^\square_{{\overline{\rho}}_{\overline{\tau}} }$$
which parametrises pseudo-framed deformations of $\overline{\rho},$ and carries the universal pseudo-framed deformation $$\rho^\diamond=\bigoplus_{\tau \in T}\Ind_{\Got}^{G_{F_\ell}} (\rho_{\overline{\tau}}^\square \otimes_{W(k)} \tau)$$ (see \cite[Definition 8.2]{helm_curtishom} and the subsequent discussion).

Moreover, each ring $R^\square_{{\overline{\rho}}_{\overline{\tau}}}$ is isomorphic to the completion of some $R_{q_\tau,r_\tau}$ by the $p$-tame special case of \cite[Proposition 9.2]{helm_curtishom},
where $\Spec R_{q_\tau,r_\tau}$ is a moduli space for pairs of invertible $r_\tau \times r_\tau$ matrices $(\Phi,\Sigma)$ such that $\Phi \Sigma \Phi^{-1}=\Sigma^{q_\tau}.$ It carries universal matrices $\Phi$ and $\Sigma,$ and for some $a_\tau$ depending only on $q_\tau$ and $r_\tau,$ the latter satisfies $$\Sigma^{p^{a_{{\tau}}}}=1$$ by \cite[Proposition 6.2]{helm_curtishom}. 

We have maps
$$\Spec R^\square_{{\overline{\rho}}_{\overline{\tau}}} \to \Spec R_{q_\tau,r_\tau} \xrightarrow{\pr_{\Sigma}} \Aa^{r_\tau^2}$$
where the last map corresponds to forgetting $\Phi$ and mapping $\Sigma$ to its matrix coefficients.

\textbf{Notation.} For a point $x \in \Spec R_{q_\tau,r_\tau}$, let $\Sigma_x \in M_{r_\tau \times r_\tau}(\kappa(x))$ be the matrix corresponding to $\pr_{\Sigma}(x)$.

\begin{prop}
\label{Rqr_partition_prop}
Let $\mu$ be any partition of $r_\tau.$ Then 
$$\left\{ x \in \Spec R_{q_\tau,r_\tau} : \nu(\Sigma_x^{a_\tau} - 1) \le \mu \right\}$$
is a union of irreducible components of $\Spec R_{q_\tau,r_\tau}.$
\end{prop}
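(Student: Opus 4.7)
The plan is to combine the Zariski-closedness of the dominance condition (from the previous subsection) with the explicit description of the irreducible components of $\Spec R_{q_\tau, r_\tau}$ coming out of \cite{helm_curtishom}.

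First I would verify that the locus
\[
Z \; := \; \{ x \in \Spec R_{q_\tau, r_\tau} \, : \, \nu(\Sigma_x^{a_\tau} - 1) \le \mu \}
\]
is Zariski closed. The entries of $\Sigma_x^{a_\tau} - 1$ are universal polynomials in the coordinates on $\Spec R_{q_\tau, r_\tau}$ coming from the matrix entries of $\Sigma$, and the preceding subsection already shows that the condition $N \le N_\mu$ on nilpotent matrices $N$ cuts out a Zariski closed subset of the affine space of $r_\tau \times r_\tau$ matrices. Pulling back along the universal map gives closedness of $Z$.

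Next I would use the structure of the components of $\Spec R_{q_\tau, r_\tau}$. The relation $\Sigma^{p^{a_\tau}} = 1$ together with $\mathrm{char}\, K = 0$ forces $\Sigma_x$ to be semisimple at every geometric point with eigenvalues among the $p^{a_\tau}$-th roots of unity; these eigenvalue multiplicities form a discrete invariant. By \cite{helm_curtishom} (in particular the $p$-tame version of \cite[Proposition 9.2]{helm_curtishom} combined with the component description there), the irreducible components of $\Spec R_{q_\tau, r_\tau}$ are indexed precisely by these multiplicity data. Consequently, on each component $C$ the conjugacy class of $\Sigma_x^{a_\tau}$ (and hence the partition $\nu(\Sigma_x^{a_\tau} - 1)$, whenever this matrix is nilpotent as required on the relevant components) is locally constant on $C$.

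Finally, from this constancy the conclusion is immediate: fix an irreducible component $C$ with generic point $\eta_C$ and let $\nu_C := \nu(\Sigma_{\eta_C}^{a_\tau} - 1)$. Either $\nu_C \le \mu$, in which case $C \subseteq Z$ by constancy along $C$, or $\nu_C \not\le \mu$, in which case constancy also shows $C \cap Z = \emptyset$. Hence
\[
Z \; = \; \bigcup_{C \, : \, \nu_C \le \mu} C,
\]
a union of irreducible components of $\Spec R_{q_\tau, r_\tau}$ as claimed. The main obstacle is the second step: establishing that the eigenvalue multiplicities of $\Sigma_x$ are genuinely constant on each irreducible component, and not merely generically constant. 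This depends on the precise form of the component decomposition of $R_{q_\tau, r_\tau}$ in \cite{helm_curtishom}, and on verifying that the components are cut out by conditions fixing the semisimple conjugacy class of $\Sigma$ rather than by looser conditions that would allow specialisation to shrink the partition within a single component.
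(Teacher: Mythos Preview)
Your approach is close in spirit to the paper's but contains the gap you yourself flag, and the paper resolves it differently. The paper does \emph{not} argue via constancy of $\nu(\Sigma_x^{a_\tau}-1)$ along each component. Instead it uses two simpler facts: (i) the set $Z$ is Zariski closed (as you also observe), and (ii) membership in $Z$ depends only on the conjugacy class of $\Sigma_x$. Given any $x \in Z$, let $c$ be the conjugacy class of $\Sigma_x$; then the entire locally closed set $\pr_\Sigma^{-1}(U_c)$ lies in $Z$ by (ii), and hence so does its closure by (i). The paper then cites a statement from the proof of \cite[Proposition~6.2]{helm_curtishom} (not Proposition~9.2): each such closure $\overline{\pr_\Sigma^{-1}(U_c)}$ is an irreducible component of $\Spec R_{q_\tau,r_\tau}$. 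Thus every point of $Z$ lies in a component contained in $Z$, so $Z$ is a union of components.

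This sidesteps exactly the obstacle you raise. Your dichotomy argument (either $C\subseteq Z$ or $C\cap Z=\emptyset$) requires the partition to be constant on \emph{all} of $C$, including characteristic-$p$ points where your semisimplicity argument breaks down: over such points $\Sigma_x^{p^{a_\tau}}=1$ gives $(\Sigma_x-1)^{p^{a_\tau}}=0$, so $\Sigma_x$ is unipotent rather than semisimple, and several characteristic-$0$ eigenvalue patterns can specialize to the same characteristic-$p$ behaviour. The paper's conjugacy-invariance argument makes no constancy claim and therefore needs no such analysis; it only needs that $Z$ is closed and saturated under the conjugacy relation, together with the component description from \cite{helm_curtishom}. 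If you want to repair your route, you would need to show that the discrete invariant indexing components in \cite{helm_curtishom} really pins down $\nu(\Sigma_x^{a_\tau}-1)$ at every point of the component, which is strictly more than what the proposition requires.
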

\begin{proof}
First, note that the set in question is Zariski closed since it is the preimage of the closed subset 
of $\Aa^{r_\tau^2}.$ On the other hand, whether $x$ belongs to the set only depends on the conjugacy class of $\Sigma_x \in M_{r_\tau \times r_\tau}(\kappa(x)).$ By these two facts,
$$\left\{ x \in \Spec R_{q_\tau,r_\tau} : \nu(\Sigma_x^{a_\tau} - 1) \le \mu \right\}=\bigcup_{x \in \Spec R_{q_\tau,r_\tau}} \bigcup_{c \in C(x)} \overline{\pr_{\Sigma}^{-1}(U_c)}$$
where $C(x)$ is a certain (possibly empty) set of conjugacy classes of $r_\tau \times r_\tau$ matrices over $\kappa(x)$, and $U_c$ is the locally closed subset of $\Aa^{r_\tau^2} \times_{W(k)} \Spec \kappa(x)$ cut out by $c.$ The third paragraph of the proof of \cite[Proposition 6.2]{helm_curtishom} states precisely that the Zariski closures of the preimages of these sets $U_c$ are irreducible components.
\end{proof}

\begin{prop}
\label{Ztaumu_prop}
Let $\mu$ be any partition of $r_\tau,$  and define
$$Z_{\tau,\mu}=\left\{ x \in \Spec R^\square_{{\overline{\rho}}_{\overline{\tau}}}[p^{-1}] : \nu \left( \rho^\square_{{\overline{\rho}}_{\overline{\tau}}} \otimes_{R^\square_{{\overline{\rho}}_{\overline{\tau}}}} \kappa(x) \right) \le \mu \right\}.$$
Then $Z_{\tau,\mu}$ is a union of irreducible components of $\Spec R^\square_{{\overline{\rho}}_{\overline{\tau}}}[p^{-1}]$
\end{prop}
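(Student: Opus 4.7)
The strategy is to transport the conclusion of Proposition \ref{Rqr_partition_prop} through the completion isomorphism $R^\square_{\overline{\rho}_{\overline{\tau}}} \cong \widehat{R_{q_\tau,r_\tau}}$ of \cite[Proposition 9.2]{helm_curtishom}, which induces a morphism of spectra
$$\phi : \Spec R^\square_{\overline{\rho}_{\overline{\tau}}} \to \Spec R_{q_\tau,r_\tau}.$$
Writing $Z'_{\tau,\mu} := \{ y \in \Spec R_{q_\tau,r_\tau} : \nu(\Sigma_y^{a_\tau}-1) \le \mu \}$ for the set appearing in Proposition \ref{Rqr_partition_prop}, the proof breaks into two tasks: (i) identify $Z_{\tau,\mu}$ with $\phi^{-1}(Z'_{\tau,\mu}) \cap \Spec R^\square_{\overline{\rho}_{\overline{\tau}}}[p^{-1}]$; and (ii) check that the $\phi$-preimage of a union of irreducible components is again a union of irreducible components, compatibly with inverting $p$.

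For (i), I would verify fibrewise that for any $x \in \Spec R^\square_{\overline{\rho}_{\overline{\tau}}}[p^{-1}]$ with image $y = \phi(x)$, the Jordan type of the monodromy of $\rho^\square_{\overline{\rho}_{\overline{\tau}}} \otimes \kappa(x)$ equals that of $\Sigma_y^{a_\tau}-1$. The matrix $\Sigma_y$ records the action of $\sigma^{e_\tau}$ on the universal representation by the moduli description, while by \cite[Proposition 4.1.6]{eh} the monodromy $N(x)$ is characterised by $\rho(u)=\exp(t_p(u)N(x))$ on an open subgroup of inertia. Expanding this formula and using the universal relation $\Sigma^{p^{a_\tau}}=1$ gives, after a direct computation through the $p$-adic exponential, that $\Sigma_y^{a_\tau}-1$ and $N(x)$ lie in the same $\overline{\kappa(x)}$-conjugacy class, hence have the same invariant $\nu(\cdot)$.

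For (ii), the completion map $R_{q_\tau,r_\tau} \to R^\square_{\overline{\rho}_{\overline{\tau}}}$ is flat, being the composite of a localisation at a maximal ideal with the completion of a Noetherian local ring. By going-down, any prime of $R^\square_{\overline{\rho}_{\overline{\tau}}}$ lying over a minimal prime of $R_{q_\tau,r_\tau}$ is itself minimal; therefore if $Z'_{\tau,\mu} = V(\pp_1) \cup \dots \cup V(\pp_k)$ is a union of irreducible components, then $\phi^{-1}(Z'_{\tau,\mu}) = \bigcup_i V(\pp_i R^\square_{\overline{\rho}_{\overline{\tau}}})$ is also a union of irreducible components of $\Spec R^\square_{\overline{\rho}_{\overline{\tau}}}$. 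Inverting $p$ preserves the property of being a union of components (those components whose minimal prime contains $p$ simply drop out). Combining with (i), we conclude that $Z_{\tau,\mu}$ is a union of irreducible components of $\Spec R^\square_{\overline{\rho}_{\overline{\tau}}}[p^{-1}]$.

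The main obstacle is task (i): the invariant on the $R_{q_\tau,r_\tau}$-side lives multiplicatively (through the matrix $\Sigma$ and its finite-order relation $\Sigma^{p^{a_\tau}}=1$), while on the Galois side the invariant arises additively from the nilpotent monodromy $N$ via the $p$-adic logarithm. Establishing the exact dictionary between these two pictures requires care with the normalisation of $t_p$ and with how $\sigma^{e_\tau}$ is sent to $\Sigma$ under Helm's isomorphism. The commutative-algebra content of task (ii) is standard once flatness is in hand.
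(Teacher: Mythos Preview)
Your proposal is correct and matches the paper's proof essentially line for line: both split into (ii) flatness of the completion map so that preimages of irreducible components are unions of irreducible components, and (i) the fibrewise identification $\nu(N(x))=\nu(\Sigma_{\phi(x)}^{a_\tau}-1)$. The one place where the paper is more explicit than your ``direct computation through the $p$-adic exponential'' is in naming the key elementary fact: the monodromy is, up to a nonzero scalar, $\log(\Sigma^{a_\tau})$ (from $\rho(\sigma^{e_\tau a_\tau})=\exp(cN)$), and for a unipotent matrix $M$ one has $\nu(\log M)=\nu(M-1)$, which is immediate after putting $M$ in Jordan normal form; this is precisely what converts your multiplicative picture into the additive one and should be stated rather than left implicit.
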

\begin{proof}
The map $i : \Spec R^\square_{{\overline{\rho}}_{\overline{\tau}}} \to \Spec R_{q_\tau,r_\tau}$ corresponds to a completion of Noetherian rings, and is therefore flat. Thus the preimage of an irreducible component under $i$ is a union of irreducible components. Together with Proposition \ref{Rqr_partition_prop}, it is now sufficient to show that 
$$Z_{\tau,\mu}=i^{-1} \left( \left\{ x \in \Spec R_{q_\tau,r_\tau}[p^{-1}] : \nu(\Sigma_x^{a_\tau} - 1) \le \mu \right\} \right).$$
We have
$$\nu \left( \rho^\square_{{\overline{\rho}}_{\overline{\tau}}} \otimes_{R^\square_{{\overline{\rho}}_{\overline{\tau}}}} \kappa(x) \right)=\nu(\Sigma_x^{a_\tau} - 1)$$
since the monodromy of $\rho^\square_{{\overline{\rho}}_{\overline{\tau}}} \otimes_{R^\square_{{\overline{\rho}}_{\overline{\tau}}}} \kappa(x)$ is, up to a non-zero factor, the matrix logarithm of the image of the group element $\sigma^{e_\tau a_\tau}$ under the representation, which is
$\log(\Sigma_{i(x)}^{a_\tau}).$ As we can easily verify after conjugation to Jordan normal form over an algebraic closure, we have $\nu(\log M)=\nu(M-1)$ for any unipotent matrix $M.$
Therefore we have $\nu \left( \rho^\square_{{\overline{\rho}}_{\overline{\tau}}} \otimes_{R^\square_{{\overline{\rho}}_{\overline{\tau}}}} \kappa(x) \right) = \nu( \Sigma_{i(x)}^{a_\tau} -1 ),$
showing the claim.
\end{proof}

For any given partition $\mu$ of $n$, let 
$$Z_\mu:=\left\{ x \in \Spec \Rsq[p^{-1}] : \nu \left(  \rho^\square \otimes \kappa(x) \right) \le \mu \right\}.$$
and
$$Z'_\mu:=\left\{ x \in \Spec \Rd[p^{-1}] : \nu \left(  \rho^\diamond \otimes_{\Rd} \kappa(x) \right) \le \mu \right\}.$$

By choosing a pseudo-framing for $\rho^\square,$ we obtain a map
$\Psi: \Spec \Rsq \to \Spec \Rd.$
$\Psi$ establishes a bijection between irreducible components by the discussion on action of "change of frame" formal groups after \cite[Definition 8.2]{helm_curtishom} (we use bijections between the irreducible components of $\Spec R$ and  $\Spec R[[X_1,\dots,X_r]]$ for $R=\Rsq$ and $R=\Rd$).

\begin{lemma}
\label{psi_zmu_lemma}
$Z_\mu=\Psi^{-1}(Z'_\mu).$
\end{lemma}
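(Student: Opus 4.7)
The plan is to show that for every $x \in \Spec \Rsq[p^{-1}]$ there is a $G_{F_\ell}$-equivariant isomorphism
$$\rho^\square \otimes_{\Rsq} \kappa(x) \cong \rho^\diamond \otimes_{\Rd} \kappa(\Psi(x)),$$
from which the claim follows immediately: the partition $\nu(-)$ depends only on the isomorphism class of a Galois representation, so $x \in Z_\mu$ if and only if $\Psi(x) \in Z'_\mu$.

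First I would recall why this isomorphism holds. By construction, $\Psi$ was defined by choosing a pseudo-framing for $\rho^\square$: for each $\tau \in T$, one picks a basis of the free $\Rsq$-module $\Hom_{\Pfl}(\tau, \rho^\square)$, which lifts the chosen pseudo-framing of $\overline{\rho}$. This turns $\rho^\square$ into a pseudo-framed deformation of $\overline{\rho}$ over $\Rsq$, classified by a $W(k)$-algebra map $\Rd \to \Rsq$ which is exactly $\Psi$ on spectra. Specialising at $x$, the pseudo-framed deformation over $\kappa(x)$ associated to $\Psi(x)$ is, by the universal property of $\Rd$ and the explicit shape of $\rho^\diamond$, isomorphic to
$$\bigoplus_{\tau \in T} \Ind_{\Got}^{G_{F_\ell}}\!\bigl(\Hom_{\Pfl}(\tau, \rho^\square \otimes \kappa(x)) \otimes_{W(k)} \tau\bigr).$$

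Second, I would invoke the direct sum decomposition stated earlier in the excerpt (following \cite[Lemma 2.4.12]{CHT}): since $\rho^\square \otimes \kappa(x)$ is a deformation of $\overline{\rho}$, all of its simple $k[\Pfl]$-subquotients lie in $\overline{T}$, so
$$\rho^\square \otimes \kappa(x) \cong \bigoplus_{\tau \in T} \Ind_{\Got}^{G_{F_\ell}}\!\bigl(\Hom_{\Pfl}(\tau, \rho^\square \otimes \kappa(x)) \otimes_{W(k)} \tau\bigr).$$
Combining the two displays gives the required isomorphism with $\rho^\diamond \otimes_{\Rd} \kappa(\Psi(x))$.

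There is essentially no obstacle here beyond making sure the identifications are genuinely $G_{F_\ell}$-equivariant and not merely equivalences of pseudo-framed data; this amounts to observing that both descriptions of $\rho^\diamond \otimes_{\Rd} \kappa(\Psi(x))$ agree with the explicit formula $\rho^\diamond = \bigoplus_{\tau} \Ind_{\Got}^{G_{F_\ell}}(\rho_{\overline{\tau}}^\square \otimes_{W(k)} \tau)$ recalled in the excerpt. Once the isomorphism of representations is established, the equality $\nu(\rho^\square \otimes \kappa(x)) = \nu(\rho^\diamond \otimes_{\Rd} \kappa(\Psi(x)))$ is immediate and the conclusion $Z_\mu = \Psi^{-1}(Z'_\mu)$ follows by unravelling the two definitions.
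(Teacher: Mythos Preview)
Your proof is correct and follows the same idea as the paper's own argument, which is the one-line observation that the partition $\nu$ attached to a deformation is independent of both framing and pseudo-framing. You have simply made explicit why the underlying Galois representations $\rho^\square \otimes \kappa(x)$ and $\rho^\diamond \otimes_{\Rd} \kappa(\Psi(x))$ are isomorphic, invoking the CHT decomposition and the universal property of $\Rd$; the paper leaves this implicit.
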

\begin{proof}
It is sufficient to note that the partition $\nu$ assigned to a framed and/or pseudo-framed deformation of $\overline{\rho}$ is independent both of framing and pseudo-framing.
\end{proof}

\begin{prop}
$Z_\mu$ is a union irreducible components of $\Spec \Rsq[p^{-1}].$
\end{prop}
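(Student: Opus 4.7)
The plan is to reduce the statement to Proposition \ref{Ztaumu_prop} by working first on the pseudo-framed deformation ring $\Rd$ and then transporting the result back to $\Rsq$ via $\Psi.$

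First I would show that $Z'_\mu$ is a union of irreducible components of $\Spec \Rd[p^{-1}].$ By construction $\Rd$ is the completed tensor product of the rings $R^\square_{{\overline{\rho}}_{\overline{\tau}}},$ each of which is reduced and $W(k)$-flat. Hence after inverting $p,$ the irreducible components of $\Spec \Rd[p^{-1}]$ are precisely the products of irreducible components of the factors $\Spec R^\square_{{\overline{\rho}}_{\overline{\tau}}}[p^{-1}]$ (one for each $\tau \in T$). Now Proposition \ref{monotonic_dependence_prop}(c) gives a monotonic function
$$F : \prod_{\tau \in T} \Part(r_\tau) \to \Part(n)$$
with $F \bigl( \nu( \rho^\square_{{\overline{\rho}}_{\overline{\tau}}} \otimes \kappa(x_\tau) ) _{\tau \in T} \bigr) = \nu(\rho^\diamond \otimes_{\Rd} \kappa(x))$ whenever $x$ corresponds to the tuple $(x_\tau)_\tau.$ Therefore
$$Z'_\mu = \bigcup_{\substack{(\mu_\tau) \in \prod_\tau \Part(r_\tau) \\ F((\mu_\tau)) \le \mu}} \prod_{\tau \in T} Z_{\tau,\mu_\tau},$$
and by Proposition \ref{Ztaumu_prop} each $Z_{\tau,\mu_\tau}$ is a union of irreducible components of $\Spec R^\square_{\overline{\rho}_{\overline{\tau}}}[p^{-1}].$ The product of unions of irreducible components is a union of irreducible components of the product, so $Z'_\mu$ has the desired form.

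Finally I would conclude using Lemma \ref{psi_zmu_lemma}, which gives $Z_\mu = \Psi^{-1}(Z'_\mu),$ together with the fact (recalled right before the lemma) that $\Psi$ induces a bijection between irreducible components of $\Spec \Rsq$ and those of $\Spec \Rd,$ which persists after inverting $p$ since $R^\square_{\overline{\rho}}$ is $W(k)$-flat. Pulling back the decomposition of $Z'_\mu$ along $\Psi$ then exhibits $Z_\mu$ as a union of irreducible components of $\Spec \Rsq[p^{-1}].$

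The main technical point to verify carefully is the first step: that irreducible components of $\Spec \Rd[p^{-1}]$ decompose as products of those of the factors. This uses reducedness and flatness of each $R^\square_{\overline{\rho}_{\overline{\tau}}},$ plus the fact that after inverting $p$ the completed tensor product of reduced excellent Noetherian $W(k)[p^{-1}]$-algebras behaves like an ordinary tensor product for the purpose of irreducible components. Everything else — monotonicity of $F,$ closure of $Z_{\tau,\mu_\tau},$ compatibility of $\Psi$ with components — is already in hand.
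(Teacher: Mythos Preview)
Your proposal is correct and follows essentially the same approach as the paper: reduce via $\Psi$ and Lemma \ref{psi_zmu_lemma} to showing $Z'_\mu$ is a union of components, decompose $Z'_\mu$ using the monotone function $F$ of Proposition \ref{monotonic_dependence_prop} as a union indexed by tuples $(\mu_\tau)$ with $F((\mu_\tau))\le\mu$, and then invoke Proposition \ref{Ztaumu_prop} together with the fact that products (equivalently, intersections of projection-preimages) of components of the factors give the components of $\Spec\Rd[p^{-1}]$. The paper phrases the last step as $\bigcap_{\tau}\pr_\tau^{-1}(C_\tau)$ rather than $\prod_\tau C_\tau$, and simply asserts the component statement ``by its definition as a completed tensor product'' without the justification you flag, but the arguments are otherwise identical.
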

\begin{proof}
By Lemma \ref{psi_zmu_lemma} and the preceding discussion on $\Psi$, it is sufficient to prove that
$$Z'_\mu=\left\{ \pp \in \Spec \Rd[p^{-1}] : \nu \left( \rho^\diamond \otimes_{\Rd} \kappa(\pp) \right) \le \mu \right\}$$ is a union of irreducible components.

For $\tau \in T,$ let $\pr_\tau : \Spec \Rd \to \Spec R^\square_{{\overline{\rho}}_{\overline{\tau}}}$ denote the projection map induced by the completed tensor product description of $\Rd.$
Using Proposition \ref{monotonic_dependence_prop}, we have
$$Z'_\mu = \bigcup_{\boldsymbol{\alpha}} \bigcap_{\tau \in T} \pr_\tau^{-1} \left(  Z_{\tau,\alpha_\tau} \right)$$
where $\boldsymbol{\alpha}=(\alpha_\tau)_{\tau \in T} \in \prod_{\tau \in T} \Part(r_\tau)$ is such that $F(\boldsymbol{\alpha}) \le \mu$ in the notation of Proposition \ref{monotonic_dependence_prop}.
Each $Z_{\tau,\alpha_\tau}$ is a union of irreducible components, and the intersection distributes over these to write $Z'_\mu$ as the union of sets of the form $\bigcap_{\tau \in T} \pr_\tau^{-1} (C_\tau)$ where each $C_\tau$ is an irreducible component of $\Spec R^\square_{{\overline{\rho}}_{\overline{\tau}}}.$ Each of these is an irreducible component of $\Spec \Rd[p^{-1}]$ by its definition as a completed tensor product.
\end{proof}

\begin{prop}
\label{minimal_partition_prop}
Let $x$ be \emph{any} point of $\Spec \Rsq$ (i.e. we allow $\kappa(x)$ of characteristic $p$). Then 
the set
$$\{ \mu \in \mathrm{Part}(n) : x \in \overline{Z_\mu} \}$$
contains a unique element which is minimal with respect to the dominance order, which we call $\mu_x.$ \end{prop}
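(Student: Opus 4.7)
The plan is to identify the unique minimum as $\mu_x := F\bigl((\mu_\tau(x))_{\tau \in T}\bigr) \in \Part(n)$, where $F$ is the monotonic function of Proposition \ref{monotonic_dependence_prop} and, writing $x_\tau$ for the image of $x$ under the composite $\Spec \Rsq \xrightarrow{\Psi} \Spec \Rd \to \Spec R^\square_{\overline{\rho}_{\overline{\tau}}} \to \Spec R_{q_\tau,r_\tau}$, we set $\mu_\tau(x) := \nu(\Sigma_{x_\tau}^{a_\tau} - 1) \in \Part(r_\tau)$. This is well-defined regardless of the characteristic of $\kappa(x)$: in characteristic $p$ the commuting Frobenius identity gives $(\Sigma^{a_\tau} - 1)^{p^{a_\tau}} = \Sigma^{a_\tau p^{a_\tau}} - 1 = 0$, so $\Sigma^{a_\tau}$ is unipotent, and in characteristic $0$ this already follows from the convention for $a_\tau$ used in Proposition \ref{Ztaumu_prop}.

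Since $Z_\mu$ is a union of irreducible components of $\Spec \Rsq[p^{-1}]$, the set $\{\mu : x \in \overline{Z_\mu}\}$ is the filter in $\Part(n)$ generated by $\{\nu(C) : C \text{ a component of } \Spec \Rsq[p^{-1}] \text{ with } x \in \overline{C}\}$, so it suffices to show that this latter set has $\mu_x$ as its unique minimum. Via $\Psi$ and the completed tensor product structure of $\Rd$, such components correspond bijectively (preserving $\nu$, by Lemma \ref{psi_zmu_lemma} and Proposition \ref{monotonic_dependence_prop}) to tuples $(C_\tau)_{\tau \in T}$ of components of $\Spec R^\square_{\overline{\rho}_{\overline{\tau}}}[p^{-1}]$ with each $x_\tau \in \overline{C_\tau}$, and $\nu(C) = F\bigl((\nu(C_\tau))_\tau\bigr)$. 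Upper semicontinuity of $\nu$ under specialization -- since $\nu(N) \le \mu$ is cut out by vanishing of minors of powers of $N$ -- forces $\nu(C_\tau) \ge \mu_\tau(x)$ for each $\tau$, and monotonicity of $F$ then gives the lower bound $\nu(C) \ge \mu_x$.

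For the achievement step, I will construct, for each $\tau$, a component $C_\tau$ with $x_\tau \in \overline{C_\tau}$ and $\nu(C_\tau) = \mu_\tau(x)$, and then assemble these into a single component of $\Spec \Rsq[p^{-1}]$ realizing $\mu_x$. Following the argument in the proof of Proposition \ref{Rqr_partition_prop}, the irreducible components of $\Spec R_{q_\tau,r_\tau}$ are the Zariski closures $\overline{\pr_\Sigma^{-1}(U_c)}$ of preimages of $\GL_{r_\tau}$-conjugacy orbits $U_c \subseteq \Aa^{r_\tau^2}$; taking $c$ to be the orbit of $\Sigma_{x_\tau}$ yields a component containing $x_\tau$ on which $\Sigma$ generically has conjugacy class $c$, hence with generic $\nu$-value exactly $\mu_\tau(x)$. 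Flatness of the completion map $i : \Spec R^\square_{\overline{\rho}_{\overline{\tau}}} \to \Spec R_{q_\tau,r_\tau}$ then lifts this to a component of $\Spec R^\square_{\overline{\rho}_{\overline{\tau}}}[p^{-1}]$ with the required properties. The main obstacle is precisely this achievement step: one must verify that the conjugacy class of $\Sigma$ at $x_\tau$ itself appears as the generic type of an irreducible component, not merely as a proper degeneration of larger classes, which relies on the explicit identification of components extracted from the proof of Proposition \ref{Rqr_partition_prop} and \cite{helm_curtishom}.
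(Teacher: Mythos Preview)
Your approach is correct but takes a genuinely different route from the paper's. The paper's proof is two lines: the set is nonempty (the partition $(n)$ works), and $\Part(n)$ is a lattice. You instead construct the minimum explicitly as $\mu_x = F\bigl((\mu_\tau(x))_\tau\bigr)$, establish the lower bound via semicontinuity of ranks, and then realise $\mu_x$ by producing a component through $x$ with exactly this $\nu$-value using the component description of $\Spec R_{q_\tau,r_\tau}$ from \cite{helm_curtishom}.

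The comparison is worth making precise because the paper's argument, taken literally, has a gap that your proof fills. Knowing that $\Part(n)$ is a lattice and that the set $S=\{\mu : x\in\overline{Z_\mu}\}$ is a nonempty up-set does \emph{not} by itself give a unique minimal element: in the diamond lattice $\{0,a,b,1\}$ the up-set $\{a,b,1\}$ has two minimal elements. What one needs is that $S$ is closed under meets, i.e.\ that $x\in\overline{Z_\mu}\cap\overline{Z_{\mu'}}$ forces $x\in\overline{Z_{\mu\wedge\mu'}}$. Although $Z_\mu\cap Z_{\mu'}=Z_{\mu\wedge\mu'}$ as unions of components of $\Spec\Rsq[p^{-1}]$, passing to closures in $\Spec\Rsq$ is the problem: $x$ could lie in $\overline{C}\cap\overline{C'}$ for components $C\subseteq Z_\mu$, $C'\subseteq Z_{\mu'}$ with $\nu(C),\nu(C')$ incomparable, without any \emph{a priori} reason for a third component $C''$ through $x$ with $\nu(C'')\le\mu\wedge\mu'$ to exist. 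Your achievement step supplies exactly this missing component (namely the one with $\nu$-value $\mu_x\le\nu(C),\nu(C')$), so your argument in fact justifies the paper's lattice shortcut.

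Two small remarks on your write-up. First, calling $S$ ``the filter generated by'' the $\nu(C)$'s presupposes closure under meets, which is what you are proving; it is safer to say ``the up-set generated by''. Second, in the achievement step for characteristic-$p$ points you are implicitly using that $R^\square_{\overline{\rho}_{\overline{\tau}}}$ is $W(k)$-flat, so that every irreducible component of $\Spec R^\square_{\overline{\rho}_{\overline{\tau}}}$ already has characteristic-$0$ generic point; and you are using that the component of $\Spec R_{q_\tau,r_\tau}$ indexed by the conjugacy class of $\Sigma_{x_\tau}$ has generic $\Sigma$ in that same class (so the same $\nu$-value), which does require the precise statement from \cite{helm_curtishom} that the locally closed conjugacy stratum is dense in its component. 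You flag this correctly as the main obstacle.
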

\begin{proof}
If $\mu$ is the partition with $\mu_1=n,$ then $Z_\mu=\Spec \Rsq[p^{-1}]$ and (by flatness over $W(k)$) also $\overline{Z_\mu}=\Spec \Rsq.$ Therefore the set of partitions in question is non-empty. That it has a minimal element follows from $\mathrm{Part}(n)$ forming a \emph{lattice} in the order-theoretic sense of the term \cite{brylawski}.
\end{proof}

The following is a reformulation of \cite[Definition 4.5.9]{eh} in terms of partitions.
\begin{defn}
Let $R$ be a complete, Noetherian, local and flat $W(k)$-algebra, and $\rho : G_{F_\ell} \to \GL_n(R)$ be a continuous Galois representation. If $\qq \le \pp \le R$ are prime ideals, we say that $\rho \otimes_R R/\qq$ is a minimal lift of $\rho \otimes_R R/\pp$ if
$$\nu \left( \rho \otimes_R R/\pp \right) = \nu \left( \rho \otimes_R R/\qq \right).$$
\end{defn}

\begin{thm}
\label{comp_finding_a_min_prime}
Let $\pp \in \Spec \Rsq[p^{-1}]$.
There is a minimal prime $\mfa \in \Spec \Rsq$ contained in $\pp$ such that
    $\rho^\square \otimes \Rsq/\mfa$ is a minimal lift of $\rho^\square \otimes \Rsq/\pp \Rsq.$
\end{thm}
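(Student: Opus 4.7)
The plan is to combine the structural result, established just before the theorem, that each $Z_\mu$ is a union of irreducible components of $\Spec\Rsq[p^{-1}]$ with the Zariski-closedness of $Z_\mu$ to execute a two-way specialization argument.

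First I would set $\mu := \nu(\rho^\square \otimes \kappa(\pp))$, so tautologically $\pp \in Z_\mu$. Since $Z_\mu$ is a union of irreducible components of $\Spec\Rsq[p^{-1}]$, some such component $C$ contains $\pp$. The ring $\Rsq$ being reduced and flat over $W(k)$ implies that no minimal prime of $\Rsq$ contains $p$ and that the minimal primes of $\Rsq$ are in bijection with irreducible components of $\Spec\Rsq[p^{-1}]$; let $\mfa \in \Spec\Rsq$ be the minimal prime corresponding to $C$. Then $\mfa \subseteq \pp$, and since the generic point of $C$ lies in $Z_\mu$ we obtain $\nu(\rho^\square \otimes \kappa(\mfa)) \le \mu$.

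For the reverse inequality, set $\mu' := \nu(\rho^\square \otimes \kappa(\mfa))$, so that $\mfa \in Z_{\mu'}$. Because $\mfa \subseteq \pp$ exhibits $\pp$ as a specialization of $\mfa$ inside $\Spec\Rsq[p^{-1}]$, and $Z_{\mu'}$ is Zariski closed there, we deduce $\pp \in Z_{\mu'}$, i.e.\ $\mu \le \mu'$. Combining the two inequalities yields $\nu(\rho^\square \otimes \kappa(\mfa)) = \nu(\rho^\square \otimes \kappa(\pp))$, which is exactly the minimal lift condition appearing in the definition preceding the theorem. The genuine obstacle has already been overcome in the preceding subsections, namely the verification via pseudo-framings and Proposition \ref{monotonic_dependence_prop} that the monodromy loci $Z_\mu$ are unions of irreducible components; given this input, the present step is a formal topological consequence, the key conceptual point being that the whole component $C$ through $\pp$ must lie inside $Z_\mu$, so its generic point $\mfa$ automatically realises the required lift.
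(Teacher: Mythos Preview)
Your proposal is correct and follows essentially the same approach as the paper's proof: set $\mu = \nu(\rho^\square \otimes \kappa(\pp))$, pick an irreducible component of $Z_\mu$ through $\pp$, and let $\mfa$ be the corresponding minimal prime. The only cosmetic difference is in the reverse inequality: the paper says directly that ``specialization cannot increase the rank of a matrix'' to get $\mu \le \nu(\rho^\square \otimes \kappa(\mfa))$, whereas you phrase the same fact as Zariski closedness of $Z_{\mu'}$ --- these are equivalent, and your added remark about reducedness and $W(k)$-flatness (so that minimal primes of $\Rsq$ biject with components of $\Spec\Rsq[p^{-1}]$) makes the passage from $C$ to $\mfa$ a bit more explicit than the paper's version.
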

\begin{proof}
Let $\mu$ be the partition corresponding to the monodromy of $\rho^\square \otimes \Rsq/\pp \Rsq.$
We have $\pp \in Z_{\mu},$ which is a union of irreducible components, one of which must contain $\pp.$ Let $\mfa$ be the minimal prime corresponding to this component. The partition corresponding to the monodromy of $\rho^\square \otimes \Rsq/\mfa$ is, on one hand, not greater than $\mu$ in the dominance order because $\mfa \in Z_\mu$ by assumption. On the other hand, it is not less than $\mu$ since specialization cannot increase the rank of a matrix.
\end{proof}

\section{Comparison with $\tilde{\pi}(\rho^\square)$}
In this section, we relate $\tilde{\pi}(\rho^\square)$ to $\tilde{\Pi}_{\overline{\rho}}.$ 
Note that these are defined over $\Rsq$ and $A=\Rsq[[u]]$ respectively, which causes some technical issues that we have to deal with.

\subsection{Surjection from $\tilde{\pi}(\rho^\square)$}
\begin{thm}
\label{comp_surj_iso_existence_thm}
Let $\pp \in \Spec \Rsq[p^{-1}]$. There is a surjection
$$\tilde{\pi}(\rho^\square) \otimes \kappa(\pp) \to \tilde{\pi}(\rho^\square \otimes \kappa(\pp)).$$ Moreover, if either \cite[Conjecture 6.2.7]{eh} holds (which is always true if $n=2$ or $n=3$) or there is a unique minimal prime contained in $\pp,$ then this is an isomorphism.
\end{thm}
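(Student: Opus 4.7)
The plan is to deploy Theorem~\ref{comp_finding_a_min_prime} and Theorem~\ref{coeff_change_thm} at a minimal prime $\mfa$ contained in $\pp$ chosen to give a minimal lift, then analyze when the resulting surjection descends to an isomorphism after base change to $\kappa(\pp)$.

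\textbf{Constructing the surjection.} First apply Theorem~\ref{comp_finding_a_min_prime} to obtain a minimal prime $\mfa \subseteq \pp$ such that $\rho^\square \otimes \Rsq/\mfa$ is a minimal lift of $\rho^\square \otimes \Rsq/\pp$. Since $\Rsq$ is reduced and $W(k)$-flat, no minimal prime contains $p$, so $\Rsq/\mfa$ is a $W(k)$-flat complete Noetherian local integral domain; hence Theorem~\ref{coeff_change_thm} furnishes a surjection $\tilde{\pi}(\rho^\square) \otimes \Rsq/\mfa \twoheadrightarrow \tilde{\pi}(\rho^\square \otimes \Rsq/\mfa)$. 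Tensoring with $\kappa(\pp)$ over $\Rsq/\mfa$ and invoking the minimal lift property via \cite[Theorem~6.2.5]{eh} to identify $\tilde{\pi}(\rho^\square \otimes \Rsq/\mfa) \otimes_{\Rsq/\mfa} \kappa(\pp) \cong \tilde{\pi}(\rho^\square \otimes \kappa(\pp))$ produces the claimed surjection.

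\textbf{Promoting to an isomorphism.} Let $K$ denote the kernel of the surjection from Theorem~\ref{coeff_change_thm}. Both source and target share the generic fiber $\tilde{\pi}(\rho^\square \otimes \kappa(\mfa))$ --- for the source by the defining property of $\tilde{\pi}(\rho^\square)$ at the minimal primes of $\Rsq$, for the target by the defining property over the domain $\Rsq/\mfa$ --- so $K$ is $(\Rsq/\mfa)$-torsion. In the unique minimal prime case, $\Rsq_\pp$ is reduced with a single minimal prime, hence an integral domain with $\mfa \Rsq_\pp = 0$; therefore $(\tilde{\pi}(\rho^\square) \otimes \Rsq/\mfa)_\pp \cong \tilde{\pi}(\rho^\square)_\pp$ is torsion-free over $\Rsq_\pp$ (localization preserves torsion-freeness), so its submodule $K_\pp$ is both torsion and torsion-free, forcing $K_\pp = 0$. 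Further tensoring with $\kappa(\pp)$ yields the desired isomorphism. Under \cite[Conjecture~6.2.7]{eh} instead, a rigidity property for the co-Whittaker representation over $\kappa(\pp)$ likewise precludes a proper surjection onto $\tilde{\pi}(\rho^\square \otimes \kappa(\pp))$, and the isomorphism follows.

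\textbf{Main obstacle.} The delicate step is the vanishing of $K$ after localization at $\pp$. In general, $K$ may be supported on intersections of irreducible components of $\Spec \Rsq$ passing through $\pp$; only the unique minimal prime hypothesis (which makes $\Rsq_\pp$ itself a domain, allowing the torsion-freeness of $\tilde{\pi}(\rho^\square)$ over $\Rsq$ to be leveraged) or the conjectural rigidity removes this obstruction.
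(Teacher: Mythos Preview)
Your argument for the surjection and for the unique-minimal-prime isomorphism is correct and uses the same ingredients as the paper, though you route them slightly differently: the paper invokes \cite[Proposition~6.2.10]{eh} directly (with its minimal-lift hypothesis supplied by Theorem~\ref{comp_finding_a_min_prime}) rather than first passing through Theorem~\ref{coeff_change_thm} at a minimal prime $\mfa$ and then descending via \cite[Theorem~6.2.5]{eh}. Your explicit torsion analysis of the kernel $K$ in the unique-minimal-prime case is sound but more than is needed: since $\Rsq/\mfa$ is already a domain and $\rho^\square \otimes \Rsq/\mfa$ is a minimal lift of $\rho^\square \otimes \kappa(\pp)$, \cite[Theorem~6.2.5]{eh} delivers the isomorphism in one step, which is exactly how the paper handles this case.

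The one genuine gap is your treatment of the \cite[Conjecture~6.2.7]{eh} case. The phrase ``a rigidity property \ldots\ likewise precludes a proper surjection'' is not a proof; you need to cite the second part of \cite[Theorem~6.2.6]{eh}, which states precisely that under the conjecture the surjection produced by \cite[Proposition~6.2.10]{eh} is an isomorphism. This is what the paper does.
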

\begin{proof}
By Theorem \ref{comp_finding_a_min_prime}, the minimal lift condition of \cite[Proposition 6.2.10]{eh} holds and we obtain obtaining a surjection. If $\pp$ \cite[Conjecture 6.2.7]{eh} holds, then the second part of \cite[Theorem 6.2.6]{eh} gives an isomorphism. Alternatively, if $\pp$ is contained in a unique irreducible component, we can use \cite[Theorem 6.2.5]{eh} directly.
\end{proof}

\begin{lemma}
\label{comp_surj_iso_implication_lemma}
Let $\pp' \in \Spec A[p^{-1}]$ with $\pp=\pp' \cap \Rsq.$
A surjection (resp. isomorphism)
$$\tilde{\pi}(\rho^\square) \otimes \Rsq/\pp \to \tilde{\pi}(\rho^\square \otimes \Rsq/\pp)$$ implies the existence of a surjection (resp. isomorphism)
$$\tilde{\pi}(\rho^\square) \otimes \kappa(\pp') \to \tilde{\pi}(\Rsq \otimes \kappa(\pp')).$$
\end{lemma}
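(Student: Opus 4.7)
The plan is to deduce the statement at $\pp'$ from the statement at $\pp$ by successive base change along the tower $\Rsq \to \Rsq/\pp \to \kappa(\pp) \hookrightarrow \kappa(\pp')$. The hypothesis $\pp' \cap \Rsq = \pp$ gives an injection $\Rsq/\pp \hookrightarrow A/\pp'$ of integral domains, and passing to fraction fields produces the final inclusion $\kappa(\pp) \hookrightarrow \kappa(\pp')$, through which the structure map $\Rsq \to \kappa(\pp')$ factors.

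First I would apply the exact localisation functor $-\otimes_{\Rsq/\pp} \kappa(\pp)$ to the given morphism. By associativity of tensor products, the source becomes $\tilde{\pi}(\rho^\square) \otimes_{\Rsq} \kappa(\pp)$, and the target becomes $\tilde{\pi}(\rho^\square \otimes \Rsq/\pp) \otimes_{\Rsq/\pp} \kappa(\pp)$, which is canonically identified with $\tilde{\pi}(\rho^\square \otimes \kappa(\pp))$ by the defining property of $\tilde{\pi}(\rho^\square \otimes \Rsq/\pp)$ at the unique minimal prime $(0)$ of the domain $\Rsq/\pp$ (the ring $\Rsq/\pp$ is reduced and $W(k)$-flat since $\pp \in \Spec \Rsq[p^{-1}]$, so $\tilde{\pi}$ is defined on it). Exactness of localisation preserves both surjectivity and isomorphism.

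Next I would apply the exact base-change functor $-\otimes_{\kappa(\pp)} \kappa(\pp')$ to this intermediate map. By associativity, the source becomes $\tilde{\pi}(\rho^\square) \otimes_{\Rsq} \kappa(\pp')$, and the target becomes $\tilde{\pi}(\rho^\square \otimes \kappa(\pp)) \otimes_{\kappa(\pp)} \kappa(\pp')$. Composing with the map from the previous step yields a morphism with source the desired $\tilde{\pi}(\rho^\square) \otimes \kappa(\pp')$, and one that is surjective (resp.\ an isomorphism) if the original was.

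The remaining, and main, step is to identify $\tilde{\pi}(\rho^\square \otimes \kappa(\pp)) \otimes_{\kappa(\pp)} \kappa(\pp')$ with $\tilde{\pi}(\rho^\square \otimes \kappa(\pp'))$. This is where I expect the (mild) obstacle: compatibility of $\tilde{\pi}$ with base change along field extensions in characteristic zero. The base change is co-Whittaker over $\kappa(\pp')$ by Lemma \ref{quotientofcowhittaker_lemma}, and matches the generic local Langlands correspondence for $\rho^\square \otimes \kappa(\pp')$ because the Breuil–Schneider modification (whose smooth dual computes $\tilde{\pi}$ in characteristic zero) is built from the attached Weil–Deligne representation, which is manifestly compatible with scalar extension of the coefficient field. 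The uniqueness of $\tilde{\pi}$ over a field then produces the required isomorphism, completing the argument.
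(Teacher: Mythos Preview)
Your argument is correct and in fact more direct than the paper's. The paper does not pass through $\kappa(\pp)$ at all: instead it first base-changes to $A=(\Rsq/\pp)[[u]]$, then localises at $\pp'$ to land in the discrete valuation ring $(\Rsq/\pp)[[u]]_{\pp'}$, identifies the right-hand side there with $\tilde{\pi}(\rho^\square \otimes (\Rsq/\pp)[[u]]_{\pp'})$ by the uniqueness/torsion-freeness characterisation of $\tilde{\pi}$ over a DVR, and finally reduces modulo the maximal ideal, invoking \cite[Theorem 6.2.5]{eh} after observing that this DVR point is a \emph{minimal lift} of $\rho^\square\otimes\kappa(\pp')$ (the monodromy lives in the subring $\Rsq/\pp$). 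Your route bypasses the DVR and the minimal-lift machinery entirely, trading it for the single ingredient that $\tilde{\pi}$ over characteristic-zero fields commutes with scalar extension; this is indeed immediate from the Breuil--Schneider description (cf.\ the field-case of \cite{eh}), and once granted, your proof works for any $W(k)$-algebra extension of $\Rsq$ with $\pp'$ lying over $\pp$, not just $\Rsq[[u]]$. The paper's longer route has the virtue of only citing statements from \cite{eh} that are used elsewhere in the paper and of staying within the DVR framework the surrounding section is built on, but your argument is the cleaner one.
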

\begin{proof}
We apply $- \otimes A$ to both sides and recognise that $(\Rsq/\pp) \otimes A=A/\pp A$ is canonically identified with $(\Rsq/\pp)[[u]]$ to obtain a map
$$ \tilde{\pi}(\rho^\square) \otimes A/\pp A \to \tilde{\pi}(\rho^\square \otimes \Rsq/\pp) \otimes_{\Rsq/\pp} (\Rsq/\pp)[[u]],$$
still a surjection (resp. isomorphism) by right exactness.\\
The right hand side is $(\Rsq/\pp)[[u]]$-torsionfree since $\tilde{\pi}(\rho^\square \otimes \Rsq/\pp)$ is $(\Rsq/\pp)$-torsionfree by definition, and also co-Whittaker. Now localising at the prime $\pp',$ we obtain a map
\begin{equation}
\label{eq:2}
\tilde{\pi}(\rho^\square) \otimes (A/\pp)_{\pp'} \to \tilde{\pi}(\rho^\square \otimes \Rsq/\pp) \otimes (\Rsq/\pp)[[u]]_{\pp'},
\end{equation}

a surjection (resp. isomorphism) by right exactness. Again, the right hand side remains torsion-free and co-Whittaker. Note that $(\Rsq/\pp)[[u]]_{\pp'}$ is a discrete valuation ring with residue field $\kappa(\pp').$
We can now apply the uniqueness of $$\tilde{\pi}(\rho^\square \otimes (\Rsq/\pp)[[u]]_{\pp'})$$ to show that it is isomorphic to the right hand side since they agree at the generic fibre. 

Moreover, $\rho^\square \otimes (\Rsq/\pp)[[u]]_{\pp'}$ is a minimal lift of $\rho^\square \otimes \kappa(\pp')$ since the nilpotent matrix $N$ attached to $\rho^\square \otimes (\Rsq/\pp)[[u]]_{\pp'}$ has entries lying in the subring $\Rsq/\pp.$ Therefore, by \cite[Theorem 6.2.5]{eh}, there is an isomorphism $\tilde{\pi}(\rho^\square \otimes (\Rsq/\pp)[[u]]_{\pp'}) \otimes_{(\Rsq/\pp)[[u]]_{\pp'}} \kappa(\pp') \cong \tilde{\pi}(\rho^\square \otimes \kappa(\pp') ).$
Passing to the special fibre in \eqref{eq:2} (which is right exact) and using these isomorphisms on the right hand side, we obtain a surjection (resp. isomorphism)
\begin{align*}
\tilde{\pi}(\rho^\square) \otimes \kappa(\pp') \to& \left( \tilde{\pi}(\rho^\square \otimes \Rsq/\pp) \otimes (\Rsq/\pp)[[u]]_{\pp'} \right) \otimes_{A/\pp A} \kappa(\pp')\\ &\cong \tilde{\pi}(\rho^\square \otimes (\Rsq/\pp)[[u]]_{\pp'}) \otimes_{A/\pp A} \kappa(\pp')\\ &\cong \tilde{\pi}(\Rsq \otimes \kappa(\pp'))
\end{align*}
 as desired.
\end{proof}

\begin{defn}
Let $P_1$ be the subset of $P$ consisting of points lying on only one irreducible component.
\end{defn}

\begin{prop}
\label{P1_is_dense_prop}
$P_1$ is Zariski dense in $\Spec A.$ Moreover,
$$P_1 \subseteq \left\{ \pp \in P : \tilde{\pi}(\rho^\square \otimes A) \otimes_A A/\pp \cong \tilde{\pi}(\rho^\square \otimes A/\pp) \right\}.$$
\end{prop}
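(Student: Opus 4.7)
The plan is to address the density claim and the inclusion in turn.

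For density, the irreducible components of $\Spec A = \Spec \Rsq[[u]]$ are in natural bijection with those of $\Spec \Rsq$ via $\mfa \mapsto \mfa A$ (power series rings preserve reducedness and send minimal primes to minimal primes along the flat extension $\Rsq \hookrightarrow A$). Writing $C_1,\ldots,C_m$ for these components, the set $Z := \bigcup_{i \neq j} (C_i \cap C_j)$ is closed, and is proper because $C_i \cap C_j \subsetneq C_i$ for $j \neq i$. Its complement $U := \Spec A \setminus Z$ is therefore dense open. Now $P_1 = P \cap U$ by the very definition of $P_1$, and density of $P_1$ in $\Spec A$ follows from density of $P$ (Lemma \ref{interp_DVRpts_are_dense_in_Ru}): for any non-empty open $V \subseteq \Spec A$, the set $V \cap U$ is non-empty open (as $U$ is dense), and hence meets $P$ (as $P$ is dense), so $P_1 \cap V \neq \emptyset$.

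For the inclusion, let $\pp \in P_1$ with unique minimal prime $\mfa \subseteq \pp$ of $A$. I would adapt the argument of Theorem \ref{comp_finding_a_min_prime} from $\Rsq[p^{-1}]$ to $A[p^{-1}]$: the pseudo-framed deformation framework of section 3, the loci $Z_\mu$, and their description as unions of irreducible components all carry over to $A[p^{-1}]$ with only cosmetic changes, since the $Z_\mu$-loci over $A$ are pulled back from those over $\Rsq$ along the flat projection $\Spec A \to \Spec \Rsq$, and preimages of unions of components under flat maps remain unions of components. This yields that $\rho^\square \otimes A/\mfa$ is a minimal lift of $\rho^\square \otimes \kappa(\pp)$, so \cite[Theorem 6.2.5]{eh} supplies an isomorphism
\[
\tilde{\pi}(\rho^\square \otimes A) \otimes_A \kappa(\pp) \;\cong\; \tilde{\pi}(\rho^\square \otimes \kappa(\pp))
\]
at the generic fibre, using also that $\tilde{\pi}(\rho^\square \otimes A) \otimes A/\mfa \cong \tilde{\pi}(\rho^\square \otimes A/\mfa)$ by the defining uniqueness of $\tilde{\pi}$ at minimal primes of a reduced base. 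Coupling this with the natural surjection $\tilde{\pi}(\rho^\square \otimes A) \otimes A/\pp \twoheadrightarrow \tilde{\pi}(\rho^\square \otimes A/\pp)$ (obtained via the cover property of $W^\square \otimes A/\pp$ through Lemma \ref{quotientofW_lemma} and the rigidity from Lemma \ref{d_princip_crucial_surjection_lemma}) and the uniqueness of torsion-free co-Whittaker modules over the DVR $A/\pp$ with prescribed generic fibre, one deduces that this surjection is an isomorphism.

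The main obstacle is justifying that the Galois-theoretic input of Theorem \ref{comp_finding_a_min_prime} transfers to $A$; once this is settled, the co-Whittaker manipulations follow templates already in place in Theorem \ref{comp_surj_iso_existence_thm} and Lemma \ref{comp_surj_iso_implication_lemma}. A secondary technicality is the passage from an isomorphism of generic fibres over $\kappa(\pp)$ to an isomorphism over the DVR $A/\pp$ itself; this is handled by matching $\ddelta$-functors (both free of rank one) and appealing to the defining torsion-free uniqueness of $\tilde{\pi}(\rho^\square \otimes A/\pp)$.
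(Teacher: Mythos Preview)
Your density argument is correct and a bit more direct than the paper's, which instead pulls back the generically smooth locus $U_0$ of $\Spec\Rsq[p^{-1}]$ along the projection $\phi:\Spec A\to\Spec\Rsq$ and intersects with $P$.

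For the inclusion you take a different route and there is a genuine gap. The paper does not transport the Section~3 machinery to $A$; it stays over $\Rsq$, applying Theorem~\ref{comp_surj_iso_existence_thm} at $\pp_0=\pp\cap\Rsq$ and then Lemma~\ref{comp_surj_iso_implication_lemma} to pass up to $A$. The point of that detour is that inside the proof of Lemma~\ref{comp_surj_iso_implication_lemma} one works with $\tilde{\pi}(\rho^\square\otimes\Rsq/\pp_0)$, which is torsion-free over the \emph{domain} $\Rsq/\pp_0$ by its very definition, and the passage to $(\Rsq/\pp_0)[[u]]$ preserves torsion-freeness by Lemma~\ref{comm_alg_torsionfree_ext_lemma}; one then localises inside this domain.

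Your direct approach over $A$ misses exactly this input. The assertion $\tilde{\pi}(\rho^\square\otimes A)\otimes A/\mfa\cong\tilde{\pi}(\rho^\square\otimes A/\mfa)$ does not follow from ``defining uniqueness at minimal primes'': that uniqueness is only over the fraction field $\kappa(\mfa)$, and promoting it to $A/\mfa$ would require $\tilde{\pi}(\rho^\square\otimes A)\otimes A/\mfa$ to be torsion-free over $A/\mfa$. This fails in general for torsion-free modules over reduced non-domains (e.g.\ $R=k[x,y]/(xy)$, $M=(x,y)$, $\mfa=(x)$: then $y$ kills the nonzero class of $x$ in $M/xM$). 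The same problem recurs at your final step over the DVR $A/\pp$: an isomorphism on the rank-one derivatives $\ddelta[-]$ does not force the kernel of your surjection to vanish, because the co-Whittaker condition constrains cosocles rather than submodules, so a submodule $K$ with $\ddelta[K]=0$ is not ruled out. What is actually needed is that $\tilde{\pi}(\rho^\square\otimes A)\otimes A/\pp$ be $A/\pp$-torsion-free, and nothing in your outline supplies it.
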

\begin{proof}
It is sufficient to prove that $P_1$ is Zariski dense in $A[p^{-1}]$ since $A$ is flat over $W(k).$\\
$\Rsq[p^{-1}]$ is generically smooth by being a scheme over the spectrum of the perfect field $W(k)[p^{-1}].$ Therefore its set of points lying on a single irreducible component contains an open dense subset $U_0$ (viz. the subscheme of smooth points). 
Write $\phi : \Spec A \to \Spec \Rsq$ for the natural projection morphism.
For any prime $\pp_0 \in U_0,$ we have an isomorphism $\tilde{\pi}(\rho^\square) \otimes \Rsq/\pp_0 \cong \tilde{\pi}(\rho^\square \otimes \Rsq/\pp_0)$ by Theorem \ref{comp_surj_iso_existence_thm}. Then Lemma \ref{comp_surj_iso_implication_lemma} implies $P \cap \phi^{-1}(\pp_0) \subseteq P_1,$ and therefore $P \cap \phi^{-1}(U_0) \subseteq P_1$. Since $\phi$ is open, $\phi^{-1}(U_0)$ is open dense.
Thus any open $U \subseteq \Spec A[p^{-1}]$ intersects $\phi^{-1}(U_0)$ in an open set, which must contain a point of $P$ since $P$ is dense in $\Spec A[p^{-1}]$ by Lemma \ref{interp_DVRpts_are_dense_in_Ru}.
\end{proof}
 This immediately gives a surjection
\begin{equation}
\label{eq:4}
   \tilde{\Pi}_{\overline{\rho}} \to \tilde{\pi}(\rho^\square \otimes A) 
\end{equation}
by Proposition \ref{lower_bound_prop}. We will show this to be an isomorphism later.

\subsection{Torsion-free modules and formal power series}
\begin{lemma}
\label{r_and_s_lemma}
Let $R$ be a reduced Noetherian ring and $r \in R.$ Then there is some $s \in R$ such that $r+s$ is a regular element (i.e. not a zero divisor), but $rs=0.$
\end{lemma}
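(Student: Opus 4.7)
The plan is to exploit the fact that in a reduced Noetherian ring, the set of zero divisors is exactly the union of the (finitely many) minimal primes, so testing regularity reduces to checking membership in each minimal prime. I will split the minimal primes into those containing $r$ and those not containing $r$, and then use prime avoidance to construct an $s$ that lies in the ``complementary'' intersection.

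More concretely, let $\pp_1,\dots,\pp_m$ be the minimal primes of $R$. Since $R$ is reduced Noetherian, $\bigcap_i \pp_i = 0$ and every zero divisor lies in some $\pp_i$ (as the associated primes of $R$ are exactly the minimal primes). Partition the indices into $I=\{i : r \in \pp_i\}$ and $J=\{j : r \notin \pp_j\}$. I will seek $s$ satisfying
\[
s \in \bigcap_{j \in J} \pp_j \quad \text{and} \quad s \notin \pp_i \text{ for every } i \in I.
\]
Granting such an $s$, one checks $rs \in \pp_i$ for every $i$ (in $I$ because $r \in \pp_i$, in $J$ because $s \in \pp_j$), so $rs \in \bigcap_i \pp_i = 0$; and $r+s \notin \pp_i$ for every $i$ (if $i \in I$, then $r \in \pp_i$ but $s \notin \pp_i$; if $j \in J$, then $s \in \pp_j$ but $r \notin \pp_j$). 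Hence $r+s$ avoids all minimal primes, so it is regular.

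It remains to produce $s$. The degenerate cases are easy: if $J = \emptyset$ then $r \in \bigcap_i \pp_i = 0$, so $s = 1$ works; if $I = \emptyset$ then $r$ is already regular and $s=0$ works. Otherwise, both sets are non-empty, and the key observation is that $\bigcap_{j \in J} \pp_j \not\subseteq \pp_i$ for any $i \in I$: if it were contained, primality of $\pp_i$ would force some $\pp_j \subseteq \pp_i$ with $j \in J$ and $i \in I$, contradicting the incomparability of distinct minimal primes. Now prime avoidance, applied to the ideal $\bigcap_{j \in J} \pp_j$ and the finite collection of primes $\{\pp_i : i \in I\}$, yields an element $s$ of $\bigcap_{j \in J} \pp_j$ lying in none of the $\pp_i$ for $i \in I$, as required.

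The argument has no real obstacle; the only mildly delicate point is the appeal to incomparability of minimal primes to verify the hypothesis of prime avoidance, and handling the trivial cases where $r=0$ or $r$ is already regular so that the partition is degenerate.
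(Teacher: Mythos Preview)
Your proof is correct and follows essentially the same approach as the paper: partition the minimal primes by whether they contain $r$, use prime avoidance to find $s$ in the intersection of the primes not containing $r$ while avoiding those that do, and verify $rs=0$ and $r+s$ regular by checking each minimal prime. The only cosmetic difference is that you justify the prime avoidance hypothesis via incomparability of minimal primes, whereas the paper phrases the same point through uniqueness of primary decomposition; your version is arguably cleaner, and you also make the degenerate cases $I=\emptyset$, $J=\emptyset$ explicit.
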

\begin{proof}
Recall that since $R$ is reduced the set of zero divisors in $R$ is $\bigcup_{\mfa} \mfa$ where $\mfa$ runs over minimal primes of $R.$ Fix any $r \in R$ and consider the ideal $\bigcap_{r \not\in \pp \in \mathrm{MinSpec} R} \pp$ where $\mathrm{MinSpec}$ denotes the subset of $\Spec$ consisting of minimal primes.\\
For $\mathfrak{q} \le R$ minimal prime ideal satisfying $r \in \mathfrak{q},$ the following holds.
Note that $$\qq \cap \bigcap_{\mathfrak{q} \not\subseteq \pp \in \mathrm{MinSpec} R} \pp= \bigcap_{\pp \in \mathrm{MinSpec} R} \pp=0$$ by reducedness. On the other hand $\bigcap_{\mathfrak{q} \not\subseteq \pp \in \mathrm{MinSpec} R} \pp \neq 0$ by the uniqueness of primary decomposition for the ideal $0$, as the intersection includes no ideal having $\qq$ as its radical. From $r \in \mathfrak{q}$ we have $\bigcap_{\mathfrak{q} \not\subseteq \pp \in \mathrm{MinSpec} R} \pp \le \bigcap_{r \not\in \pp \in \mathrm{MinSpec} R} \pp,$ and so $\qq$ does not contain $\bigcap_{r \not\in \pp \in \mathrm{MinSpec} R} \pp$.
By this argument for all $\qq$ containing $r,$ and using prime avoidance, the ideal $\bigcap_{r \not\in \pp \in \mathrm{MinSpec} R} \pp$ is not contained in the union of prime ideals $\bigcup_{r \in \qq \in \mathrm{MinSpec} R} \qq.$ We choose
$$s \in \left( \bigcap_{r \not\in \pp \in \mathrm{MinSpec} R} \pp \right) \setminus \left( \bigcup_{r \in \qq \in \mathrm{MinSpec} R} \right).$$
Then we have $rs \in \cap_{\pp \in \mathrm{MinSpec} R} \pp=0$ by reducedness. We also have $r+s \in R \setminus \bigcup_{\mathfrak{q} \in \mathrm{MinSpec} R} \mathfrak{q}$ as each minimal prime contain precisely one of $r$ and $s.$ Therefore $r+s$ is a regular element.
\end{proof}

\begin{lemma}
\label{comm_alg_radical_annihilator_lemma}
Let $R$ be a reduced Noetherian ring and $M$ a torsion-free module over $R.$ Then $\ann_R(m)$ is a radical ideal for all $m \in M,$ or equivalently, $M[r^\infty]=M[r]$ for all $r \in R.$
\end{lemma}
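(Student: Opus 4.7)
The plan is to first dispense with the equivalence of the two formulations, and then prove $M[r^\infty]=M[r]$ by cleverly exploiting Lemma \ref{r_and_s_lemma}.

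For the equivalence: $\ann_R(m)$ being radical for every $m$ is the assertion that $r^n m = 0$ implies $rm = 0$, which is exactly the statement $M[r^\infty] \subseteq M[r]$ (the reverse inclusion is automatic). So the two formulations say the same thing, and it suffices to show $M[r] = M[r^\infty]$ for every $r \in R$.

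The main step is as follows. Suppose $r \in R$ and $m \in M$ satisfy $r^n m = 0$ for some $n \geq 1$; I want to conclude $rm = 0$. Apply Lemma \ref{r_and_s_lemma} to obtain $s \in R$ with $rs = 0$ and $r+s$ a regular element of $R$. The trick is to expand
\[
r(r+s)^{n-1} = \sum_{k=0}^{n-1} \binom{n-1}{k} r^{k+1} s^{n-1-k}
\]
and observe that the relation $rs = 0$ forces every term to vanish except the $k = n-1$ term, giving $r(r+s)^{n-1} = r^n$. Applying this to $m$, we find
\[
(r+s)^{n-1}(rm) = r^n m = 0.
\]
Since $r+s$ is a regular element of $R$, so is its power $(r+s)^{n-1}$; the torsion-freeness of $M$ then forces $rm = 0$, as required.

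The main (and only) obstacle is spotting the correct algebraic identity $r(r+s)^{n-1} = r^n$ under the hypothesis $rs = 0$; once one has Lemma \ref{r_and_s_lemma} at hand, the rest is a direct binomial calculation together with the definition of torsion-freeness. The argument uses reducedness of $R$ only through Lemma \ref{r_and_s_lemma}.
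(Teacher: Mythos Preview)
Your proof is correct and follows essentially the same approach as the paper: both invoke Lemma~\ref{r_and_s_lemma} and exploit that $rs=0$ forces $r$ and $r+s$ to act identically on elements of $rM$. The paper phrases this as ``$rM$ has no $r$-torsion since it has no $(r+s)$-torsion'' (using that $s$ annihilates $rM$), while your binomial identity $r(r+s)^{n-1}=r^n$ is a slightly more compact packaging of the same observation.
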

\begin{proof}
Take a suitable $s$ for $r$ from Lemma \ref{r_and_s_lemma}. We have 
$$(r+s)M = rM \oplus sM$$
by noting that $(r+s)M \le rM + sM$ and that $rM \cap sM=0$ as it is killed by the regular element $r+s.$
On one hand, we have $rM \cong M/M[r].$ On the other hand, $rM$ has no $r$-torsion since it has no $r+s$-torsion since it is $R$-torsion-free by being a submodule of $M.$ This shows $M[r^i]= M[r]$ for all $i \ge 1.$
\end{proof}

\begin{lemma}
\label{comm_alg_torsionfree_ext_lemma}
Let $R$ be a reduced Noetherian ring, and $M$ an $R$-torsionfree module. Then $M[[u]]$ is $R[[u]]$-torsionfree.
\end{lemma}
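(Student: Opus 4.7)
The plan is to reduce the torsion-freeness question to the case where the base ring is a field, by localising $R$ at its minimal primes, and then to invoke the fact that power series with coefficients in a vector space over a field are torsion-free (via the standard lowest-order-coefficient argument). Let $\mathrm{MinSpec}(R)$ denote the finite set of minimal primes of $R$; since $R$ is reduced we have $\bigcap_{\pp \in \mathrm{MinSpec}(R)} \pp = 0$, and each $R_\pp$ is the residue field $\kappa(\pp)$.

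The first key step is to establish the injection
$$M \hookrightarrow \prod_{\pp \in \mathrm{MinSpec}(R)} V_\pp, \qquad V_\pp := M \otimes_R \kappa(\pp).$$
If $m$ lies in the kernel, then $\ann_R(m)$ is not contained in any minimal prime, so by prime avoidance it contains an element outside $\bigcup_\pp \pp$. In a reduced Noetherian ring this union is exactly the set of zero-divisors (since $\mathrm{Ass}(R) = \mathrm{MinSpec}(R)$), hence $\ann_R(m)$ contains a regular element of $R$, and torsion-freeness of $M$ forces $m = 0$. Applying this coefficient by coefficient produces the injection
$$M[[u]] \hookrightarrow \prod_{\pp \in \mathrm{MinSpec}(R)} V_\pp[[u]].$$

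Now suppose $f(u) m(u) = 0$ in $M[[u]]$ with $m(u) \neq 0$; I aim to show $f(u)$ is a zero-divisor in $R[[u]]$, which is the contrapositive of the desired torsion-freeness. Each $V_\pp[[u]]$ is $\kappa(\pp)[[u]]$-torsion-free by the lowest-order-coefficient argument, and by the injection some image $\overline{m}^{\pp}(u) \in V_\pp[[u]]$ is non-zero. The image $\overline{f}^{\pp}(u) \in \kappa(\pp)[[u]]$ must then vanish, i.e., every coefficient of $f(u)$ lies in $\pp$. If $R$ has a unique minimal prime then $\pp = 0$ and $f(u) = 0$, which is trivially a zero-divisor. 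Otherwise, since the minimal primes of $R$ are pairwise incomparable, I can pick $s \in \bigcap_{\qq \neq \pp} \qq$ with $s \notin \pp$; this $s$ is non-zero, and each product $s f_j$ lies in $\pp \cap \bigcap_{\qq \neq \pp} \qq = 0$ by reducedness, so $s f(u) = 0$ in $R[[u]]$.

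The whole argument is essentially bookkeeping with minimal primes, so I do not expect any substantial obstacle. The only genuinely useful input is the injection into the product indexed by $\mathrm{MinSpec}(R)$; everything else reduces to standard facts about reduced Noetherian rings and the trivial torsion-freeness of power series with coefficients in a vector space over a field.
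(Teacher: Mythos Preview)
Your proof is correct and takes a genuinely different route from the paper's. The paper argues directly in $M[[u]]$: it first proves two auxiliary lemmas (for any $r\in R$ there is $s$ with $r+s$ regular and $rs=0$; consequently $M[r^\infty]=M[r]$ for torsion-free $M$), then for a regular $r(u)=\sum r_iu^i$ and nonzero $m(u)=\sum m_ju^j$ it locates the lexicographically first pair $(i,j)$ with $r_im_j\neq 0$ and shows the $u^{i+j}$-coefficient of $r(u)m(u)$ is nonzero using $M[r_i^\infty]=M[r_i]$. Your argument instead exploits the structural fact that a reduced Noetherian ring embeds into the finite product of its residue fields at minimal primes: you push the problem into $V_\pp[[u]]$ over the DVR $\kappa(\pp)[[u]]$, where torsion-freeness is the trivial lowest-degree argument, and then run the contrapositive by exhibiting a nonzero $s\in\bigcap_{\qq\neq\pp}\qq$ annihilating $f(u)$. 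Your approach is cleaner and bypasses the two preparatory lemmas entirely; the paper's approach, by contrast, never leaves $R$ and $M$ and so makes the combinatorics of the coefficients fully explicit. Both ultimately use the same underlying fact about minimal primes in reduced Noetherian rings, just packaged differently.
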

\begin{proof}
Let $r=\sum_{i=0}^\infty r_i u^i$ be any regular element of $R[[u]]$ with each $r_i \in R,$ and let $\sum_{i=0}^\infty r_j u^j$ with each $m_j \in M.$ Since $u$ is a regular element of $R,$ it is sufficient to prove that $rm \neq 0$ if $m_0,r_0 \neq 0.$
We first rule out that $r_i m_j=0$ for all $i,j \in \NN.$
Since $M$ is torsion-free and $m_0 \neq 0$ by assumption, $\ann_R(m_0)$ is an ideal consisting of zero divisors, hence contained in a minimal prime $\mfa$ by prime avoidance (using that in a reduced ring, a zero divisor is contained in a minimal prime).
In particular, if $r_i \in \ann_R(m_0)$ for all $i \in \NN,$ then any element of $\ann_R(\mfa)$ annihilates $r.$ As $\ann_R(-) \neq 0$ for a minimal prime in a Noetherian ring, this would make $r$ a zero divisor.\\
Therefore we can consider the lexicographically first $(i,j) \in \NN^2$ such that $r_i m_j \neq 0.$ Consider the coefficient of $u^{i+j}$ in $rm:$
\begin{equation}
\label{eq:3}
    \left( r_0 m_{i+j} + \dots + r_{i-1} m_{j+1} \right) + r_i m_j + \left( r_{i+1} m_{j-1} + \dots + r_{i+j} m_0 \right) \in M
\end{equation}
The first group of terms is $0$ by our minimality hypothesis on $i.$ The last group of terms all lie in $M[r_i]$ (equal to $M[r_i^\infty]$ by Lemma \ref{comm_alg_radical_annihilator_lemma}) by the minimality hypothesis on $j.$ But $r_i m_j \in M \setminus M[r_i^\infty]$ as $m_j \not\in M[r_i^\infty] = M[r_i]$ by assumption. Therefore the expression \eqref{eq:3} is non-zero, and thus $rm \neq 0.$
\end{proof}

\textbf{Remark.} It is not possible to remove the all assumptions on $R$ in this lemma, as shown by the following example. Consider the ring $R=k[[T,T^{1/p},T^{1/p^2},\dots]]/(T)$ and the $R$-module $M=k$ on which $T$ acts by $0.$ Then $M$ is torsion-free over $R$ (as each element of $R$ is either invertible or a zero divisor) but $M[[u]]$ is not torsion-free (or even faithful) over $R[[u]]$ since it is annihilated by the element $\sum_{i=1}^\infty T^{1/p^i} u^i$ which is not a zero divisor in $R[[u]].$

\begin{lemma}
\label{comp_sorminta_lemma}
$$\tilde{\pi}(\rho^\square \otimes A) \cong \tilde{\pi}(\rho^\square) \otimes A.$$
\end{lemma}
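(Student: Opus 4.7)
The plan is to invoke the uniqueness characterization of $\tilde{\pi}(\rho^\square \otimes A)$ as the unique $A$-torsion-free co-Whittaker $A[G_\ell]$-module whose localization at each minimal prime $\qq$ of $A$ is isomorphic to $\tilde{\pi}(\rho^\square \otimes \Frac(A/\qq))$. Setting $N := \tilde{\pi}(\rho^\square)$, I will verify that $N \otimes A$ satisfies these properties and use the surjection from Theorem \ref{coeff_change_thm} to conclude. Since $A = \Rsq[[u]]$ is a flat, reduced, Noetherian, complete local $W(k)$-algebra, Theorem \ref{coeff_change_thm} applies to the inclusion $\Rsq \hookrightarrow A$ and yields a natural surjection $\sigma : N \otimes A \twoheadrightarrow \tilde{\pi}(\rho^\square \otimes A)$. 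Co-Whittakerness of $N \otimes A$ over $A$ is then immediate from Lemma \ref{quotientofcowhittaker_lemma}.

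For $A$-torsion-freeness of $N \otimes A$, I exploit admissibility: for each compact open subgroup $K \le G_\ell$, the $K$-fixed subspace $N^K$ is finitely generated over the Noetherian ring $\Rsq$ and $\Rsq$-torsion-free as a submodule of $N$. Finite generation yields the standard identification $N^K \otimes A \cong N^K[[u]]$, which is $A$-torsion-free by Lemma \ref{comm_alg_torsionfree_ext_lemma}. Since tensor product commutes with the filtered colimit $N = \varinjlim_K N^K$, we have $N \otimes A = \varinjlim_K N^K[[u]]$, a filtered colimit of $A$-torsion-free modules along injective transition maps (injectivity follows from flatness of $A$ over $\Rsq$), and hence is itself $A$-torsion-free.

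It remains to check that $\sigma$ becomes an isomorphism at each generic fiber. For $\Rsq$ reduced and Noetherian, the minimal primes of $A$ are precisely $\qq = \mfa A$ for $\mfa$ a minimal prime of $\Rsq$, with residue field $F_\qq := \Frac(A/\qq) = \Frac((\Rsq/\mfa)[[u]])$. The defining property of $\tilde{\pi}(\rho^\square)$ yields $N \otimes \Frac(\Rsq/\mfa) \cong \tilde{\pi}(\rho^\square \otimes \Frac(\Rsq/\mfa))$; combining this with the compatibility of the field-valued Langlands correspondence under field extensions (applied to $\Frac(\Rsq/\mfa) \hookrightarrow F_\qq$) gives
$$N \otimes F_\qq \cong \tilde{\pi}(\rho^\square \otimes \Frac(\Rsq/\mfa)) \otimes_{\Frac(\Rsq/\mfa)} F_\qq \cong \tilde{\pi}(\rho^\square \otimes F_\qq),$$
which agrees with $\tilde{\pi}(\rho^\square \otimes A) \otimes_A F_\qq$ by the analogous defining property on the $A$-side. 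Thus $\ker \sigma$ vanishes after $\otimes_A F_\qq$ for every minimal prime $\qq$; since $\ker \sigma$ is a submodule of the $A$-torsion-free module $N \otimes A$, it embeds into its localization at the total ring of fractions $\prod_\qq F_\qq$, where it vanishes factor by factor. Hence $\ker \sigma = 0$ and $\sigma$ is an isomorphism.

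The principal technical point I anticipate is the $A$-torsion-freeness of $N \otimes A$, which requires the reduction to finitely generated $\Rsq$-submodules via admissibility before Lemma \ref{comm_alg_torsionfree_ext_lemma} can be invoked; the remaining steps are largely formal manipulations of the uniqueness characterization of $\tilde{\pi}$.
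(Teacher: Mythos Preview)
Your argument is correct and rests on the same core ingredients as the paper's proof: co-Whittakerness via Lemma~\ref{quotientofcowhittaker_lemma}, torsion-freeness via Lemma~\ref{comm_alg_torsionfree_ext_lemma}, and the uniqueness characterisation of $\tilde{\pi}$. Two points of comparison are worth recording. First, your reduction to the finitely generated submodules $N^K$ via admissibility is a genuine clarification: Lemma~\ref{comm_alg_torsionfree_ext_lemma} is stated for $M[[u]]$, whereas what is needed is torsion-freeness of $M \otimes_{\Rsq} \Rsq[[u]]$, and these differ when $M$ is not finitely generated; the paper invokes the lemma directly without comment, so you are supplying the missing bridge. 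Second, your detour through Theorem~\ref{coeff_change_thm} to produce an explicit surjection $\sigma$, followed by the generic-fibre computation to kill $\ker\sigma$, is correct but unnecessary: once $N \otimes A$ is known to be torsion-free, co-Whittaker, and to have the correct localisation at each minimal prime of $A$, the uniqueness property of $\tilde{\pi}$ already forces the isomorphism outright, which is how the paper concludes in a single line.
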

\begin{proof}
It is sufficient to show that $\tilde{\pi}(\rho^\square) \otimes A$ is co-Whittaker and torsion-free, in which case the uniqueness property of $\tilde{\pi}$ forces an isomorphism, as they agree on all irreducible components. The co-Whittaker property is preserved by change of coefficients by Lemma \ref{quotientofcowhittaker_lemma}, and it is torsion-free by Lemma \ref{comm_alg_torsionfree_ext_lemma}.
\end{proof}

\subsection{The isomorphism}
We now show the surjection
$$\tilde{\Pi}_{\overline{\rho}} \to \tilde{\pi}(\rho^\square \otimes A),$$
shown to exist in \eqref{eq:4}, to be an isomorphism by constructing a surjection in the opposite direction.

For each $x \in P$ we have a surjection
$$W^\square \otimes_A \kappa(x) \to \tilde{\pi}(\rho^\square \otimes A) \otimes_A \kappa(x) \to \tilde{\pi}(\rho^\square \otimes \kappa(x))$$
where the second map exists by Corollary \ref{comp_surj_iso_existence_thm} and Lemma \ref{comp_surj_iso_implication_lemma}.

This gives rise to a surjection $$\gamma_x : \tilde{\pi}(\rho^\square \otimes A) \to \tilde{\pi}(\rho^\square \otimes \kappa^+(x))$$ as in Proposition \ref{interp_surjection_to_pirhox_prop}. We now have maps
$$W^\square \to \tilde{\pi}(\rho^\square \otimes A) \xrightarrow{\gamma} \prod_{x \in P} \tilde{\pi}(\rho^\square \otimes \kappa^+(x)).$$

Using the uniqueness in Proposition \ref{interp_surjection_to_pirhox_prop}, we have 
$$\ker \left( W^\square \to \tilde{\pi}(\rho^\square \otimes A) \xrightarrow{\gamma_x} \tilde{\pi}(\rho^\square \otimes \kappa^+(x)) \right) = \ker \beta_x$$
for our $\beta_x$ chosen earlier. Considering images inside the product, we obtain a surjection
$$\tilde{\pi}(\rho^\square \otimes A) \to \tilde{\Pi}_{\overline{\rho}}.$$
We wish to prove that this is an isomorphism with its inverse given by \eqref{eq:4} up to an element of $A^\times.$ Indeed, their composition is a surjective endomorphism of the co-Whittaker $A[G]$-module $\tilde{\pi}(\rho^\square \otimes A),$ which belongs to $A^\times$ by \cite[Proposition 6.2]{helm2016whittaker}, and is therefore an isomorphism.
\begin{thm}
\label{equality_thm}
There are isomorphisms
$$\tilde{\pi}(\rho^\square \otimes A) \xrightarrow{\sim} \tilde{\Pi}_{\overline{\rho}}$$
and 
$$\tilde{\pi}(\rho^\square) \xrightarrow{\sim} \tilde{\Pi}_{\overline{\rho}}/u \tilde{\Pi}_{\overline{\rho}}.$$
\end{thm}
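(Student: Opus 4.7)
The first isomorphism has essentially been set up in the discussion just preceding the theorem statement, so my plan is to assemble those pieces cleanly and then derive the quotient-by-$u$ version.

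First, I would establish the isomorphism $\tilde{\pi}(\rho^\square \otimes A) \xrightarrow{\sim} \tilde{\Pi}_{\overline{\rho}}$ by exhibiting surjections in both directions and arguing that their composition is an isomorphism. One direction is already in hand via \eqref{eq:4}, which gives a surjection $\tilde{\Pi}_{\overline{\rho}} \twoheadrightarrow \tilde{\pi}(\rho^\square \otimes A)$ from Proposition \ref{lower_bound_prop} using $P_1 \subseteq P_{1,S}$ from Proposition \ref{P1_is_dense_prop}. For the reverse direction, I would use the surjections $\gamma_x : \tilde{\pi}(\rho^\square \otimes A) \twoheadrightarrow \tilde{\pi}(\rho^\square \otimes \kappa^+(x))$ constructed via Corollary \ref{comp_surj_iso_existence_thm}, Lemma \ref{comp_surj_iso_implication_lemma} and Proposition \ref{interp_surjection_to_pirhox_prop}, combined into a map into $\prod_{x\in P} \tilde{\pi}(\rho^\square \otimes \kappa^+(x))$. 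The crucial observation, which uses the uniqueness up to a unit in Proposition \ref{interp_surjection_to_pirhox_prop}, is that the composite $W^\square \to \tilde{\pi}(\rho^\square \otimes A) \xrightarrow{\gamma_x} \tilde{\pi}(\rho^\square \otimes \kappa^+(x))$ has the same kernel as $\beta_x$, so the image of $\tilde{\pi}(\rho^\square \otimes A)$ in the product is precisely $\tilde{\Pi}_{\overline{\rho}}$, yielding a surjection $\tilde{\pi}(\rho^\square \otimes A) \twoheadrightarrow \tilde{\Pi}_{\overline{\rho}}$.

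Next, I would compose the two surjections to get a surjective endomorphism of $\tilde{\pi}(\rho^\square \otimes A)$, which is a co-Whittaker $A[G_\ell]$-module by Lemma \ref{comp_sorminta_lemma} (alternatively, by Proposition \ref{interp_bigpi_is_coWh_prop} combined with the first surjection). By Lemma \ref{d_princip_cyclic_lemma} applied with $M = N = \tilde{\pi}(\rho^\square \otimes A)$, together with the fact that an endomorphism inducing multiplication by a non-unit on $\ddelta[-]$ cannot be surjective (as the target is $\mm_A$-adically separated), any surjective endomorphism is multiplication by an element of $A^\times$. Hence both of the original surjections are isomorphisms.

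For the second isomorphism, my plan is to reduce mod $u$. Applying $-\otimes_A A/(u)$ to the first isomorphism gives
\[
\tilde{\pi}(\rho^\square \otimes A) \otimes_A A/(u) \xrightarrow{\sim} \tilde{\Pi}_{\overline{\rho}}/u\tilde{\Pi}_{\overline{\rho}}.
\]
By Lemma \ref{comp_sorminta_lemma}, the left-hand side is canonically isomorphic to $(\tilde{\pi}(\rho^\square) \otimes_{\Rsq} A) \otimes_A A/(u) \cong \tilde{\pi}(\rho^\square)$, giving the desired isomorphism. The step that required the most care in the build-up is the reverse-direction surjection together with the uniqueness argument needed to identify the image with $\tilde{\Pi}_{\overline{\rho}}$; the rest of the proof is essentially formal once Lemma \ref{comp_sorminta_lemma}, Lemma \ref{d_princip_cyclic_lemma}, and Proposition \ref{interp_surjection_to_pirhox_prop} are in hand.
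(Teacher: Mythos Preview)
Your proposal is correct and follows essentially the same route as the paper: the paper's proof of the first isomorphism is exactly the two-surjections-plus-surjective-endomorphism argument you describe (the paper cites \cite[Proposition 6.2]{helm2016whittaker} directly where you invoke Lemma \ref{d_princip_cyclic_lemma}, which amounts to the same thing), and the second isomorphism is deduced from the first via Lemma \ref{comp_sorminta_lemma} just as you do.
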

\begin{proof}
The first isomorphism results from the immediately preceding discussion. The second isomorphism then follows from the first and Lemma \ref{comp_sorminta_lemma}.
\end{proof}

We obtain the following corollaries.
\begin{cor}
$\tilde{\Pi}_{\overline{\rho}}$ is torsion-free over $A.$
\end{cor}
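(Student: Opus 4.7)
The plan is to derive this directly from Theorem \ref{equality_thm}. The first isomorphism of that theorem identifies $\tilde{\Pi}_{\overline{\rho}}$ with $\tilde{\pi}(\rho^\square \otimes A)$, so the task reduces to checking that the latter is $A$-torsion-free. Torsion-freeness is built into the definition of $\tilde{\pi}$: the Emerton-Helm construction produces a torsion-free co-Whittaker module whenever the coefficient ring is a Noetherian flat reduced complete local $W(k)$-algebra. I would therefore verify that $A = \Rsq[[u]]$ lies in that class. Noetherianity and the complete local structure are immediate. Flatness over $W(k)$ follows by composing the flatness of $\Rsq$ over $W(k)$ (recorded in the introduction) with the flatness of $\Rsq[[u]]$ over $\Rsq$. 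Reducedness of $\Rsq[[u]]$ follows from reducedness of $\Rsq$ by the standard lowest-coefficient argument: if $f = \sum a_i u^i$ is nilpotent then its lowest non-zero coefficient is nilpotent in $\Rsq$ and hence zero, contradicting the choice.

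A slightly different route, which avoids re-invoking the defining property of $\tilde{\pi}$, is to quote Lemma \ref{comp_sorminta_lemma} — whose proof already applies Lemma \ref{comm_alg_torsionfree_ext_lemma} to see that $\tilde{\pi}(\rho^\square) \otimes A$ is torsion-free over $A$ — and then chain with the isomorphism $\tilde{\Pi}_{\overline{\rho}} \cong \tilde{\pi}(\rho^\square \otimes A) \cong \tilde{\pi}(\rho^\square) \otimes A$. Either way, no substantive obstacle remains; the work has been front-loaded into Theorem \ref{equality_thm} and the commutative algebra of Section 4.2, so the corollary is essentially a one-line consequence.
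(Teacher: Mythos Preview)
Your proposal is correct and matches the paper's intended argument: the corollary is stated without proof immediately after Theorem \ref{equality_thm}, so the implied reasoning is exactly your first route---identify $\tilde{\Pi}_{\overline{\rho}}$ with $\tilde{\pi}(\rho^\square \otimes A)$ via that theorem and invoke the torsion-freeness built into the definition of $\tilde{\pi}$. Your alternative via Lemma \ref{comp_sorminta_lemma} is also fine and arguably closer in spirit to how the paper has already packaged the torsion-freeness check.
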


\begin{cor}
\label{bigPi_embeds_over_P1_cor}
Recalling the definition of $\tilde{\Pi}_{\overline{\rho}}$  as a submodule of a product over $P$, the map induced by the natural projection $\tilde{\Pi}_{\overline{\rho}} \to \prod_{\qq \in P_1} \tilde{\pi}(\rho^\square \otimes \kappa^+(\qq))$ is injective.
\end{cor}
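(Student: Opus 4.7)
The plan is to reduce the claim to the injection produced by Lemma \ref{Wsq_pi_injects_to_product_lemma} with $S = A$, using the identification $\tilde{\Pi}_{\overline{\rho}} \cong \tilde{\pi}(\rho^\square \otimes A)$ supplied by Theorem \ref{equality_thm}. The hypotheses of that Lemma are met for this choice of $S$: by Proposition \ref{P1_is_dense_prop}, the set $P_1$ is Zariski dense in $\Spec A$, and for each $\qq \in P_1$ the canonical surjection $\tilde{\pi}(\rho^\square \otimes A) \otimes_A A/\pp_\qq \to \tilde{\pi}(\rho^\square \otimes \kappa^+(\qq))$ is already an isomorphism. We therefore obtain an embedding
\[
\tilde{\pi}(\rho^\square \otimes A) \hookrightarrow \prod_{\qq \in P_1} \tilde{\pi}(\rho^\square \otimes \kappa^+(\qq)).
\]

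What remains is to verify that, under the isomorphism of Theorem \ref{equality_thm}, this embedding agrees with the projection map in the corollary's statement up to a unit on each component. Both maps, when composed with the projection to the $\qq$-th factor, give surjections from the co-Whittaker module $\tilde{\Pi}_{\overline{\rho}}$ onto $\tilde{\pi}(\rho^\square \otimes \kappa^+(\qq))$, and any two such surjections differ by an element of $A^\times$ by Lemma \ref{d_princip_cyclic_lemma}. Since rescaling each coordinate by a unit preserves injectivity, the projection map in the corollary's statement is itself injective. The only mild subtlety in the argument is this final factor-by-factor comparison of surjections.
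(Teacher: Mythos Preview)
Your proof is correct and follows essentially the same route as the paper's, which simply cites Theorem \ref{equality_thm} and Lemma \ref{Wsq_pi_injects_to_product_lemma}. Your additional factor-by-factor comparison via Lemma \ref{d_princip_cyclic_lemma} spells out a point the paper leaves implicit, namely that the injection produced by Lemma \ref{Wsq_pi_injects_to_product_lemma} coincides (up to units on each coordinate) with the specific projection map named in the statement; this is a legitimate clarification rather than a different approach.
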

\begin{proof}
This is immediate from Theorem \ref{equality_thm} and Lemma \ref{Wsq_pi_injects_to_product_lemma}.
\end{proof}

We conclude with the proof of Theorem \ref{evenstronger_thm}.
\begin{proof}
Theorem \ref{other_main_result_thm} and our assumption together give a surjection in one direction, so it is enough to find a surjection from $\tilde{\Pi}^{\mathrm{DVR}}(\rho^\square \otimes \Rsq/\pp) \otimes \kappa(\pp)$ to $\tilde{\pi}(\rho^\square \otimes \kappa(\pp)).$  Let $\pp' \in P$ be a prime lying over $\pp.$ We then have $\kappa(\pp)=\kappa(\pp').$ 
Let $S \in \mathrm{QDVR}_k$ be any proper quotient of $\kappa^+(\pp).$ Then we have a composite map
$$\Rsq/\pp \hookrightarrow \kappa^+(\pp') \to S,$$
and $\rho^\square \otimes \kappa^+(\pp')$ is a lift of $\rho^\square \otimes \Rsq/\pp \otimes S.$
Recalling the definition of $\tilde{\Pi}^{\mathrm{DVR},u}(\rho^\square \otimes \Rsq/\pp)$ as a universal co-Whittaker cover, it must admit a surjection 
$$\tilde{\Pi}^{\mathrm{DVR},u}(\rho^\square \otimes \Rsq/\pp) \to \tilde{\pi}(\rho^\square \otimes \kappa^+(\pp'))$$ because it admits a surjection to all $\tilde{\pi}(\rho^\square \otimes \kappa^+(\pp')) \otimes_{\kappa^+(\pp)} S.$\\
The above surjection must factor through $\tilde{\Pi}^{\mathrm{DVR},u}(\rho^\square \otimes \Rsq/\pp) \otimes_{\Rsq[[u]]} \kappa^+(\pp')$ as its target is a $\kappa^+(\pp')$-module.
Localising both, we obtain a surjection
$$\tilde{\Pi}^{\mathrm{DVR},u}(\rho^\square \otimes \Rsq/\pp) \otimes_{\Rsq[[u]]} \kappa(\pp') \to \tilde{\pi}(\rho^\square \otimes \kappa(\pp')).$$
Noting that the map
$$\tilde{\Pi}^{\mathrm{DVR}}(\rho^\square \otimes \Rsq/\pp) \otimes_{\Rsq} \kappa(\pp) \to \tilde{\Pi}^{\mathrm{DVR},u}(\rho^\square \otimes \Rsq/\pp) \otimes_{\Rsq[[u]]} \kappa(\pp')$$
is an isomorphism (as $\kappa(\pp)=\kappa(\pp')$), the last surjection becomes
$$\tilde{\Pi}^{\mathrm{DVR}}(\rho^\square \otimes \Rsq/\pp) \otimes_{\Rsq} \kappa(\pp) \to \tilde{\pi}(\rho^\square \otimes \kappa(\pp)).$$
\end{proof}



\bibliography{semicontinuity}

\end{document}